\documentclass[11pt]{article}
\ifdefined\pdfinfoomitdate\pdfinfoomitdate=1\fi
\ifdefined\pdftrailerid\pdftrailerid{}\fi

\usepackage[a4paper,margin=0.82in]{geometry}
\usepackage{amsmath,amssymb,amsthm,mathtools}
\usepackage{enumitem}
\usepackage{titlesec}

\titlespacing*{\section}{0pt}{1.6ex plus .4ex minus .2ex}{0.8ex}
\titlespacing*{\subsection}{0pt}{1.2ex plus .3ex minus .2ex}{0.6ex}

\setlist{nosep,leftmargin=*}
\usepackage[T1]{fontenc}
\usepackage{lmodern}
\usepackage{microtype}
\usepackage[hidelinks]{hyperref}
\hypersetup{
	pdftitle={Residue Sums Modulo Products of Two Distinct Odd Primes},
	pdfauthor={Haobo Ma, Mircea Merca, Wenlin Zhang},
	pdfsubject={Number Theory (math.NT)},
	pdfkeywords={residue sums, cyclic subgroups, Dirichlet characters, math.NT}
}

\newtheorem{theorem}{Theorem}[section]
\newtheorem{proposition}[theorem]{Proposition}
\newtheorem{lemma}[theorem]{Lemma}
\newtheorem{corollary}[theorem]{Corollary}
\theoremstyle{remark}
\newtheorem{remark}[theorem]{Remark}
\newtheorem{example}[theorem]{Example}

\newcommand{\ord}{\operatorname{ord}}
\newcommand{\eps}{\varepsilon}
\newcommand{\vp}{\nu_2}
\newcommand{\F}{\mathbb{F}}
\newcommand{\Z}{\mathbb{Z}}
\newcommand{\Ree}{\operatorname{Re}}
\newcommand{\lcm}{\operatorname{lcm}}
\newcommand{\Res}[2]{\langle #1\rangle_{#2}}

\title{Residue Sums Modulo Products of Two Distinct Odd Primes}
\author{
    Haobo Ma\thanks{ChronoAI Pte Ltd, Singapore, and The Omega Institute, auric@aelf.io}
    \and
    Mircea Merca\thanks{Corresponding author. Department of Mathematical Methods and Models, Fundamental Sciences Applied in Engineering Research Center, National University of Science and Technology Politehnica Bucharest, RO-060042 Bucharest and Academy of Romanian Scientists, RO-050044 Bucharest, Romania, mircea.merca@upb.ro}
    \and
    Wenlin Zhang\thanks{National University of Singapore (NUS), Singapore, and The Omega Institute, e1327962@u.nus.edu}
}
\date{}

\begin{document}
	
	\maketitle
	
	\begin{abstract}
Let $S_m(a)$ denote the sum of the least positive residues in one
multiplicative period of $a$ modulo $m$, of length $k$.
We study the equality $S_m(a)=mk/2$ for $m=pq$, where $p,q$ are
distinct odd primes and $k$ is even. Complementary pairing settles
subgroups containing $-1$; the remaining cases depend on their embedding
in the two local unit groups. At index four, we exclude mixed cyclic
subgroups with biquadratic annihilator and classify every eligible
imprimitive quartic kernel using two odd integer moments at its prime
conductor. For each fixed conductor, the proportion of admissible second
primes giving equality is $1/4$ or $1/2$; full coset balance has proportion
$1/4$. For primitive quartic characters, classical triangular
character-sum identities yield explicit short-sum criteria, and the
relative class-number formula gives an obstruction to a centered coset.
At index six, we give structural counts and construct an infinite family
modulo $19q$ in which nonzero sextic and quadratic moments cancel.
These applications use a standard odd-character expansion, which also
recovers the known index-two balance classification and determines the
sign of its deviation. A reproducible census for $pq\le10000$ counts
$75\,090$ eligible cyclic subgroups and identifies $101$ mixed index-four
subgroups whose own sum is centered although some coset sums are not.
\end{abstract}
	
	\noindent\textit{2020 Mathematics Subject Classification.}
	Primary 11A07; Secondary 11A63, 11L10, 11R29.

\noindent\textit{Keywords.} Residue sums; cyclic subgroups; Dirichlet
characters; digit expansions; relative class numbers.
	
	\section{Introduction}
	
	When does multiplicative symmetry alone make a sum of least positive
	residues exactly centered? For positive integers $a,m$ with $m>1$ and $(a,m)=1$,
	put $k=\ord_m(a)$ and
	\[
	S_m(a)=\sum_{i=1}^{k}(a^i\bmod m),
	\]
	where each residue belongs to $\{1,\dots,m-1\}$. We study the identity
	\begin{equation}\label{eq:main}
	S_m(a)=\frac{mk}{2}.
	\end{equation}
	The powers in this sum run once through the cyclic subgroup
	$H=\langle a\rangle$. Thus the question concerns the subgroup itself,
	and the identity is independent of its chosen generator. We call $H$
	\emph{sum-balanced} when \eqref{eq:main} holds.
	
	If $k$ is even and $-1\in H$, every residue is paired with its
	complement to $m$, and equality follows. For a prime modulus, even
	order forces this pairing. A composite modulus admits other elements
	of order two, so even order need not suffice. Merca
	\cite[Section~4, Conjecture~1]{Merca2011} conjectured \eqref{eq:main}
	under the additional hypotheses $(a-1,m)=1$ and $k$ even. The example
	$(a,m)=(2,15)$ gives
	\[
	k=4,\qquad S_{15}(2)=2+4+8+1=15\ne30.
	\]
This counterexample has a Lean~4 formalization. A separate formalization
proves Merca's lifted residue-sum Conjecture~2: for a prime $p$, a positive
integer $a$ coprime to $p$, and even $k=\ord_p(a)$,
\[
 \sum_{i=1}^{k}\bigl((2a^i+p)\bmod 2p\bigr)=pk.
\]
The latter is a lifted identity, distinct from \eqref{eq:main}.\footnote{
The source files for
\href{https://github.com/the-omega-institute/trureturing/blob/434cd5ce5a33609c7d2ecd2d450931460a8d1858/D5/S0/Certificates/MercaResidueSumEvenOrderRefutation.lean}{the counterexample}
and
\href{https://github.com/the-omega-institute/trureturing/blob/434cd5ce5a33609c7d2ecd2d450931460a8d1858/D5/S3/Arith/Congruence/MercaLiftedResidueSumPrimeOrder.lean}{the prime-modulus lifted identity}
are pinned to the source revision recorded in
\nolinkurl{data-package-v12/PROVENANCE.md}.  That provenance file gives
the exact declaration names and statements read at the immutable revision;
the package is prepared for the public repository
\url{https://github.com/the-omega-institute/residue-sums-pq-data}.
These two statements are the scope of the cited formalizations.}
A standalone census program and the complete subgroup data accompany
this manuscript as supplementary files.
	The structural results below use $(a,pq)=1$ and even order, and do not
	require $(a-1,pq)=1$.
	
	For $m=pq$, with $p,q$ distinct odd primes, the half-order element is
	$(-1,-1)$, $(-1,1)$ or $(1,-1)$ under the Chinese remainder theorem.
	The last two are the \emph{mixed} cases. They contain both sum-balanced
	and unbalanced subgroups, even with the same local orders and the same
	half-order element: modulo $85$, the subgroups generated by $2$ and
	$42$ have order $8$, local orders $(4,8)$, and half-order element $16$,
	but their sums are $170$ and $340$, respectively.

	\textbf{Relation to earlier work.}
	Pomerance and Ulmer \cite{PomeranceUlmer} call a subgroup
	\emph{balanced} if every one of its cosets has equally many
	representatives in the lower and upper halves of the modulus.
	Their Theorem~2.1 expresses this property through the separate
	vanishing of the half-interval sums of all odd characters trivial
	on the subgroup, while their Proposition~2.4 supplies a
	nonvanishing criterion for individual character sums. For odd
	moduli, their balance property is also equivalent to requiring
	a centered first moment on \emph{every} coset, as explained after
	Theorem~\ref{thm:character-criterion}. Our sum-balanced condition
	requires a centered first moment only on $H$ itself. Thus it is
	weaker: the odd-character moments may cancel without vanishing
	separately. The character expansion in
	Theorem~\ref{thm:character-criterion} follows from standard character
	orthogonality and serves as a framework for the arithmetic results
	below. At index two the two notions coincide in the mixed case;
	Theorem~\ref{thm:index2} specializes
	\cite[Theorem~2.1 and Proposition~2.4]{PomeranceUlmer}, while our
	class-number proof also determines the strict sign of the deviation by
    the classical imaginary quadratic class-number formula.
	
	Engberg \cite{Engberg2014} subsequently investigated the distribution,
	as the modulus varies, of balanced cyclic subgroups generated by a fixed
	integer, proving a conjecture of Pomerance and Ulmer about the role of
	two special families. His balance condition concerns all cosets and
	equality of their lower- and upper-half cardinalities. Our question is
	different: we restrict to moduli $pq$, vary the cyclic subgroup rather
	than fixing a generator, and require a centered first moment only on
	$H$ itself. Accordingly, our finite census does not assert an
	asymptotic frequency for fixed generators.
	
	\textbf{Contributions of this paper.}
	Our principal results concern the structure and arithmetic of
	sum-balanced \emph{cyclic} subgroups modulo a product of two distinct
	odd primes. More specifically:
	\begin{enumerate}[label=\textup{(\roman*)}]
		\item We describe the CRT linkage of cyclic subgroups with fixed
		local projections (Lemma~\ref{lem:linkage}) and show that the local
		orders and even the half-order element need not determine the
		residue sum, as the example modulo $85$ demonstrates.
		\item At index four, we prove that a mixed cyclic subgroup with a
		biquadratic annihilator always has a strict deficit
		(Proposition~\ref{prop:V4}), thereby excluding equality in this
		case.
        \item For imprimitive quartic annihilators, we classify all eligible
        cyclic kernels using two integer moments at the prime conductor
        (Theorem~\ref{thm:all-imprimitive-quartic}), and determine the
        fixed-conductor proportions of sum-balanced and coset-balanced
        lifts. The conductor-$13$ family gives explicit congruence classes
        (Corollary~\ref{thm:imprimitive13family}).
        \item For primitive quartic annihilators, we specialize the classical
        triangular Bernoulli identity to the residue deviation
        (Corollary~\ref{cor:primitive-quartic-divisibility}) and obtain
        short-sum criteria at $p=5$ and $p=13$ (Corollaries~\ref{cor:p5}
        and \ref{cor:p13}). The classical relative class-number formula
        gives a necessary square condition for a centered coset
        (Proposition~\ref{prop:class-number-obstruction}).
        \item At index six, we give an explicit structural specialization
        of rank-two subgroup counting, including the exact split according
        to the diagonal involution (Theorem~\ref{thm:index6-structure}),
        and characterize the local sextic characters with cyclic kernel
        (Proposition~\ref{prop:index6-character-cyclicity}). An infinite
        family modulo $19q$ exhibits cancellation of nonzero quadratic
        and sextic moments (Theorem~\ref{thm:sextic19family}); the primitive
        example modulo $35$ gives a separate instance of this phenomenon.
		\item We carry out a reproducible census of cyclic subgroups for
		$pq\le 10000$, classified by index and local character type.
		In particular, $101$ mixed sum-balanced subgroups of index four
		fail the stronger Pomerance--Ulmer balance condition; see
		Table~\ref{tab:coset-counts} and Example~\ref{ex:coset85} for a
		complete coset-sum check.
	\end{enumerate}
	
	The residue sum also has a direct interpretation in positional
	expansions. Let $a\ge2$, and let $c_1,\dots,c_k$ be one period of the
	base-$a$ expansion of $1/m$. If $r_0=1$ and
	$r_j=a^j\bmod m$, long division gives
	$ar_{j-1}=mc_j+r_j$, with $r_k=r_0$. Summing over the period yields
	\begin{equation}\label{eq:digit-link}
	c_1+\cdots+c_k=\frac{a-1}{m}S_m(a).
	\end{equation}
	Thus \eqref{eq:main} says that the mean digit is $(a-1)/2$.
	When $a^{k/2}\equiv-1\pmod m$, corresponding digits in the two
	half-periods add to $a-1$, the base-$a$ complementarity in Midy's
	theorem; see Lewittes \cite[Theorem~2(iv), Theorem~6 and its proof]{Lewittes}.
Girstmair's prime-denominator formulas \cite{Girstmair} connect digit
sums with class-number factors. Pujahari and Saikia \cite{PujahariSaikia}
study digit averages for products of two distinct primes and prime
powers, expressing them through $(a-1)/2$ and traces of generalized
Bernoulli numbers $B_{1,\chi}$ for odd characters. Thus the
composite-denominator Bernoulli framework of \eqref{eq:digit-link}
and Theorem~\ref{thm:character-criterion} has direct precedents.
Here we use this framework to decide vanishing and cancellation for
cyclic subgroups of specified small index, with the explicit arithmetic
classifications and families listed above.

Girstmair's broader treatment \cite{Girstmair1997} works with
$(\Z/m\Z)^\times/H$ for general $m$ and relates residue-distribution
vectors, their lengths and signs to class-number factors. His recent
survey \cite[Sections~2--3 and 6]{Girstmair2026} develops the geometry of
deviations for power residues, especially sixth-power residues.
Our class-number obstruction is an application of this classical
connection to centered first moments; the modulus being composite
alone does not distinguish that connection from the earlier work.

	The partial sums that arise in the quartic reductions belong to the
	classical study of the distribution of quadratic residues over
	subintervals. The published literature includes Berndt's study of classical theorems
	on quadratic residues \cite{Berndt} and Berndt and Chowla's work on
	zero sums of the Legendre symbol \cite{BerndtChowla}. We use these classical partial sums to test the specified quartic
    kernels; our $p=5$ criterion selects intervals ending at
	$q/5$ or $2q/5$ according to $q\bmod5$. The examples $q=17$ and $q=37$
	show that $q\bmod20$ alone does not determine the answer.
	
	The paper proceeds from the subgroup formulation and the general
	character expansion to the index-two classification, a single section
	on index four, and the index-six decomposition. We then give the
	reproducible census. Counting subgroups removes the weight
	$\varphi(|H|)$ attached to their generators, and separating the local
	quartic types identifies which reductions account for the observed
	equalities.
	
	\section{The subgroup formulation}
	
	For $x\in(\Z/m\Z)^\times$, let $\Res{x}{m}\in\{1,\dots,m-1\}$ denote
	its least positive representative.
	
	\begin{lemma}\label{lem:subgroup-sum}
		Let $H=\langle a\rangle$ and $k=|H|=\ord_m(a)$.  Then
		\[
		S_m(a)=\sum_{h\in H}\Res{h}{m}.
		\]
		In particular, $S_m(a)$ depends only on $H$ and not on the chosen
		generator of $H$.
	\end{lemma}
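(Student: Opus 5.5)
The plan is to show that the map $i\mapsto a^i \bmod m$ is a bijection from $\{1,\dots,k\}$ onto $H$, and then rewrite the sum defining $S_m(a)$ as a sum over $H$.

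First, the least positive residue $a^i\bmod m$ is exactly $\Res{a^i}{m}$, where $a^i$ is viewed as an element of $(\Z/m\Z)^\times$. It lies in $\{1,\dots,m-1\}$ because $(a,m)=1$. So
\[
S_m(a)=\sum_{i=1}^{k}\Res{a^i}{m}.
\]

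Next, since $k=\ord_m(a)$, we have $H=\langle a\rangle=\{a^1,a^2,\dots,a^k\}$, using $a^k=1$ to include the identity. These $k$ powers are pairwise distinct: if $a^i=a^j$ with $1\le i<j\le k$, then $a^{j-i}=1$ with $0<j-i<k$, contradicting minimality of the order. Hence $i\mapsto a^i$ is a bijection $\{1,\dots,k\}\to H$. Reindexing the sum along this bijection gives $S_m(a)=\sum_{h\in H}\Res{h}{m}$.

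For the final claim, the right-hand side depends only on the set $H$. Any other generator $b$ of $H$ has $\ord_m(b)=|H|=k$ and gives the same sum, so $S_m(b)=S_m(a)$.

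There is no real obstacle here. The only point needing care is that the range $i=1,\dots,k$ hits each element of $H$ exactly once, including the identity as $a^k$. That is precisely the distinctness argument above.
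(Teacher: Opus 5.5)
Your proposal is correct and matches the paper's proof, which simply observes that the residues $a,a^2,\dots,a^k$ run through the elements of $H$ exactly once. You additionally spell out why these $k$ powers are distinct and why they include the identity.
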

	
	\begin{proof}
		The residues $a,a^2,\dots,a^k$ run through the elements of $H$
		exactly once.
	\end{proof}
	
	\begin{proposition}[Complementary pairing]\label{prop:minus-one}
		Assume that $k$ is even.  If $-1\in H$, equivalently
		\(a^{k/2}\equiv-1\pmod m\),
		then
		\[
		S_m(a)=\frac{mk}{2}.
		\]
	\end{proposition}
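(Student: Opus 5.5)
The plan is to reduce to the subgroup form given by Lemma~\ref{lem:subgroup-sum}, so that $S_m(a)=\sum_{h\in H}\Res{h}{m}$, and then to pair each element of $H$ with its negative. First I will settle the stated equivalence. The group $H$ is cyclic of even order $k$, so it has a unique element of order two, namely $a^{k/2}$. The residue $-1$ has order two modulo $m$ as soon as $m>2$; the case $m=2$ is excluded because $(\Z/2\Z)^\times$ is trivial and $k$ would then be odd. Hence $-1\in H$ holds exactly when $-1=a^{k/2}$, that is, when $a^{k/2}\equiv-1\pmod m$.

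Assuming $-1\in H$, the map $h\mapsto -h$ is a bijection of $H$ onto itself. Reindexing the sum by this bijection gives
\[
S_m(a)=\sum_{h\in H}\Res{-h}{m}.
\]
For every unit $h$ we have $\Res{-h}{m}=m-\Res{h}{m}$. This holds because $\Res{h}{m}\in\{1,\dots,m-1\}$, so $m-\Res{h}{m}$ also lies in $\{1,\dots,m-1\}$ and is congruent to $-h$. Adding the two expressions for $S_m(a)$ therefore gives $2S_m(a)=\sum_{h\in H}m=mk$, which is the claimed identity $S_m(a)=mk/2$.

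I expect no real obstacle here. The only point that needs care is the equivalence of $-1\in H$ with $a^{k/2}\equiv-1$, and this rests on the uniqueness of the element of order two in a cyclic group of even order. An alternative to the doubling argument would be to note that the involution $h\mapsto -h$ has no fixed points in $H$, since $h\equiv-h$ would force $m\mid 2h$, which is impossible for a unit $h$ when $m$ is odd. Using it would force $m$ odd; $m=2$ being excluded above, an even $m\ge4$ could still have $m/2$ odd, making it a unit fixed by negation. The doubling argument avoids any fixed-point analysis, so that is the route I will take.
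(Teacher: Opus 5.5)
Your argument is correct and is essentially the paper's complementary-pairing proof. The paper splits $H$ into $k/2$ pairs $\{h,-h\}$ whose residues sum to $m$, while you reindex by $h\mapsto -h$ and add; both rest on $\Res{-h}{m}=m-\Res{h}{m}$, and your derivation of $-1\in H\iff a^{k/2}\equiv-1$ from the uniqueness of the involution in a cyclic group of even order fills in a step the paper leaves implicit.

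Your closing aside is wrong, though. If $h$ is a unit and $m\mid 2h$, then $m\mid 2$, so $m/2$ is never a unit once $m\ge4$ (e.g.\ $\gcd(3,6)=3$). Hence the fixed-point-free pairing the paper uses is valid for every $m>2$, not only for odd $m$.
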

	
	\begin{proof}
		The involution $h\mapsto-h$ has no fixed points on $H$.  Moreover,
		\[
		\Res{h}{m}+\Res{-h}{m}=m.
		\]
		Hence the $k$ elements of $H$ split into $k/2$ complementary pairs.
	\end{proof}
	
	We now specialize to $m=pq$.  Put
	\(u=\ord_p(a)\), $v=\ord_q(a)$, $k=\lcm(u,v)$.

	\begin{proposition}[The half-order sign]\label{prop:2adic}
		Assume that $k$ is even and put
		$\alpha=\vp(u)$, $\beta=\vp(v)$.
		Then:
		\begin{enumerate}[label=\textup{(\roman*)}]
			\item if $\alpha=\beta\ge1$, then
			\[
			a^{k/2}\equiv(-1,-1)\pmod{(p,q)};
			\]
			\item if $\alpha>\beta$, then
			\[
			a^{k/2}\equiv(-1,1)\pmod{(p,q)};
			\]
			\item if $\beta>\alpha$, then
			\[
			a^{k/2}\equiv(1,-1)\pmod{(p,q)}.
			\]
		\end{enumerate}
		Thus the mixed cases occur exactly when
		\[
		\vp(\ord_p(a))\ne\vp(\ord_q(a)).
		\]
	\end{proposition}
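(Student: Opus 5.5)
The plan is to work locally at $p$ and at $q$ through the Chinese remainder theorem, computing $a^{k/2}\bmod p$ and $a^{k/2}\bmod q$ separately. The basic tool is an elementary fact about a prime $\ell$, an element $b$ of order $d$ modulo $\ell$, and an integer $e$ with $d\mid 2e$: then $b^{e}\equiv\pm1\pmod \ell$. It is $-1$ exactly when $d\nmid e$, because $(b^e)^2\equiv1$ and the only square roots of $1$ modulo a prime are $\pm1$. Since $u\mid k$ and $v\mid k$, this applies with $e=k/2$ at both primes. So $a^{k/2}\bmod p$ equals $-1$ exactly when $u\nmid k/2$, and similarly at $q$ with $v$.

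Next I translate divisibility into $2$-adic valuations. Write $k=\lcm(u,v)$, so $\vp(k)=\max(\alpha,\beta)$. The odd part of $k$ is the lcm of the odd parts of $u$ and $v$, so the odd part of $u$ always divides that of $k/2$. Hence $u\mid k/2$ if and only if $\alpha\le \vp(k)-1=\max(\alpha,\beta)-1$, which means $\alpha<\max(\alpha,\beta)$, i.e. $\alpha<\beta$. Symmetrically, $v\mid k/2$ if and only if $\beta<\alpha$. The hypothesis that $k$ is even gives $\max(\alpha,\beta)\ge1$.

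The three cases now follow. If $\alpha=\beta\ (\ge1)$, neither $u$ nor $v$ divides $k/2$, so both components are $-1$. If $\alpha>\beta$, then $u\nmid k/2$ but $v\mid k/2$, giving $(-1,1)$. The case $\beta>\alpha$ is symmetric and gives $(1,-1)$. The final sentence of the proposition is then immediate: the mixed components $(-1,1)$ and $(1,-1)$ arise exactly when $\alpha\ne\beta$.

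There is no real obstacle. The only point needing care is the odd-part bookkeeping in the divisibility $u\mid k/2$: one must check that passing from $k$ to $k/2$ lowers only the $2$-adic valuation. It is also worth noting that in case (i) the condition $\alpha\ge1$ is automatic from $k$ even.
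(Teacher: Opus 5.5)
Your proof is correct and takes essentially the same route as the paper. Both compute each CRT component separately and note that $a^{k/2}\equiv1\pmod p$ exactly when $u\mid k/2$, i.e.\ when $\alpha<\nu_2(k)=\max\{\alpha,\beta\}$, and otherwise identify $a^{k/2}$ as $-1$; the paper does this via the unique element of order $2$ in $\langle a\bmod p\rangle$, you via the square roots of $1$ modulo a prime, and your explicit check that halving $k$ leaves its odd part unchanged fills in a step the paper leaves implicit.
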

	
	\begin{proof}
		For the $p$-component, $a^{k/2}\equiv1\pmod p$ if and only if
		$u\mid k/2$.  Since $\vp(k)=\max\{\alpha,\beta\}$, this occurs exactly
		when $\alpha<\vp(k)$.  Otherwise $a^{k/2}$ is the unique element of
		order $2$ in $\langle a\bmod p\rangle$, hence is $-1$.  The argument
		for the $q$-component is identical.
	\end{proof}

	\begin{remark}
		The local orders and the half-order element do not determine whether
		$H$ is sum-balanced. For $m=85=5\cdot17$, the bases $2$ and $42$ satisfy
		\[
		\ord_5(2)=\ord_5(42)=4,\qquad
		\ord_{17}(2)=\ord_{17}(42)=8.
		\]
		Both subgroups therefore have order $k=8$, and
		\[
		2^4\equiv42^4\equiv16\pmod{85},\qquad
		16\equiv(1,-1)\pmod{(5,17)}.
		\]
		Their residue cycles and sums are
		\[
		\begin{array}{c|l|r}
		a&\text{residue cycle}&S_{85}(a)\\ \hline
		2&2,4,8,16,32,64,43,1&170\\
		42&42,64,53,16,77,4,83,1&340.
		\end{array}
		\]
		Since $85\cdot8/2=340$, the first subgroup is unbalanced and the
		second is sum-balanced. Thus the embedding of $H$ into the two local
		unit groups is essential, even with the local orders and mixed sign
		pattern fixed.
	\end{remark}

	Let
	\[
	G=(\Z/pq\Z)^\times,\qquad d=[G:H]=\frac{(p-1)(q-1)}{k}.
	\]
	
	\begin{proposition}\label{prop:index-divisibility}
		One has
		\[
		\gcd(p-1,q-1)\mid d.
		\]
		In particular, $d$ is always even.
	\end{proposition}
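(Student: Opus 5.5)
Since $G\cong(\Z/p\Z)^\times\times(\Z/q\Z)^\times$ by the Chinese remainder theorem and both factors are cyclic, of orders $p-1$ and $q-1$, every element of $G$ has order dividing $\lcm(p-1,q-1)$. In particular $k=|H|=\ord_{pq}(a)=\lcm(u,v)$ divides $\lcm(p-1,q-1)$. The plan is to compare $k$ with this exponent of $G$ and read off the index directly from $|G|=(p-1)(q-1)$.

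Set $g=\gcd(p-1,q-1)$, so that $(p-1)(q-1)=g\cdot\lcm(p-1,q-1)$. Write $\lcm(p-1,q-1)=k\,t$ with $t$ a positive integer; this is possible because $k\mid\lcm(p-1,q-1)$. Then
\[
d=\frac{(p-1)(q-1)}{k}=\frac{g\cdot\lcm(p-1,q-1)}{k}=g\,t,
\]
so $g\mid d$.

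For the parity statement, note that $p$ and $q$ are odd primes. Hence $p-1$ and $q-1$ are both even, so $2\mid g$, and therefore $2\mid d$.

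There is no real obstacle here. The only point needing care is the claim that the order of $a$ modulo $pq$ equals $\lcm(u,v)$ and so divides $\lcm(p-1,q-1)$. This follows from CRT together with Fermat's little theorem in each factor, since $u\mid p-1$ and $v\mid q-1$.
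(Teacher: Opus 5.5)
Your proof is correct and is essentially the paper's argument. Both use $k=\lcm(u,v)\mid\lcm(p-1,q-1)=(p-1)(q-1)/\gcd(p-1,q-1)$, divide to obtain $\gcd(p-1,q-1)\mid d$, and get parity from $p-1$ and $q-1$ both being even.
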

	
	\begin{proof}
		Since
		\[
		k=\lcm(u,v)\mid\lcm(p-1,q-1)
		=\frac{(p-1)(q-1)}{\gcd(p-1,q-1)},
		\]
		the assertion follows after dividing $(p-1)(q-1)$ by $k$.  Since
		$p-1$ and $q-1$ are even, the final statement is immediate.
	\end{proof}
	
	\begin{remark}
		There are no cyclic subgroups of odd index arising as $\langle a\rangle$
		in this problem.  In particular, after index $2$ the next possible
		index is $4$.
	\end{remark}
	
	The following parametrization is the cyclic case of Goursat's
    lemma; compare \cite[Theorem~4.4(a)]{BauerSenZvengrowski}. The
    general rank-two cyclic-subgroup count used in the later census is
    the specialization of Tóth~\cite[Theorems~4.6--4.7]{Toth2014}.
    \begin{lemma}[CRT linkage]\label{lem:linkage}
		Let $A=\langle b\rangle$ and $B=\langle c\rangle$ be cyclic groups of
		orders $u$ and $v$, and put $g=\gcd(u,v)$. The cyclic subgroups of
		$A\times B$ that project onto both factors are exactly
		\[
		H_\lambda
		=\{(b^i,c^j):j\equiv\lambda i\pmod g\},
		\qquad
		\lambda\in(\Z/g\Z)^\times.
		\]
		Each has order $\lcm(u,v)$, and there are $\varphi(g)$ such subgroups.
		For $g=1$ there is one subgroup, namely $A\times B$.
	\end{lemma}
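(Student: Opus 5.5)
We prove Lemma~\ref{lem:linkage} in three steps: first show that each $H_\lambda$ is a cyclic subgroup with full projections, then show that every such subgroup is some $H_\lambda$, and finally show that distinct $\lambda$ give distinct subgroups.

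\textbf{Step 1: each $H_\lambda$ qualifies.} Note that $b^i$ determines $i$ modulo $u$, and $g\mid u$, so the condition $j\equiv\lambda i\pmod g$ is well defined. Hence $H_\lambda$ is a subgroup of $A\times B$. It is the preimage of the graph of the isomorphism $A/A^g\to B/B^g$ given by $b\mapsto c^\lambda$, so it has order $uv/g=\lcm(u,v)$.

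For cyclicity, choose an integer $j_0$ with $j_0\equiv\lambda\pmod g$. The element $(b,c^{j_0})$ lies in $H_\lambda$, and its order is $\lcm(u,\ord(c^{j_0}))$. We want to choose $j_0$ coprime to $v$, so that $\ord(c^{j_0})=v$ and the order is $\lcm(u,v)=|H_\lambda|$. Such a $j_0$ exists because $\gcd(\lambda,g)=1$ and the reduction $(\Z/v\Z)^\times\to(\Z/g\Z)^\times$ is surjective. Therefore $H_\lambda=\langle(b,c^{j_0})\rangle$ is cyclic. Its projections contain $b$ and $c^{j_0}$, which generate $A$ and $B$ respectively, so both projections are onto.

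\textbf{Step 2: every such subgroup is some $H_\lambda$.} Let $H=\langle(b^s,c^t)\rangle$ be cyclic and project onto both factors. Then $b^s$ generates $A$ and $c^t$ generates $B$, so $\gcd(s,u)=\gcd(t,v)=1$. Put $\lambda\equiv t s^{-1}\pmod g$; this is a unit modulo $g$. Every element of $H$ has the form $(b^{sn},c^{tn})$, and $tn\equiv\lambda sn\pmod g$, so $H\subseteq H_\lambda$. Since $H$ is cyclic its order equals the order of its generator, which is $\lcm(u,v)=|H_\lambda|$. Hence $H=H_\lambda$.

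\textbf{Step 3: distinctness and the count.} The subgroup $H_\lambda$ contains $(b,c^{j})$ exactly when $j\equiv\lambda\pmod g$. So $H_\lambda$ determines $\lambda$ modulo $g$, and distinct units give distinct subgroups. This yields exactly $\varphi(g)$ subgroups. When $g=1$ the congruence condition is empty, so $H_\lambda=A\times B$, and this group is cyclic because $\gcd(u,v)=1$.

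The only step that is not routine is the cyclicity of $H_\lambda$ in Step 1, which depends on lifting $\lambda$ to a unit modulo $v$. That lifting is the standard surjectivity of $(\Z/v\Z)^\times\to(\Z/g\Z)^\times$ for $g\mid v$, which follows from the Chinese remainder theorem by choosing the lift prime to the primes of $v$ not dividing $g$. Everything else is order counting.
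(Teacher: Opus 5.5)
Your proof is correct and follows essentially the same route as the paper: the same parameter $\lambda\equiv ts^{-1}\pmod g$ for a generator $(b^s,c^t)$, the same lift of $\lambda$ to a unit modulo $v$ to get $H_\lambda=\langle(b,c^{j_0})\rangle$, and the same distinctness test via elements with first coordinate $b$. The only difference is in identifying $\langle(b^s,c^t)\rangle$ with $H_{ts^{-1}}$: the paper uses the generalized Chinese remainder theorem to get equality directly, whereas you prove inclusion and compare orders, using $|H_\lambda|=uv/g=\lcm(u,v)$ from the graph description.
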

	
	\begin{proof}
		A generator projecting onto both factors has the form $(b^s,c^t)$,
		where $(s,u)=(t,v)=1$. The pair $(b^i,c^j)$ belongs to its cyclic
		subgroup precisely when
		\[
		sn\equiv i\pmod u,\qquad tn\equiv j\pmod v
		\]
		have a common solution. By the generalized Chinese remainder theorem,
		this is equivalent to
		\[
		s^{-1}i\equiv t^{-1}j\pmod g,
		\]
		or $j\equiv ts^{-1}i\pmod g$. Conversely, a unit $\lambda$ modulo
		$g$ lifts to a unit $t$ modulo $v$: choose nonzero residues at the
		primes dividing $v$ but not $g$, and apply the Chinese remainder
		theorem. Then $(b,c^t)$ generates $H_\lambda$ and has order
		$\lcm(u,v)$. Distinct parameters give distinct permitted pairs
		with first exponent $i=1$.
	\end{proof}
	
	In the example modulo $85$, take
	$A=\langle2\bmod5\rangle$ and $B=\langle2\bmod17\rangle$.
	Then $u=4$, $v=8$ and $g=4$, and the two subgroups correspond to
	$\lambda=1$ and $\lambda=3$, respectively, since
	$42\equiv2\pmod5$ and $42\equiv2^3\pmod{17}$.
	Both subgroups have index $8$.
	
	\section{An exact character criterion}
	
	Let $\widehat G$ be the character group of $G$, and let
	\[
	H^\perp=\{\chi\in\widehat G:\chi(h)=1\text{ for all }h\in H\}
	\]
	be the annihilator of $H$.  Thus $|H^\perp|=d$.  We extend each
	character to a Dirichlet character modulo $pq$ by setting it equal to
	$0$ on nonunits, and define
	\[
	A_{pq}(\chi)
	=
	\sum_{\substack{1\le n<pq\\(n,pq)=1}} n\chi(n).
	\]

	We shall also use the standard relation between the first moment of a
	nonprincipal Dirichlet character and its Dirichlet \(L\)-function
	(see, for example, \cite[Chapter~4, pp.~30--46]{Washington}).
	If \(\chi\) is a nonprincipal Dirichlet character modulo \(N\), then
	\[
	L(0,\chi)
	=
	-\frac{1}{N}\sum_{n=1}^{N} n\chi(n),
	\]
	and consequently
	\[
	A_N(\chi):=\sum_{n=1}^{N}n\chi(n)=-N\,L(0,\chi).
	\]

	\begin{lemma}\label{lem:even-char}
		If $\chi$ is nonprincipal and even, then
		\[
		A_{pq}(\chi)=0.
		\]
		For the principal character $\chi_0$,
		\[
		A_{pq}(\chi_0)=\frac{pq\,\varphi(pq)}{2}.
		\]
	\end{lemma}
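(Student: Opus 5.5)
We prove both parts with the reflection $n\mapsto pq-n$, which is a bijection of the set $\{1\le n<pq:(n,pq)=1\}$ onto itself, together with one orthogonality relation.

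\textbf{Even nonprincipal characters.} Let $\chi$ be nonprincipal with $\chi(-1)=1$. Reindexing by $n\mapsto pq-n$ gives
\[
A_{pq}(\chi)=\sum_{n}(pq-n)\chi(pq-n)=\sum_n (pq-n)\chi(-n)=\sum_n (pq-n)\chi(n),
\]
since $\chi(pq-n)=\chi(-n)=\chi(-1)\chi(n)=\chi(n)$. Averaging this with the original expression yields
\[
2A_{pq}(\chi)=pq\sum_{\substack{1\le n<pq\\(n,pq)=1}}\chi(n).
\]
The right-hand side is $pq\sum_{x\in G}\chi(x)$, which vanishes by orthogonality because $\chi$ is nonprincipal on $G$. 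Hence $A_{pq}(\chi)=0$.

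\textbf{Principal character.} For $\chi_0$ the same averaging applies, since $\chi_0$ is even. It gives $2A_{pq}(\chi_0)=pq\cdot|G|=pq\,\varphi(pq)$, which is the claimed value. Equivalently, the units in $[1,pq-1]$ split into $\varphi(pq)/2$ complementary pairs $\{n,pq-n\}$, each summing to $pq$. No pair degenerates, because $pq$ is odd and so $n\ne pq-n$.

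The only point needing care is that the reflection preserves the summation range and the coprimality condition. This holds because $(pq-n,pq)=(n,pq)$ and $1\le pq-n<pq$. The identity $A_N(\chi)=-N\,L(0,\chi)$ recalled above is not needed here.
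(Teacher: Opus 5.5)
Your proof is correct and is essentially the paper's argument. The paper pairs $n$ with $pq-n$, notes $n\chi(n)+(pq-n)\chi(pq-n)=pq\,\chi(n)$, and invokes $\sum_{n\in G}\chi(n)=0$; this is the same content as your reflection-and-average identity $2A_{pq}(\chi)=pq\sum_{n\in G}\chi(n)$, which is marginally cleaner because it never has to choose one representative from each pair.
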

	
	\begin{proof}
		For an even nonprincipal character, pair $n$ with $pq-n$:
		\[
		n\chi(n)+(pq-n)\chi(pq-n)
		=pq\chi(n).
		\]
		Summing over one element from each pair gives zero because
		$\sum_{n\in G}\chi(n)=0$.  The principal case follows from the same
		pairing without the character.
	\end{proof}
	
	\begin{theorem}[Character criterion]\label{thm:character-criterion}
		Let $H=\langle a\rangle$ have index $d$ in $G$.  Then
		\begin{equation}\label{eq:character-criterion}
		S_{pq}(a)-\frac{pq\,k}{2}
		=
		\frac1d
		\sum_{\substack{\chi\in H^\perp\\ \chi(-1)=-1}}
		A_{pq}(\chi).
		\end{equation}
		Consequently,
		\[
		S_{pq}(a)=\frac{pq\,k}{2}
		\]
		if and only if the sum on the right-hand side of
		\eqref{eq:character-criterion} vanishes.
	\end{theorem}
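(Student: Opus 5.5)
The plan is to write the indicator function of $H$ through the characters in its annihilator and then evaluate the resulting first moments with Lemma~\ref{lem:even-char}. The only input is the standard orthogonality relation for the finite abelian group $G/H$, whose dual is $H^\perp$, with $|H^\perp|=d$. For $n\in G$ it reads
\[
\mathbf 1_H(n)=\frac1d\sum_{\chi\in H^\perp}\chi(n).
\]

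The first step is to apply Lemma~\ref{lem:subgroup-sum} and then insert the indicator:
\[
S_{pq}(a)=\sum_{h\in H}\Res{h}{pq}
=\sum_{\substack{1\le n<pq\\(n,pq)=1}} n\,\mathbf 1_H(n)
=\frac1d\sum_{\chi\in H^\perp}A_{pq}(\chi).
\]

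Next we split the characters of $H^\perp$ into three groups and treat each with Lemma~\ref{lem:even-char}.
\begin{itemize}
\item The principal character $\chi_0$ contributes $\frac1d\cdot\frac{pq\,\varphi(pq)}{2}$. Since $\varphi(pq)=dk$, this equals $\frac{pq\,k}{2}$.
\item Every nonprincipal even character in $H^\perp$ contributes $0$.
\item Every character in $H^\perp$ is either even or odd, so the only remaining terms are those with $\chi(-1)=-1$.
\end{itemize}
Subtracting $pqk/2$ from both sides gives \eqref{eq:character-criterion}. The ``consequently'' clause follows at once, because $d\ne0$.

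There is no real obstacle here. The one point to check is the normalization in the orthogonality relation: $H^\perp$ is canonically the character group of $G/H$, so it has exactly $d$ elements, and the sum $\sum_{\chi\in H^\perp}\chi(n)$ equals $d$ if $n\in H$ and $0$ otherwise. It is also worth noting that extending the characters by zero on nonunits matches the definition of $A_{pq}$, which only sums over $n$ coprime to $pq$.
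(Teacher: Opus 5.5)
Your proposal is correct and follows the paper's proof essentially line for line. You use the orthogonality expansion of $\mathbf 1_H$ over $H^\perp$, then Lemma~\ref{lem:even-char} to identify the principal contribution $pqk/2$ via $\varphi(pq)=dk$ and to discard the nonprincipal even characters.
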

	
	\begin{proof}
		Character orthogonality gives
		\[
		1_H(x)=\frac1d\sum_{\chi\in H^\perp}\chi(x).
		\]
		Hence
		\[
		S_{pq}(a)
		=
		\frac1d\sum_{\chi\in H^\perp}A_{pq}(\chi).
		\]
		The principal contribution is
		\[
		\frac1d\cdot\frac{pq\,\varphi(pq)}2
		=\frac{pq\,k}{2},
		\]
		and all nonprincipal even characters vanish by
		Lemma~\ref{lem:even-char}.
	\end{proof}
	
	For comparison with \cite[Theorem~2.1]{PomeranceUlmer}, let $N$ be
	odd, let $\chi$ be a nonprincipal character modulo $N$, and put
	$c_\chi=\sum_{1\le n<N/2}\chi(n)$. Permuting the units by
	$n\mapsto[2n]_N$ gives
	\[
	A_N(\chi)=\chi(2)\bigl(2A_N(\chi)+Nc_\chi\bigr),
	\qquad
	c_\chi=\bigl(\chi(2)^{-1}-2\bigr)\frac{A_N(\chi)}N.
	\]
	The factor $\chi(2)^{-1}-2$ is nonzero. Consequently their
	half-interval coefficient vanishes precisely when the first moment
	vanishes. Applying character orthogonality to the centered sum over
	$gH$, for all $g$, shows that every coset is sum-balanced if and only
	if $A_N(\chi)=0$ for every odd $\chi\in H^\perp$.
	In contrast, \eqref{eq:character-criterion} asks for the sum of those
	moments to vanish. At index two there is one odd contribution in the
	mixed case, so the two balance notions coincide; at indices four and
	six the distinction permits cancellation.
	
	\section{Subgroups of index two}
	
	Assume that $d=2$ and that we are in a mixed case.  Then
	$H^\perp=\{1,\chi\}$,
	where $\chi$ is an odd quadratic character.  Write
	\[
	\eps_p(n)=\left(\frac np\right),\qquad
	\eps_q(n)=\left(\frac nq\right).
	\]
	
	In sums modulo $pq$, the symbols $\eps_p$ and $\eps_q$ denote the
	characters induced to modulus $pq$, extended by zero on all nonunits.
	In $A_p(\eps_p)$ and $A_q(\eps_q)$ they denote the primitive local
	characters at the displayed prime modulus.
	
	We shall use the imaginary quadratic class-number formula
	(see \cite[Chapter~4, pp.~30--46]{Washington}).  If $D$ is a negative fundamental
	discriminant, $\chi_D$ is its primitive quadratic character, $h(D)$
	is the class number and $w(D)$ is the number of roots of unity in
	$\mathbb Q(\sqrt D)$, then
	\begin{equation}\label{eq:quadratic-moment}
	L(0,\chi_D)=\frac{2h(D)}{w(D)}>0,
	\qquad
	A_{|D|}(\chi_D)=-\frac{2|D|h(D)}{w(D)}<0.
	\end{equation}
	The factor $w(D)$ includes the smallest conductor: $w(-3)=6$ and
	$A_3(\eps_3)=1-2=-1$.
	
	\begin{lemma}[Adjoining a prime to the modulus]\label{lem:induced-moment}
		Let $r$ and $\ell$ be distinct primes, let $\chi$ be a primitive
		nonprincipal Dirichlet character modulo $r$, and let $\widetilde\chi$
		be the character induced by $\chi$ modulo $r\ell$. Then
		\[
		L(s,\widetilde\chi)
		=\bigl(1-\chi(\ell)\ell^{-s}\bigr)L(s,\chi),
		\]
		and, with $A_N(\chi)=\sum_{n=1}^{N}n\chi(n)$ as above,
		\[
		A_{r\ell}(\widetilde\chi)
		=\ell\bigl(1-\chi(\ell)\bigr)A_r(\chi).
		\]
	\end{lemma}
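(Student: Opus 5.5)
The $L$-function identity comes from comparing Euler products, and the moment identity then follows by evaluating at $s=0$ with the relation $A_N(\chi)=-N\,L(0,\chi)$ recalled before Lemma~\ref{lem:even-char}.

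\textbf{Euler products.} For $\operatorname{Re}s>1$ we have
\[
L(s,\widetilde\chi)=\prod_{\pi}\bigl(1-\widetilde\chi(\pi)\pi^{-s}\bigr)^{-1}.
\]
For primes $\pi\ne\ell$ one has $\widetilde\chi(\pi)=\chi(\pi)$. This includes $\pi=r$, where both values vanish. At $\pi=\ell$, however, $\widetilde\chi(\ell)=0$, whereas the Euler factor of $L(s,\chi)$ there is $(1-\chi(\ell)\ell^{-s})^{-1}$. Removing that factor gives
\[
L(s,\widetilde\chi)=\bigl(1-\chi(\ell)\ell^{-s}\bigr)L(s,\chi)
\]
for $\operatorname{Re}s>1$. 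Both sides are entire, since $\chi$ and $\widetilde\chi$ are nonprincipal, so the identity extends to all $s$ by analytic continuation.

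\textbf{Evaluation at $s=0$.} Setting $s=0$ gives $L(0,\widetilde\chi)=(1-\chi(\ell))L(0,\chi)$. Applying $A_N=-N\,L(0,\cdot)$ with $N=r\ell$ and with $N=r$ then yields
\[
A_{r\ell}(\widetilde\chi)=-r\ell\,L(0,\widetilde\chi)=\ell\bigl(1-\chi(\ell)\bigr)\bigl(-r\,L(0,\chi)\bigr)=\ell\bigl(1-\chi(\ell)\bigr)A_r(\chi).
\]
The quoted relation holds for any nonprincipal character modulo $N$, primitive or not. It does not matter that the sum defining $A_{r\ell}$ runs to $N$ rather than $N-1$, because $\widetilde\chi(N)=0$.

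\textbf{Main obstacle.} The only delicate point is the validity of $L(0,\chi)=-\frac1N\sum n\chi(n)$ for imprimitive $\widetilde\chi$. If one prefers not to rely on that, there is an elementary route.
\begin{itemize}
\item Write $\sum_{n\le r\ell}n\widetilde\chi(n)$ as the full sum $\sum_{n\le r\ell}n\chi(n)$ minus the terms with $\ell\mid n$.
\item For the full sum, set $n=x+ry$ with $1\le x\le r$ and $0\le y<\ell$. Using $\sum_x\chi(x)=0$, this equals $\ell A_r(\chi)$.
\item The terms with $n=\ell m$, $1\le m\le r$, contribute $\ell\chi(\ell)\sum_{m\le r}m\chi(m)=\ell\chi(\ell)A_r(\chi)$.
\end{itemize}
Subtracting gives $\ell(1-\chi(\ell))A_r(\chi)$ directly. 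I would present this elementary computation as the proof of the second identity, and keep the Euler-product argument for the first.
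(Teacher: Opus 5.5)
Your proof is correct. For the $L$-function identity, and for your first derivation of the moment identity, you follow the paper: removing the Euler factor at $\ell$ is equivalent to the paper's subtraction of the terms $\chi(\ell j)(\ell j)^{-s}$ from the Dirichlet series. The paper likewise continues to $s=0$ and applies $A_N=-N\,L(0,\cdot)$, a relation it takes as valid for every nonprincipal character, imprimitive ones included, so your ``main obstacle'' is already covered there.

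The elementary computation you would actually present for the moment identity is a genuinely different route. It needs only three facts: the periodicity of $\chi$, the vanishing of $\sum_{x=1}^{r}\chi(x)$, and the primality of $\ell$, which guarantees that $\widetilde\chi(n)=\chi(n)$ unless $\ell\mid n$. It uses no analytic continuation and no evaluation of $L(0,\widetilde\chi)$. It is exactly the character-weighted form of the lift-and-remove count that the paper performs coset by coset in Lemma~\ref{lem:coset-lifting}. Summing that lemma against $\chi$ over the cosets of $\ker\chi$ reproduces your identity. With it, the induced moments used in Theorems~\ref{thm:all-imprimitive-quartic} and \ref{thm:sextic19family} rest on no analytic input.

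The paper's route, in turn, is a one-line consequence once the $L$-function identity is established. It also explains the factor $1-\chi(\ell)$ as the deleted Euler factor at $\ell$ evaluated at $s=0$. Neither argument uses the primitivity of $\chi$; that hypothesis matters only when $A_r(\chi)$ is later tied to class numbers.
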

	
	\begin{proof}
		For $\Ree s>1$, the induced character agrees with $\chi$ away from
		multiples of $\ell$ and vanishes on those multiples. Hence
		\[
		\begin{aligned}
		L(s,\widetilde\chi)
		&=L(s,\chi)-\sum_{j=1}^{\infty}\frac{\chi(\ell j)}{(\ell j)^s}\\
		&=\bigl(1-\chi(\ell)\ell^{-s}\bigr)L(s,\chi).
		\end{aligned}
		\]
		Analytic continuation gives the identity at $s=0$. Both characters
		are nonprincipal, so the normalization $A_N(\chi)=-N L(0,\chi)$ gives
		\[
		\begin{aligned}
		A_{r\ell}(\widetilde\chi)
		&=-r\ell\bigl(1-\chi(\ell)\bigr)L(0,\chi)\\
		&=\ell\bigl(1-\chi(\ell)\bigr)A_r(\chi).
		\end{aligned}
		\]
	\end{proof}
	
	The equality classification below specializes
	\cite[Theorem~2.1 and Proposition~2.4]{PomeranceUlmer}.
	Their Proposition~2.4 gives vanishing precisely when the character is
	induced from one prime and takes value $1$ at the other; a primitive
	character of conductor $pq$ cannot give vanishing. We give a
	self-contained class-number proof, which also shows that every failure
	of equality is a strict deficit.
	
	\begin{samepage}
		\begin{theorem}[Complete index-two classification]\label{thm:index2}
			Suppose $[G:H]=2$ and $-1\notin H$.
			\begin{enumerate}[label=\textup{(\roman*)}]
				\item If $\chi=\eps_p$, then
				\[
				S_{pq}(a)=\frac{pq\,k}{2}
				\iff
				\left(\frac qp\right)=1.
				\]
				\item If $\chi=\eps_q$, then
				\[
				S_{pq}(a)=\frac{pq\,k}{2}
				\iff
				\left(\frac pq\right)=1.
				\]
				\item If $\chi=\eps_p\eps_q$, then
				\[
				S_{pq}(a)\ne\frac{pq\,k}{2}.
				\]
			\end{enumerate}
		\end{theorem}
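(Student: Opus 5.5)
By Theorem~\ref{thm:character-criterion}, with $d=2$ and $H^\perp=\{1,\chi\}$, the deviation is a single term,
\[
S_{pq}(a)-\frac{pq\,k}{2}=\frac12\,A_{pq}(\chi),
\]
where $\chi$ is the nontrivial character of $H^\perp$. Since $-1\notin H$, $\chi(-1)=-1$, so $\chi$ is odd. The quadratic characters of $G$ are $\eps_p$, $\eps_q$ and $\eps_p\eps_q$, all extended to modulus $pq$. The plan is to evaluate $A_{pq}(\chi)$ in each case: by Lemma~\ref{lem:induced-moment} for the induced characters, and by the class-number formula \eqref{eq:quadratic-moment} for the primitive one.

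\textbf{Cases (i) and (ii).} If $\chi=\eps_p$, oddness forces $p\equiv3\pmod 4$. Then $\eps_p$ is the primitive character $\chi_{-p}$ of the fundamental discriminant $-p$. Lemma~\ref{lem:induced-moment} with $r=p$ and $\ell=q$ gives
\[
A_{pq}(\eps_p)=q\Bigl(1-\Bigl(\frac qp\Bigr)\Bigr)A_p(\eps_p).
\]
By \eqref{eq:quadratic-moment}, $A_p(\eps_p)=-2p\,h(-p)/w(-p)<0$, and this includes $p=3$, where $A_3(\eps_3)=-1$. Hence $A_{pq}(\eps_p)$ vanishes exactly when $\left(\frac qp\right)=1$. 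Otherwise it equals $2qA_p(\eps_p)<0$, which is a strict deficit. Case (ii) is the same argument with $p$ and $q$ interchanged.

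\textbf{Case (iii).} Here $\chi=\eps_p\eps_q$ is primitive of conductor $pq$. It is odd, so exactly one of $p,q$ is $3\bmod4$, and therefore $pq\equiv3\pmod4$. Thus $\chi$ is the Kronecker character of the negative fundamental discriminant $D=-pq$, using quadratic reciprocity $\eps_p\eps_q=\left(\frac{-pq}{\cdot}\right)$. Then \eqref{eq:quadratic-moment} gives $A_{pq}(\chi)=-2pq\,h(-pq)/w(-pq)<0$, since $h\ge1$. The deviation is therefore strictly negative, and equality fails.

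\textbf{Main obstacle.} The one delicate step is identifying each odd character with $\chi_D$ for a fundamental discriminant. For $\eps_p$ this means checking $-p\equiv1\pmod4$. For the product this means confirming that $\eps_p\eps_q$ is primitive of conductor exactly $pq$ and equals $\chi_{-pq}$. Both checks follow from parity considerations and reciprocity. After that, positivity of class numbers finishes all three cases and also yields the strict sign of the deviation.
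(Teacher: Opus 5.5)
Your proposal is correct and follows the paper's proof essentially step for step. The character criterion reduces the deviation to $\frac12 A_{pq}(\chi)$; Lemma~\ref{lem:induced-moment} together with \eqref{eq:quadratic-moment} at $D=-p$ (or $D=-q$) settles the induced cases; and identifying $\eps_p\eps_q$ with the primitive character of the fundamental discriminant $-pq$ gives the strict deficit in case (iii).
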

	\end{samepage}
	
	\begin{proof}
		By Theorem~\ref{thm:character-criterion},
		\[
		S_{pq}(a)-\frac{pq\,k}{2}
		=\frac12A_{pq}(\chi).
		\]
		If $\chi=\eps_p$, then $\chi$ is induced from the primitive quadratic
		character modulo $p$, and Lemma~\ref{lem:induced-moment} gives
		\[
		A_{pq}(\eps_p)
		=
		q\left(1-\left(\frac qp\right)\right)A_p(\eps_p).
		\]
		Since $\eps_p$ is odd, its fundamental discriminant is $-p$, and
		\eqref{eq:quadratic-moment} gives $A_p(\eps_p)<0$, proving (i).
		Part (ii) is symmetric. In case (iii), the oddness of $\eps_p\eps_q$
		means that exactly one of $p,q$ is congruent to $3$ modulo $4$.
		Hence $pq\equiv3\pmod4$, and $-pq$ is a negative fundamental
		discriminant. The product $\eps_p\eps_q$ is its primitive quadratic
		character.
		Thus \eqref{eq:quadratic-moment} gives
		\[
		A_{pq}(\eps_p\eps_q)=-pq\,L(0,\eps_p\eps_q)<0,
		\]
		and in particular this moment is nonzero.
	\end{proof}
	
	\begin{example}
		For $(p,q,a)=(3,11,5)$ one has $k=10$ and
		\[
		S_{33}(5)=165=\frac{33\cdot10}{2}.
		\]
		The relevant index-two character is $\eps_{11}$, and
		\[
		\left(\frac3{11}\right)=1.
		\]
	\end{example}
	
	\begin{example}
		For $(p,q,a)=(3,5,2)$ one has $k=4$ and
		\[
		S_{15}(2)=15\ne30.
		\]
		Here the relevant quadratic character is the primitive product
		$\eps_3\eps_5$.
	\end{example}
	
	\section{Subgroups of index four}
	
	Assume from now on that $d=4$.  Then
	$H^\perp\simeq C_4$ or $H^\perp\simeq C_2\times C_2$.

	\subsection{The biquadratic annihilator}

	\begin{proposition}\label{prop:V4}
		Suppose
		$H^\perp\simeq C_2\times C_2$.
		Then
		\[
		H^\perp=\{1,\eps_p,\eps_q,\eps_p\eps_q\},
		\qquad
		H=G^2.
		\]
		Since \(H=\langle a\rangle\) is cyclic, necessarily
		\[
		\gcd\left(\frac{p-1}{2},\frac{q-1}{2}\right)=1,
		\]
		equivalently
		\[
		\gcd(p-1,q-1)=2.
		\]
		If \(k\) is even, then exactly one of \(p,q\) is congruent to \(1\)
		modulo \(4\), and in the mixed case
		\[
		S_{pq}(a)<\frac{pq\,k}{2}.
		\]
		In particular, equality is impossible.
	\end{proposition}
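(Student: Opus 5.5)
First I would identify the annihilator. The character group $\widehat G\simeq\widehat{(\Z/p\Z)^\times}\times\widehat{(\Z/q\Z)^\times}$ is a product of two cyclic groups of even order. Each factor therefore has exactly one element of order two, namely $\eps_p$ or $\eps_q$. The elements of order dividing $2$ in $\widehat G$ are exactly $\{1,\eps_p,\eps_q,\eps_p\eps_q\}$, so a subgroup isomorphic to $C_2\times C_2$ must equal this set. Its annihilator in $G$ is the common kernel of the two Legendre symbols, namely $G^2$, since $G^2=(\Z/p\Z)^{\times2}\times(\Z/q\Z)^{\times2}$ under CRT. This gives $H=(H^\perp)^\perp=G^2$. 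The group $G^2$ is isomorphic to $C_{(p-1)/2}\times C_{(q-1)/2}$, which is cyclic if and only if $\gcd(\frac{p-1}2,\frac{q-1}2)=1$. This is equivalent to $\gcd(p-1,q-1)=2$. Alternatively, one can read this off Proposition~\ref{prop:index-divisibility} together with $d=4$.

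Next comes the parity step. Here $k=|G^2|=\frac{p-1}2\cdot\frac{q-1}2$. If both primes were $\equiv3\pmod 4$, then $k$ would be odd. If both were $\equiv1\pmod4$, then $2$ would divide both halves, contradicting coprimality. So $k$ even forces exactly one of $p,q$ to be $\equiv1\pmod4$; by symmetry, take $p\equiv1$ and $q\equiv3\pmod4$. Then $\eps_p$ is even, while $\eps_q$ and $\eps_p\eps_q$ are odd. In particular $-1=(-1,-1)\notin G^2$, because $-1$ is a nonresidue mod $q$. So we are automatically in the mixed case.

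Now apply Theorem~\ref{thm:character-criterion}, which gives
\[
S_{pq}(a)-\frac{pq\,k}{2}=\frac14\Bigl(A_{pq}(\eps_q)+A_{pq}(\eps_p\eps_q)\Bigr).
\]
The second term is handled exactly as in Theorem~\ref{thm:index2}(iii). Since $pq\equiv3\pmod4$, $-pq$ is a fundamental discriminant and $\eps_p\eps_q$ is its primitive character, so \eqref{eq:quadratic-moment} gives $A_{pq}(\eps_p\eps_q)<0$. For the first term, Lemma~\ref{lem:induced-moment} gives
\[
A_{pq}(\eps_q)=p\Bigl(1-\bigl(\tfrac pq\bigr)\Bigr)A_q(\eps_q).
\]
Here $A_q(\eps_q)<0$ by \eqref{eq:quadratic-moment} with $D=-q$, and the factor $1-(\frac pq)$ lies in $\{0,2\}$, so this term is $\le0$. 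The sum is therefore strictly negative, which yields the strict deficit and excludes equality.

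I expect no serious obstacle. The only delicate points are justifying that a $C_2\times C_2$ annihilator must be the full $2$-torsion, and checking that both odd characters carry a nonpositive sign with at least one strictly negative. The key structural observation is that no cancellation is possible: both odd moments are nonpositive, and the primitive one is strictly negative.
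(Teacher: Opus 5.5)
Your proof is correct and follows the paper's argument step for step. You identify $H^\perp$ as the full $2$-torsion of $\widehat G$, read off cyclicity from $G^2\simeq C_{(p-1)/2}\times C_{(q-1)/2}$, and show $A_{pq}(\eps_q)\le0$ and $A_{pq}(\eps_p\eps_q)<0$; you also fill in the ``exactly one prime is $1\bmod4$'' and ``automatically mixed'' steps that the paper leaves implicit. One small slip: the parenthetical alternative via Proposition~\ref{prop:index-divisibility} with $d=4$ only gives $\gcd(p-1,q-1)\mid4$, not $=2$, so excluding $4$ still needs your non-cyclicity argument for $G^2$.
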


	\begin{proof}
		The first statement follows because the full group of quadratic
		characters modulo $pq$ is
		\[
		\{1,\eps_p,\eps_q,\eps_p\eps_q\}.
		\]
		Thus \(H\) is the subgroup of squares. Under CRT,
		\[
		G^2\simeq C_{(p-1)/2}\times C_{(q-1)/2}.
		\]
		Since \(H=G^2\) is cyclic, the two factors must have coprime orders,
		and hence
		\[
		\gcd\left(\frac{p-1}{2},\frac{q-1}{2}\right)=1.
		\]
		Assume, without loss of generality, that
		\[
		p\equiv1\pmod4,\qquad q\equiv3\pmod4.
		\]
		The odd characters in $H^\perp$ are then
		\[
		\eps_q,\qquad \eps_p\eps_q.
		\]
		The character $\eps_q$ is induced from the odd primitive quadratic
		character modulo $q$.  By \eqref{eq:quadratic-moment}, with $D=-q$,
		$A_q(\eps_q)<0$, so Lemma~\ref{lem:induced-moment} gives
		\[
		A_{pq}(\eps_q)
		=
		p\left(1-\left(\frac pq\right)\right)A_q(\eps_q)\le0.
		\]
		Moreover, $\eps_p\eps_q$ is the primitive quadratic character of
		fundamental discriminant $-pq$, so \eqref{eq:quadratic-moment} gives
		\[
		A_{pq}(\eps_p\eps_q)<0.
		\]
		Theorem~\ref{thm:character-criterion} now gives the strict inequality.
	\end{proof}
	
	\begin{example}
		For $(p,q,a)=(5,7,4)$,
		\[
		k=6,\qquad
		S_{35}(4)=4+16+29+11+9+1=70,
		\]
		whereas
		\[
		\frac{35\cdot6}{2}=105.
		\]
	\end{example}
	
	The preceding proposition completely excludes the mixed sum-balanced case
	when the annihilator is biquadratic.  Thus, at index \(4\), the only
	remaining possibility is the cyclic case
	\[
	H^\perp\simeq C_4.
	\]
	This case is genuinely different: the quadratic characters alone no
	longer determine the residue sum, and the contribution of a quartic
	character must be taken into account.
	
	\subsection{The cyclic annihilator}
	
	Suppose now that
	$H^\perp=\langle\psi\rangle\simeq C_4$.
	In a mixed case $-1\notin H$.  Since the image of $-1$ in $G/H$ is
	the unique element of order $2$, one has
	$\psi(-1)=-1$.
	Thus
	$H^\perp=\{1,\psi,\psi^2,\overline{\psi}\}$,
	with $\psi$ and $\overline{\psi}$ odd and $\psi^2$ even.

	The exhaustive computation for \(pq\le 10000\) shows that balanced
	mixed examples do occur at index \(4\).  Since the biquadratic case is
	excluded by Proposition~\ref{prop:V4}, all such examples necessarily
	belong to the cyclic annihilator case.

	\begin{theorem}\label{thm:C4basic}
		In the cyclic-annihilator case,
		\begin{equation}\label{eq:C4deviation}
		S_{pq}(a)-\frac{pq\,k}{2}
		=
		\frac12\Ree A_{pq}(\psi).
		\end{equation}
		Hence
		\[
		S_{pq}(a)=\frac{pq\,k}{2}
		\iff
		\Ree A_{pq}(\psi)=0
		\iff
		\Ree L(0,\psi)=0.
		\]
	\end{theorem}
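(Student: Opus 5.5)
We apply Theorem~\ref{thm:character-criterion} with $d=4$ and $H^\perp=\{1,\psi,\psi^2,\overline{\psi}\}$. As noted just before the statement, in the mixed case the image of $-1$ in $G/H\simeq C_4$ is the unique element of order two. Hence $\psi(-1)=-1$ and $\overline{\psi}(-1)=-1$, while $\psi^2(-1)=1$. So the odd characters in $H^\perp$ are exactly $\psi$ and $\overline{\psi}$, and the even nonprincipal character $\psi^2$ contributes nothing by Lemma~\ref{lem:even-char}. The criterion \eqref{eq:character-criterion} therefore reads
\[
S_{pq}(a)-\frac{pq\,k}{2}=\frac14\bigl(A_{pq}(\psi)+A_{pq}(\overline{\psi})\bigr).
\]

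Next we show that the two odd moments are complex conjugates. Since $A_{pq}(\chi)=\sum n\chi(n)$ has real weights $n$, we get $A_{pq}(\overline{\psi})=\overline{A_{pq}(\psi)}$. The bracket is therefore $2\Ree A_{pq}(\psi)$, which gives \eqref{eq:C4deviation} with the factor $\tfrac12$.

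The first equivalence is then immediate. For the second, $\psi$ is nonprincipal, so we may use the relation $A_{pq}(\psi)=-pq\,L(0,\psi)$ recalled before Lemma~\ref{lem:even-char}. Taking real parts gives $\Ree A_{pq}(\psi)=-pq\,\Ree L(0,\psi)$. Since $pq\neq0$, one real part vanishes exactly when the other does.

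The only point requiring care is the parity bookkeeping: $\psi$ must be odd, and its square must be even. Oddness of $\psi$ holds because a generator of $\widehat{G/H}\simeq C_4$ is faithful on $G/H$ and $-1$ has order two there, so $\psi(-1)=i^2=-1$. Evenness of $\psi^2$ then follows from $\psi^2(-1)=(-1)^2=1$. Everything else is direct substitution into earlier results.
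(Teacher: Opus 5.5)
Your proposal is correct and follows the paper's proof: it applies Theorem~\ref{thm:character-criterion} with the odd characters $\psi,\overline\psi$, uses $A_{pq}(\overline\psi)=\overline{A_{pq}(\psi)}$ to get $\tfrac12\Ree A_{pq}(\psi)$, and converts via $A_{pq}(\psi)=-pq\,L(0,\psi)$. Your parity check, that $\psi$ is odd and $\psi^2$ is even using $-1\notin H$, spells out the step the paper states just before the theorem.
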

	
	\begin{proof}
		By Theorem~\ref{thm:character-criterion},
		\[
		S_{pq}(a)-\frac{pq\,k}{2}
		=
		\frac14\bigl(A_{pq}(\psi)+A_{pq}(\overline\psi)\bigr).
		\]
		Since
		$A_{pq}(\overline\psi)=\overline{A_{pq}(\psi)}$,
		the first assertion follows.  The second uses
		$A_{pq}(\psi)=-pq\,L(0,\psi)$.
	\end{proof}
	
	\subsection{The imprimitive quartic case}
	
\begin{lemma}[Lifting coset sums]\label{lem:coset-lifting}
Let $r,q$ be distinct odd primes, $K\le\F_r^\times$, and $C$ a coset
of $K$. Put $s(C)=\sum_{x\in C}\Res{x}{r}$ and identify
$\widetilde C=C\times\F_q^\times$ by the Chinese remainder theorem.
Then
\begin{equation}\label{eq:coset-lifting}
 \sum_{z\in\widetilde C}\Res{z}{rq}
 -\frac{rq\,|K|(q-1)}2
 =q\bigl(s(C)-s(q^{-1}C)\bigr).
\end{equation}
\end{lemma}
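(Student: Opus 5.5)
The plan is to compute the left-hand side of \eqref{eq:coset-lifting} directly, by enumerating integers in $[1,rq-1]$ through their residue modulo $r$. Characters and Lemma~\ref{lem:induced-moment} are not needed. Under the Chinese remainder theorem, $\widetilde C=C\times\F_q^\times$ is the set of $z\in(\Z/rq\Z)^\times$ with $z\bmod r\in C$. Its least positive representatives are therefore exactly the integers $n$ with $1\le n\le rq-1$, $n\bmod r\in C$ and $q\nmid n$.

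First drop the condition $q\nmid n$. For each $x\in C$, with least representative $\Res{x}{r}\in\{1,\dots,r-1\}$, the integers in $[1,rq-1]$ congruent to $x$ modulo $r$ are $\Res{x}{r}+jr$ for $0\le j\le q-1$. All of these lie in range, since $\Res{x}{r}+(q-1)r<rq$. None is divisible by $r$, because $x\neq0$. Summing over $j$ and then over $x$ gives
\[
\sum_{x\in C}\sum_{j=0}^{q-1}\bigl(\Res{x}{r}+jr\bigr)=q\,s(C)+|K|\,\frac{rq(q-1)}2 .
\]

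Next remove the multiples of $q$. These are $n=qm$ with $1\le m\le r-1$ and $qm\bmod r\in C$, that is, $m\bmod r\in q^{-1}C$. As $m$ runs over $1,\dots,r-1$, it runs over least representatives modulo $r$. So the removed terms sum to $q\sum_{y\in q^{-1}C}\Res{y}{r}=q\,s(q^{-1}C)$. Subtracting, the sum over $\widetilde C$ equals $q\,s(C)-q\,s(q^{-1}C)+rq|K|(q-1)/2$. The last term is exactly the centering term in \eqref{eq:coset-lifting}, which finishes the argument.

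The only step needing care is the bookkeeping of ranges. We must check that each lift $\Res{x}{r}+jr$ lies in $[1,rq-1]$ and is automatically prime to $r$. We must also check that the removed multiples $qm$ correspond bijectively to least residues of the coset $q^{-1}C$, which uses that $q$ is invertible modulo $r$. I expect no genuine obstacle beyond this.
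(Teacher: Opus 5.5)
Your proposal is correct and follows essentially the same route as the paper's proof: sum the $q$ lifts $\Res{x}{r}+jr$ of each $x\in C$ to get $q\,s(C)+r|K|q(q-1)/2$, then subtract the lifts $qm$ divisible by $q$, which are indexed by the least residues of $q^{-1}C$ and contribute $q\,s(q^{-1}C)$. Your explicit range checks ($\Res{x}{r}+(q-1)r<rq$, and $1\le m\le r-1$ for the removed multiples) are the bookkeeping the paper leaves implicit.
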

\begin{proof}
For each representative $x\in C$, its $q$ lifts are
$x,x+r,\ldots,x+(q-1)r$. Their total over $C$ is
$q\,s(C)+r|K|q(q-1)/2$. The lifts divisible by $q$ are precisely
$qy$ with $y\in\{1,\ldots,r-1\}$ representing $q^{-1}C$.
Removing their sum $q\,s(q^{-1}C)$ proves the identity.
\end{proof}

For an imprimitive quartic character induced from $\chi$ modulo $r$,
its kernel is $K\times\F_q^\times$, where $K=\ker\chi$ has order
$t=(r-1)/4$. This kernel is cyclic precisely when $\gcd(t,q-1)=1$.
Since $q-1$ is even, cyclicity forces $t$ odd, hence $r\equiv5\pmod8$.
The eligibility condition that both local projection orders exceed
one excludes $r=5$. The next theorem therefore covers every eligible
cyclic imprimitive quartic kernel, after interchanging the primes
if necessary.

\begin{theorem}\label{thm:all-imprimitive-quartic}
Let $r>5$ be prime with $r\equiv5\pmod8$, put $t=(r-1)/4$, and
choose a primitive root $g$ modulo $r$ and the quartic character
$\chi(g)=i$. Write $K=\ker\chi$, and define
\[
 u=\frac{2s(K)}r-t,\qquad v=\frac{2s(gK)}r-t.
\]
Then $u,v$ are nonzero odd integers. For every odd prime $q\ne r$
with $\gcd(t,q-1)=1$, the subgroup $H=K\times\F_q^\times$ is
cyclic, omits $-1$, has index four and order $t(q-1)$, and every
generator $a$ satisfies $(a-1,rq)=1$. Its sum is centered exactly when
\begin{equation}\label{eq:all-imprimitive-quartic}
 \chi(q)=1,\quad\text{or}\quad
 \bigl(u=v\ \text{and}\ \chi(q)=-i\bigr),\quad\text{or}\quad
 \bigl(u=-v\ \text{and}\ \chi(q)=i\bigr).
\end{equation}
Every coset of $H$ is centered if and only if $\chi(q)=1$.
For fixed $r$, among the primes $q$ satisfying $\gcd(t,q-1)=1$,
the limiting proportion giving a centered subgroup is
\[
 \begin{cases}
  1/4,&|u|\ne|v|,\\
  1/2,&|u|=|v|,
 \end{cases}
\]
and the limiting proportion giving full coset balance is $1/4$.
\end{theorem}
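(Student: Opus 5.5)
The plan is to reduce everything to the four coset sums of $K$ in $\F_r^\times$ via Lemma~\ref{lem:coset-lifting}, and then read off the conditions from the position of $q^{-1}$ among the cosets of $K$.

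\textbf{Structural claims.} Since $r\equiv5\pmod8$, $t=(r-1)/4$ is odd, and $t>1$ because $r>5$. The group $K$ is cyclic of order $t$, so $H=K\times\F_q^\times$ is cyclic exactly when $\gcd(t,q-1)=1$. Its order is $t(q-1)$ and its index is $4$. Write $-1=g^{2t}$; since $2t\equiv2\pmod4$, we get $\chi(-1)=-1$, so $-1\notin K$ and hence $-1\notin H$. A generator $a$ reduces to a generator of $K$ modulo $r$, and this is $\ne1$ since $t>1$. It reduces to a generator of $\F_q^\times$ modulo $q$, and this is $\ne1$ since $q-1\ge2$. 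Hence $(a-1,rq)=1$.

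\textbf{Integrality and parity of $u,v$.} A subgroup of order $t>1$ in $\F_r^\times$ has element sum $\equiv0\pmod r$. So $r\mid s(K)$, and likewise $s(gK)\equiv g\cdot0\pmod r$. Thus $u=2(s(K)/r)-t$ is an integer with $u\equiv t\equiv1\pmod2$; in particular $u\ne0$, and the same holds for $v$.

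\textbf{Deviations of the four cosets.} Since $-1\in g^2K$, complementary pairing gives $s(g^2K)=rt-s(K)$ and $s(g^3K)=rt-s(gK)$. Hence the centered deviations $s(g^jK)-rt/2$ for $j=0,1,2,3$ are
\[
\tfrac r2\,(u,\;v,\;-u,\;-v).
\]
By Lemma~\ref{lem:coset-lifting} applied to $C=K$, the deviation of $H$ is $q\bigl(s(K)-s(q^{-1}K)\bigr)$. We then run through the value of $\chi(q)$:
\begin{itemize}
\item $\chi(q)=1$ gives $q^{-1}K=K$, so the deviation is $0$.
\item $\chi(q)=-i$ gives $q^{-1}K=gK$, so the deviation is $qr(u-v)/2$, which vanishes iff $u=v$.
\item $\chi(q)=-1$ gives $q^{-1}K=g^2K$, so the deviation is $qru\ne0$.
\item $\chi(q)=i$ gives $q^{-1}K=g^3K$, so the deviation is $qr(u+v)/2$, which vanishes iff $u=-v$.
\end{itemize}
This proves \eqref{eq:all-imprimitive-quartic}.

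\textbf{Full coset balance.} The cosets of $H$ are $g^jK\times\F_q^\times$, and Lemma~\ref{lem:coset-lifting} applies to each of them. Multiplication by $q^{-1}$ cyclically shifts the deviation vector $(u,v,-u,-v)$. A shift by $2$ negates it, and that fails because $u\ne0$. A shift by $1$ or $3$ would need both $u=v$ and $v=-u$, which forces $u=0$, impossible. So every coset is centered iff $\chi(q)=1$.

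\textbf{Proportions (the main point needing care).} The condition $\gcd(t,q-1)=1$ says $q\not\equiv1\pmod\ell$ for each prime $\ell\mid t$, which is a union of residue classes modulo $\mathrm{rad}(t)$. The value $\chi(q)$ depends only on $q\bmod r$, and each of its four values is taken on $(r-1)/4$ classes. Since $t\mid r-1$, we have $\gcd(r,t)=1$. So by the Chinese remainder theorem and Dirichlet's theorem on primes in progressions, the two conditions are asymptotically independent, and $\chi(q)$ is equidistributed over $\{1,i,-1,-i\}$ among the admissible $q$. The centered proportion is therefore $1/4+[u=v]/4+[u=-v]/4$. Since $u\ne0$, the conditions $u=v$ and $u=-v$ are mutually exclusive, so the proportion is $1/4$ or $1/2$ according as $|u|\ne|v|$ or $|u|=|v|$. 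Full coset balance has proportion $1/4$.
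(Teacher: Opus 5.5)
Your proof is correct. For the classification and the coset statement it takes a more elementary route than the paper.

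\textbf{The paper's route.} The paper does not invoke Lemma~\ref{lem:coset-lifting} in this proof. It computes the local moment $A_r(\chi)=r(u+iv)$ from the four coset sums. It lifts this by Lemma~\ref{lem:induced-moment} to $A_{rq}(\widetilde\chi)=q(1-\chi(q))\,r(u+iv)$. It then reads the deviation $\frac{rq}{2}\Ree\bigl((1-\chi(q))(u+iv)\bigr)$ off Theorem~\ref{thm:C4basic}, with real factors $0,2u,u+v,u-v$ for $\chi(q)=1,-1,i,-i$. Full coset balance is then the vanishing of that induced odd moment, by character orthogonality.

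\textbf{Your route.} Your direct lifting identity $q\bigl(s(C)-s(q^{-1}C)\bigr)$ gives the same values $0$, $qru$, $qr(u+v)/2$, $qr(u-v)/2$, in the same order. It needs no $L$-functions or analytic continuation. Because it yields every coset deviation at once, full balance reduces to your rotation argument on $(u,v,-u,-v)$. This is exactly how the paper later treats the sextic family (Theorem~\ref{thm:sextic19family}).

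\textbf{What the paper's route buys.} It exhibits the Gaussian-integer moment $r(u+iv)$ and its lift explicitly. That ties the result to the character criterion, and the moment is reused, for example, for the coset $2H$ in the conductor-$13$ family.

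\textbf{Two small points in your proportion step.} First, state that admissible primes exist at all: every prime $\ell\mid t$ is odd, so some reduced class modulo $\ell$ avoids $1$. Without this, a proportion among admissible $q$ is vacuous. Second, a limiting proportion requires the prime number theorem in arithmetic progressions modulo $r\prod_{\ell\mid t}\ell$ (equidistribution over reduced classes). The infinitude form of Dirichlet's theorem alone is not enough.
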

\begin{proof}
The direct-product criterion proves cyclicity. Both projection
orders of a generator, $t$ and $q-1$, exceed one. Since $t$ is odd,
$-1\notin K$, so $-1\notin H$.
For any coset $C$ of the nontrivial subgroup $K$, its elements sum
to zero in $\F_r$: multiplication by any $b\in K\setminus\{1\}$
fixes that sum. Hence $r\mid s(C)$, and $u,v$ are odd integers.
In particular, neither is zero. The four local coset sums are
\[
 s(K),\quad s(gK),\quad rt-s(K),\quad rt-s(gK),
\]
since $-1\in g^2K$. Thus $A_r(\chi)=r(u+iv)$.
Lemma~\ref{lem:induced-moment} and Theorem~\ref{thm:C4basic} give
\[
 S_{rq}(a)-\frac{rq\,t(q-1)}2
 =\frac{rq}{2}\Ree\bigl((1-\chi(q))(u+iv)\bigr).
\]
For $\chi(q)=1,-1,i,-i$, the real factors are respectively
$0,2u,u+v,u-v$. Since $u\ne0$, this proves
\eqref{eq:all-imprimitive-quartic}; in particular the $\chi(q)=-1$
branch never gives equality for an eligible cyclic kernel.
All coset deviations vanish exactly when the induced odd moment
$q(1-\chi(q))r(u+iv)$ vanishes, by character orthogonality.
Since $u+iv\ne0$, this is equivalent to $\chi(q)=1$.

For the proportions, let $R=\prod_{\ell\mid t}\ell$ be the product
of the distinct prime divisors of $t$. Apart from finitely many
primes, admissibility means $q\not\equiv0,1\pmod\ell$ for each
$\ell\mid t$. These conditions are independent of $q\bmod r$,
because $(r,R)=1$. Each quartic value occurs on exactly $(r-1)/4$
nonzero residue classes modulo $r$. The prime number theorem in
arithmetic progressions modulo $rR$ therefore gives equal relative
proportion $1/4$ to the four quartic values among admissible primes.
There is at least one admissible reduced class since every
$\ell\mid t$ is odd. Exactly one value is balanced when $|u|\ne|v|$,
and exactly two when $|u|=|v|$; full coset balance selects one.
\end{proof}

The proportions fix the conductor $r$ and vary the second prime;
they are distinct from a density for fixed generators or for the
bounded two-prime census. The remaining local question is to determine
which primes $r$ satisfy $|u|=|v|$.

	\begin{example}
		Take $(r,\ell,a)=(13,3,29)$.  Then
		\[
		k=\ord_{39}(29)=6
		\]
		and
		\[
		S_{39}(29)=29+22+14+16+35+1=117
		=\frac{39\cdot6}{2}.
		\]
		If the quartic character modulo $13$ is normalized by $\chi(2)=i$,
		then $3\equiv2^4\pmod{13}$, so $\chi(3)=1$, and
		Lemma~\ref{lem:induced-moment} gives $A_{39}(\widetilde\chi)=0$.
	\end{example}

	\begin{corollary}[The conductor-$13$ family]
		\label{thm:imprimitive13family}
		Let $q>3$ be a prime distinct from $13$ with $q\equiv2\pmod3$.
		Let $\chi_{13}$ be the primitive quartic character modulo $13$
		normalized by $\chi_{13}(2)=i$, let $\widetilde\chi_{13}$ be its
		extension by induction to modulus $13q$, and put
		$H=\ker\widetilde\chi_{13}$. Then $H$ is a cyclic subgroup of index
		four, $-1\notin H$, and $k=|H|=3(q-1)$. Every generator $a$ of
		$H$ satisfies $(a-1,13q)=1$, and
		\begin{equation}\label{eq:imprimitive13classification}
		S_{13q}(a)-\frac{13qk}{2}
		=\begin{cases}
		0,&q\bmod13\in\{1,3,7,8,9,11\},\\
		-13q,&q\bmod13\in\{2,4,5,6,10,12\}.
		\end{cases}
		\end{equation}
		In the equality case, all cosets of $H$ are sum-balanced if and only
		if $q\bmod13\in\{1,3,9\}$. If instead
		$q\bmod13\in\{7,8,11\}$, the subgroup $H$ is sum-balanced but
		\begin{equation}\label{eq:imprimitive13coset}
		\sum_{h\in 2H}\Res{h}{13q}=\frac{13qk}{2}-13q.
		\end{equation}
		Consequently, there are infinitely many cyclic subgroups of index
		four which are sum-balanced but not balanced in the sense of
		Pomerance and Ulmer; they arise, for instance, for all primes
		$q\equiv20\pmod{39}$.
	\end{corollary}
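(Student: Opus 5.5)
This is Theorem~\ref{thm:all-imprimitive-quartic} at $r=13$, $t=3$, so the plan is to compute the two moments $u,v$ and read off the cases. With $r=13$ and $g=2$, the admissibility condition $\gcd(3,q-1)=1$ is exactly $q\equiv2\pmod3$. The hypothesis $q>3$ excludes $q=3$, where $\gcd(3,2)=1$ would also hold; the theorem still applies there, but we leave that case aside as the statement does. The theorem then gives directly that $H$ is cyclic, $-1\notin H$, $H$ has index four, $|H|=3(q-1)$, and $(a-1,13q)=1$ for every generator.

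\textbf{Computing the moments.} We have $K=\{1,2^4,2^8\}=\{1,3,9\}$, so $s(K)=13$. Next $gK=\{2,6,5\}$, so $s(gK)=13$. Hence $u=2-3=-1$ and $v=-1$, which gives $u=v$, $|u|=|v|$, and $A_{13}(\chi)=13(-1-i)$.

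\textbf{Reading off the deviation.} Use the displayed formula from the proof of the theorem,
\[
S-\tfrac{13qk}{2}=\tfrac{13q}{2}\Ree\bigl((1-\chi(q))(u+iv)\bigr).
\]
Its real factors are $0,\,2u,\,u+v,\,u-v$ for $\chi(q)=1,-1,i,-i$. With $u=v=-1$ these are $0,-2,-2,0$, so the deviation is $0$ for $\chi(q)\in\{1,-i\}$ and $-13q$ for $\chi(q)\in\{-1,i\}$. It remains to list the residues modulo $13$ by quartic value, using powers of $2$:
\begin{itemize}
\item $\chi=1$ on $K=\{1,3,9\}$;
\item $\chi=i$ on $2K=\{2,6,5\}$;
\item $\chi=-1$ on $4K=\{4,12,10\}$;
\item $\chi=-i$ on $8K=\{8,11,7\}$.
\end{itemize}
This reproduces \eqref{eq:imprimitive13classification}. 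By the theorem, full coset balance holds iff $\chi(q)=1$, that is, $q\bmod13\in\{1,3,9\}$.

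\textbf{The coset $2H$.} For \eqref{eq:imprimitive13coset}, note that $2H$ corresponds to the coset $2K\times\F_q^\times$ under CRT. Apply Lemma~\ref{lem:coset-lifting} with $C=2K$; this gives deviation $13$ times $s(2K)-s(q^{-1}\cdot2K)$. For $q\in8K$ we have $q^{-1}\in8^{-1}K=5K=2K$, since $8\cdot5=40\equiv1$. So $q^{-1}\cdot2K=4K=\{4,12,10\}$ with sum $26$. Therefore the deviation is $q(13-26)=-13q$, as claimed.

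\textbf{Infinite family.} The only real obstacle is checking residue bookkeeping, e.g.\ that $q^{-1}2K$ lands in $4K$. For infinitely many examples, Dirichlet's theorem applied to $q\equiv20\pmod{39}$ gives infinitely many primes with $q\equiv2\pmod3$ and $q\equiv7\pmod{13}$. These lie in the sum-balanced but not coset-balanced case.
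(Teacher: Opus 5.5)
Your proof is correct and follows the paper's route: specialize Theorem~\ref{thm:all-imprimitive-quartic} at $r=13$, $g=2$, where $s(K)=s(2K)=13$ gives $u=v=-1$, read off the sign table on the cosets $K,2K,4K,8K$, and apply Dirichlet's theorem to $q\equiv20\pmod{39}$. The only variation is the deviation of $2H$: you take it directly from Lemma~\ref{lem:coset-lifting} via $q^{-1}\cdot 2K=4K$, whereas the paper computes the induced moment $A_{13q}(\widetilde\chi_{13})=-26iq$ and takes $\frac12\Ree\bigl((-i)A_{13q}(\widetilde\chi_{13})\bigr)$. Also, where you say the lemma gives ``$13$ times'' the difference, the factor should be $q$, as your final line $q(13-26)=-13q$ correctly uses.
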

	
\begin{proof}
Here $K=\{1,3,9\}$, and the four local cosets on which $\chi_{13}$
takes the values $1,i,-1,-i$ are respectively
\[
 \{1,3,9\},\quad\{2,5,6\},\quad\{4,10,12\},\quad\{7,8,11\}.
\]
Their sums are $13,13,26,26$, so $u=v=-1$ in
Theorem~\ref{thm:all-imprimitive-quartic}. Its deviation formula
proves the classification. For $\chi_{13}(q)=-i$, the induced moment
is $-26iq$, and the coset $2H$ has deviation
$\frac12\Ree((-i)(-26iq))=-13q$.
The progression $q\equiv20\pmod{39}$ is reduced and has
$q\equiv2\pmod3$ and $q\equiv7\pmod{13}$, so Dirichlet's theorem
proves infinitude.
\end{proof}
	
	\medskip
	\noindent\textit{A complementary progression with full coset balance.}
	For every prime $q\equiv14\pmod{39}$, one has
	$q\equiv2\pmod3$ and $q\equiv1\pmod{13}$. Hence
	$\chi_{13}(q)=1$, and Lemma~\ref{lem:induced-moment} gives
	$A_{13q}(\widetilde\chi_{13})=0$. It follows that not only
	$H=\ker\widetilde\chi_{13}$ but every one of its cosets is
	sum-balanced. Since $\gcd(14,39)=1$, Dirichlet's theorem yields
	infinitely many primes in this progression. For the smallest one,
	$q=53$, the element $a=3$ generates $H$ modulo $689$, with
	$k=156$, and
	\[
	S_{689}(3)=53742=\frac{689\cdot156}{2}.
	\]
	This contrasts with the progression $q\equiv20\pmod{39}$,
	where $H$ is sum-balanced but not all its cosets are balanced.
	
	\begin{example}\label{ex:imprimitive13family}
		For $q=59$ and $a=42$, the element $a$ generates
		$H=\ker\widetilde\chi_{13}$ modulo $767$ and has order $k=174$.
		Direct summation yields
		\[
		S_{767}(42)=66729=\frac{767\cdot174}{2},
		\qquad
		\sum_{h\in2H}\Res{h}{767}=65962=66729-767.
		\]
		Thus the subgroup is sum-balanced, but its coset $2H$ is not.
	\end{example}

    Two smaller witnesses make the congruence classes concrete. For
    $q=5$ one may take $a=3$ modulo $65$; the four coset sums are
    $(325,390,455,390)$, so $H$ is below the center. For $q=11$,
    $N=143$ and $a=29$, the sums are
    $(2145,2002,2145,2288)=(2145,2145-143,2145,2145+143)$.
    These values agree with the sign table above and are independent
    checks of the noncentered cases.
	
	\subsection{CRT reduction for primitive quartic characters}
	
	Suppose that $\psi$ is primitive modulo $pq$.  Under CRT it can be
	written
	$\psi=\chi_p\chi_q$,
	where both local characters are nonprincipal and their orders are one
	of
	$(4,2)$, $(2,4)$, $(4,4)$.
	
	The local type \((2,4)\) is obtained from the type \((4,2)\) by
	interchanging \(p\) and \(q\).  Thus \((4,2)\) represents the first
	two types; the type \((4,4)\) is treated in
	Section~\ref{sec:primitive44}.
	
	For a nonprincipal character $\chi$ modulo a prime $q$, define
	\[
	C_\chi(t)=\sum_{1\le n<t}\chi(n),
	\qquad 0\le t\le q.
	\]
	
	\begin{lemma}[CRT reduction]\label{lem:CRT}
		Let $\chi_p$ and $\chi_q$ be nonprincipal characters modulo the
		distinct primes $p$ and $q$, respectively.  Write $[z]_q$ for the
		least nonnegative residue of $z$ modulo $q$, and put
		$d_x=[xp^{-1}]_q\in\{0,1,\dots,q-1\}$.
		Then
		\begin{equation}\label{eq:CRTformula}
		A_{pq}(\chi_p\chi_q)
		=
		pq\,\chi_q(p)
		\sum_{x=1}^{p-1}
		\chi_p(x)\,C_{\chi_q}(d_x).
		\end{equation}
	\end{lemma}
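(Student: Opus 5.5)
The plan is to compute $A_{pq}(\chi_p\chi_q)$ directly by splitting the summation variable according to its residue modulo $p$, rather than going through $L$-functions. Since $\chi_p(n)=0$ when $p\mid n$, only $n$ with $n\not\equiv0\pmod p$ contribute. Write each $1\le n<pq$ uniquely as $n=x+pm$ with $1\le x\le p-1$ and $0\le m\le q-1$. Multiples of $q$ need no separate treatment, because $\chi_q$ already vanishes on them. Then $\chi_p(n)=\chi_p(x)$. Also $x+pm\equiv p(m+xp^{-1})\pmod q$, so
\[
\chi_q(n)=\chi_q(p)\,\chi_q(m+d_x),
\]
where $d_x=[xp^{-1}]_q$ exactly as in the statement. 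This gives
\[
A_{pq}(\chi_p\chi_q)=\chi_q(p)\sum_{x=1}^{p-1}\chi_p(x)\sum_{m=0}^{q-1}(x+pm)\,\chi_q(m+d_x).
\]

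Next I would split the inner sum into its $x$-part and its $pm$-part. The $x$-part is $x\sum_{m}\chi_q(m+d_x)$. This vanishes because $m+d_x$ runs over a complete residue system modulo $q$ and $\chi_q$ is nonprincipal. What remains is $p\sum_{m=0}^{q-1}m\,\chi_q(m+d_x)$.

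The key step is to evaluate this shifted first moment. Set $w=[m+d_x]_q$. For $0\le m\le q-1-d_x$ one has $m=w-d_x$ with $w\in[d_x,q-1]$. For $q-d_x\le m\le q-1$ one has $m=w-d_x+q$ with $w\in[0,d_x)$. Hence
\[
\sum_{m=0}^{q-1}m\,\chi_q(m+d_x)=\sum_{w=0}^{q-1}(w-d_x)\chi_q(w)+q\sum_{0\le w<d_x}\chi_q(w)=A_q(\chi_q)+q\,C_{\chi_q}(d_x).
\]
Here the $d_x$-term again dies by orthogonality, and the $w=0$ term contributes nothing. I expect this wrap-around bookkeeping to be the only delicate point: one has to get the range of the partial sum right, namely $1\le n<d_x$, matching the definition of $C_\chi(t)$, and check the endpoint case $d_x=0$, where both the wrap-around range and $C_{\chi_q}(0)$ are empty.

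Finally, substitute back. The constant $A_q(\chi_q)$ term is multiplied by $\sum_{x=1}^{p-1}\chi_p(x)=0$ because $\chi_p$ is nonprincipal, so it drops out. What is left is $pq\,\chi_q(p)\sum_{x}\chi_p(x)\,C_{\chi_q}(d_x)$, which is \eqref{eq:CRTformula}.
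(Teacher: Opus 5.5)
Your proof is correct and is essentially the paper's argument, only indexed by the quotient $m$ (the paper's $t_{x,y}$) instead of the $q$-residue $y$, which lets $\chi_q$ absorb the multiples of $q$ automatically. The wrap-around substitution $w=[m+d_x]_q=[yp^{-1}]_q$ yielding $A_q(\chi_q)+q\,C_{\chi_q}(d_x)$, and the two orthogonality cancellations, are the same as in the paper.
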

	
	\begin{proof}
		For $1\le x\le p-1$ and $1\le y\le q-1$, let $R(x,y)$ be the CRT
		representative in $\{1,\dots,pq-1\}$ with
		\[
		R(x,y)\equiv x\pmod p,\qquad
		R(x,y)\equiv y\pmod q.
		\]
		Write
		\[
		R(x,y)=x+pt_{x,y},
		\qquad
		t_{x,y}=[(y-x)p^{-1}]_q.
		\]
		The terms involving $x$ disappear after summation over $y$ because
		$\sum_y\chi_q(y)=0$.  Hence
		\[
		A_{pq}(\chi_p\chi_q)
		=
		p\sum_x\chi_p(x)\sum_y t_{x,y}\chi_q(y).
		\]
		With $d_x=[xp^{-1}]_q$ and $w=[yp^{-1}]_q$, one has
		\[
		t_{x,y}=[w-d_x]_q=w-d_x+q\,\mathbf1_{\{w<d_x\}},
		\qquad
		\chi_q(y)=\chi_q(p)\chi_q(w).
		\]
		As $y$ runs through the nonzero residues, so does $w$.  The complete
		sum of $\chi_q(w)$ is zero, and therefore
		\[
		\sum_y t_{x,y}\chi_q(y)
		=
		\chi_q(p)
		\left(A_q(\chi_q)+qC_{\chi_q}(d_x)\right).
		\]
		The term involving $A_q(\chi_q)$ vanishes after summation over $x$,
		and \eqref{eq:CRTformula} follows.
	\end{proof}
	
	Combining Lemma~\ref{lem:CRT} and Theorem~\ref{thm:C4basic} gives the
	exact criterion
	\begin{equation}\label{eq:primitivecriterion}
	S_{pq}(a)=\frac{pq\,k}{2}
	\iff
	\Ree\left(
	\chi_q(p)
	\sum_{x=1}^{p-1}
	\chi_p(x)C_{\chi_q}(d_x)
	\right)=0.
	\end{equation}
	
	\subsection{The primitive local type \texorpdfstring{$(4,2)$}{(4,2)}}
	
	We now assume
	$\psi=\chi_p\eps_q$,
	where $\chi_p$ is quartic modulo $p$ and $\eps_q$ is the quadratic
	character modulo $q$.  Put
	$\rho_p(x)=\Ree\chi_p(x)$.
	Thus
	\[
	\rho_p(x)=
	\begin{cases}
	1,&\chi_p(x)=1,\\
	-1,&\chi_p(x)=-1,\\
	0,&\chi_p(x)=\pm i.
	\end{cases}
	\]
	
	\begin{proposition}[Parity and cyclicity]\label{prop:42structure}
		Let $H=\ker(\chi_p\eps_q)$.  Then $\chi_p\eps_q$ is odd if and only if
		\[
		\begin{cases}
		p\equiv1\pmod8,\ q\equiv3\pmod4,
		\end{cases}
		\qquad\text{or}\qquad
		\begin{cases}
		p\equiv5\pmod8,\ q\equiv1\pmod4.
		\end{cases}
		\]
		Moreover, $H$ is cyclic if and only if
		\begin{equation}\label{eq:cyclic42}
		\gcd\left(\frac{p-1}{2},q-1\right)=2.
		\end{equation}
	\end{proposition}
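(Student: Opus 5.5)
The proof splits into two independent parts: a parity computation, done locally at each prime, and a cyclicity computation for the kernel, done through CRT.

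\emph{Parity.} We have $\psi(-1)=\chi_p(-1)\eps_q(-1)$, so we compute each factor with a primitive root. Let $g$ be a primitive root modulo $p$, normalized so that $\chi_p(g)=i$ (replace $\chi_p$ by its conjugate if needed; conjugation does not change the value at $-1$). Since $-1=g^{(p-1)/2}$, we get $\chi_p(-1)=i^{(p-1)/2}$. This value is $1$ when $8\mid p-1$ and $-1$ when $p\equiv5\pmod8$; recall that $4\mid p-1$ because $\chi_p$ is quartic. On the other side, $\eps_q(-1)=(-1)^{(q-1)/2}$, which is $1$ for $q\equiv1\pmod4$ and $-1$ for $q\equiv3\pmod4$. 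The product equals $-1$ exactly when one factor is $1$ and the other is $-1$. These are precisely the two displayed cases.

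\emph{Cyclicity.} Identify $G\simeq C_{p-1}\times C_{q-1}$ via generators $g$ modulo $p$ and $h$ modulo $q$, and write elements as $(g^x,h^y)$. Then $\chi_p\eps_q(g^x,h^y)=i^x(-1)^y=i^{x+2y}$, so
\[
H=\{(x,y): x+2y\equiv0\pmod 4\}.
\]
Because $\psi$ is primitive, neither restriction of $\psi$ is trivial, so $H$ has index $4$ and order $(p-1)(q-1)/4$. The group $H$ is cyclic if and only if some element has order $|H|$.

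To find the exponent of $H$, consider its projections. Let $H_q=H\cap(1\times C_{q-1})$, the set of $y$ with $2y\equiv0\pmod4$, that is, $y$ even; so $H_q\simeq C_{(q-1)/2}$. The projection of $H$ onto the $q$-factor is everything, since for any $y$ we can choose $x\equiv-2y\pmod4$. Similarly, the projection onto the $p$-factor consists of the even $x$, since $x\equiv-2y$ forces $x$ even; so it is $C_{(p-1)/2}$.

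An element $(g^x,h^y)\in H$ therefore has order $\lcm(\ord(g^x),\ord(h^y))$, which divides
\[
\lcm\!\left(\frac{p-1}{2},\,q-1\right)=\frac{(p-1)(q-1)}{2\gcd\left(\frac{p-1}{2},q-1\right)}.
\]
This bound shows that cyclicity forces $\gcd\bigl(\frac{p-1}{2},q-1\bigr)\le2$. This gcd is always even, since $4\mid p-1$ and $q$ is odd, so it must equal $2$.

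For the converse, assume the gcd is $2$ and exhibit a generator. Take $(g^2,h)$, which lies in $H$ because $2+2\equiv0\pmod4$. Its order is $\lcm\bigl(\frac{p-1}{2},q-1\bigr)=(p-1)(q-1)/4=|H|$, so $H$ is cyclic.

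Equivalently, the whole argument can be phrased through Goursat's lemma (Lemma~\ref{lem:linkage}): $H$ is the fiber product over the common quotient $C_2$ of $C_{(p-1)/2}$ and $C_{q-1}$. I expect the main obstacle to be identifying the projections of $H$ correctly and confirming that primitivity gives index exactly $4$; after that the order computation is routine.
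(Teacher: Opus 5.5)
Your proof is correct and follows essentially the same route as the paper: the same exponent coordinates with membership condition $x+2y\equiv0\pmod 4$, the same exponent bound from the $p$-projection landing in the squares, and the same generator $(g^2,h)$. The paper only phrases the gcd condition as $\gcd(r,s)=1$ with $p-1=4r$, $q-1=2s$. One small correction: index four comes from $\psi$ having exact order four, not from primitivity.
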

	
	\begin{proof}
		The parity assertion follows from
		$\chi_p(-1)=(-1)^{(p-1)/4}$
		and
		$\eps_q(-1)=(-1)^{(q-1)/2}$.
		For cyclicity, write $p-1=4r$ and $q-1=2s$, and choose generators
		$g_p,g_q$ such that
		\[
		\chi_p(g_p)=i,\qquad \eps_q(g_q)=-1.
		\]
		Then
		\[
		(g_p^u,g_q^v)\in H
		\iff
		u+2v\equiv0\pmod4.
		\]
		Hence the $p$-component of an element of $H$ lies in the subgroup of
		squares, of order $(p-1)/2=2r$, while the $q$-projection is
		surjective.  Therefore the exponent of $H$ is at most
		$\lcm(2r,2s)$.
		Since
		\[
		|H|=\frac{(p-1)(q-1)}4=2rs,
		\]
		cyclicity forces $\gcd(r,s)=1$, equivalently
		\eqref{eq:cyclic42}.  Conversely, if $\gcd(r,s)=1$, then
		$(g_p^2,g_q)\in H$
		has order
		\[
		\lcm(2r,2s)=2rs=|H|,
		\]
		so $H$ is cyclic.
	\end{proof}
	
	Define
	\[
	d_x=[xp^{-1}]_q
	\]
	and
	\[
	E_q(t)=\sum_{1\le n<t}\left(\frac nq\right).
	\]
	
	\begin{theorem}[The $(4,2)$ criterion]\label{thm:42criterion}
		Assume that $H=\ker(\chi_p\eps_q)$ is cyclic and that
		$\chi_p\eps_q$ is odd.  If $a$ is a generator of $H$, then
		\[
		S_{pq}(a)=\frac{pq\,k}{2}
		\]
		if and only if
		\begin{equation}\label{eq:delta42}
		\Delta_{4,2}(p,q)
		:=
		\sum_{x=1}^{p-1}\rho_p(x)E_q(d_x)
		=0.
		\end{equation}
		Moreover,
		\begin{equation}\label{eq:delta42half}
		\Delta_{4,2}(p,q)
		=
		2\sum_{x=1}^{(p-1)/2}\rho_p(x)E_q(d_x).
		\end{equation}
	\end{theorem}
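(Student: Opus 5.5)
The plan is to specialize the general criterion \eqref{eq:primitivecriterion} to $\chi_q=\eps_q$. There the coefficient $\chi_q(p)=\eps_q(p)=\pm1$ is real and nonzero, and every partial sum $C_{\eps_q}(d_x)=E_q(d_x)$ is a real integer. Hence
\[
\Ree\Bigl(\eps_q(p)\sum_{x=1}^{p-1}\chi_p(x)E_q(d_x)\Bigr)
=\eps_q(p)\sum_{x=1}^{p-1}\rho_p(x)E_q(d_x)=\eps_q(p)\,\Delta_{4,2}(p,q).
\]
Combined with Lemma~\ref{lem:CRT} and Theorem~\ref{thm:C4basic}, this gives the equivalence \eqref{eq:delta42} at once. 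The cyclicity and oddness hypotheses, which are available through Proposition~\ref{prop:42structure}, only serve to place us in the cyclic-annihilator case with $\psi=\chi_p\eps_q$ odd.

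For the halving formula \eqref{eq:delta42half}, I will pair $x$ with $p-x$.

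\emph{Character side.} One has $\rho_p(p-x)=\chi_p(-1)\rho_p(x)$.

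\emph{Interval side.} One has $d_{p-x}=[-xp^{-1}]_q$, which equals $q-d_x$ when $d_x\neq0$ and equals $0$ when $d_x=0$. The latter case happens only if $q\mid x$; then both terms vanish because $E_q(0)=0$.

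\emph{Reflection of partial sums.} Since $\sum_{n=1}^{q-1}\eps_q(n)=0$,
\[
E_q(q-d)=-\sum_{m=1}^{d}\eps_q(-m)=-\eps_q(-1)\bigl(E_q(d)+\eps_q(d)\bigr),\qquad 1\le d\le q-1.
\]

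\emph{Combining.} Oddness gives $\chi_p(-1)\eps_q(-1)=-1$, so the paired term becomes
\[
\rho_p(p-x)E_q(d_{p-x})=\rho_p(x)\bigl(E_q(d_x)+\eps_q(d_x)\bigr).
\]
Summing over $1\le x\le (p-1)/2$ yields
\[
\Delta_{4,2}(p,q)=2\sum_{x\le(p-1)/2}\rho_p(x)E_q(d_x)+\eps_q(p)\sum_{x\le(p-1)/2}\rho_p(x)\eps_q(x),
\]
using $\eps_q(d_x)=\eps_q(x)\eps_q(p)$.

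The main obstacle is the boundary correction
\[
T_{1/2}=\sum_{1\le x\le (p-1)/2}\rho_p(x)\eps_q(x),
\]
which has to be shown to vanish. The same reflection over the full range, applied to $\Delta$ itself, only gives $\sum_{x=1}^{p-1}\rho_p(x)\eps_q(x)=0$, not the half-range statement.

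\emph{First attempt.} Carefully recheck the endpoint convention for $E_q$, using $n<t$ versus $n\le t$, and the exact form of $t_{x,y}$ in Lemma~\ref{lem:CRT}. An off-by-one there would shift $\eps_q(d_x)$ and could make the correction disappear identically.

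\emph{Fallback.} Exploit further structure of $\rho_p$. It is supported on the squares modulo $p$ and equals the quadratic character of $p$ evaluated on square roots. I would then try to rewrite $T_{1/2}$ through the substitution $x\mapsto 2x$ or a second reflection, and show it is zero under the parity conditions of Proposition~\ref{prop:42structure}.

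If neither works, I would state \eqref{eq:delta42half} with the explicit correction term $\eps_q(p)T_{1/2}$ included.
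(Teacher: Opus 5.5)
Your proof of the equivalence \eqref{eq:delta42} is correct and matches the paper's: $\eps_q(p)=\pm1$ is real and $E_q(d_x)\in\Z$, so the real part in \eqref{eq:primitivecriterion} equals $\eps_q(p)\Delta_{4,2}(p,q)$.

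The gap is in the halving step, where you wrote $d_{p-x}=[-xp^{-1}]_q$. The residue $d_x$ is taken modulo $q$, and there $p$ is a unit, not zero. So $(p-x)p^{-1}\equiv 1-xp^{-1}\pmod q$, and $d_{p-x}=[1-d_x]_q$. For example, with $(p,q)=(5,17)$ one has $d_1=7$ and $d_4=11$, not $10$.

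Thus $d_{p-x}=q+1-d_x$ when $d_x\ge2$. When $d_x\in\{0,1\}$, one gets $d_{p-x}\in\{1,0\}$, and both terms vanish because $E_q(0)=E_q(1)=0$. Your own reflection identity, shifted by one, gives
\[
E_q(q+1-d)=E_q(q-d)+\eps_q(-d)=-\eps_q(-1)E_q(d),\qquad 2\le d\le q-1.
\]
The $\eps_q(d)$ term cancels, so $E_q([1-d]_q)=-\eps_q(-1)E_q(d)$ for all $0\le d\le q-1$. Combine this with $\rho_p(p-x)=\chi_p(-1)\rho_p(x)$ and $\chi_p(-1)\eps_q(-1)=-1$. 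The terms for $x$ and $p-x$ are then equal, which gives \eqref{eq:delta42half} with no boundary correction. This is exactly the paper's argument.

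Consequently, the ``main obstacle'' $T_{1/2}$ is an artifact of the miscomputed index, and neither of your fallbacks can repair it. In general $T_{1/2}\ne0$. For $p=5$ with $\chi_5(2)=i$, one has $T_{1/2}=\rho_5(1)\eps_q(1)=1$. For $p=13$, one gets $T_{1/2}=\eps_q(3)=\pm1$.

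So any attempt to prove $T_{1/2}=0$ must fail. Stating \eqref{eq:delta42half} with the extra term $\eps_q(p)T_{1/2}$ would also be false. For $(p,q)=(5,17)$, we have $\Delta_{4,2}(5,17)=E_{17}(7)-E_{17}(11)=0-0=0=2E_{17}(7)$. Your corrected formula would instead give $2E_{17}(7)+\bigl(\frac{5}{17}\bigr)\cdot 1=-1$.

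Rechecking the conventions for $E_q$ or $t_{x,y}$, as in your first attempt, would not locate the problem either, since both are correct as stated. The off-by-one lies in $d_{p-x}$.
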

	
	\begin{proof}
		Since $\eps_q(p)=\pm1$ is real, \eqref{eq:primitivecriterion} reduces
		to \eqref{eq:delta42}.  For the second assertion, note that
		$d_{p-x}=[1-d_x]_q$.
		Using
		$E_q([1-d]_q) = -\eps_q(-1)E_q(d)$
		and
		$\rho_p(p-x)=\chi_p(-1)\rho_p(x)$,
		together with
		$\chi_p(-1)\eps_q(-1)=-1$,
		shows that the terms corresponding to $x$ and $p-x$ are equal.
	\end{proof}
	
	\subsubsection{Triangular character sums and an arithmetic consequence}

	For arbitrary nonprincipal characters $\alpha$ modulo $p$ and
	$\beta$ modulo $q$, define
	\begin{equation}\label{eq:Tdef}
	T_{p,q}(\alpha,\beta)
	=\sum_{n=1}^{p-1}\alpha(n)
	\sum_{\substack{1\le y<q\\py<nq}}\beta(y).
	\end{equation}
	The following identities do not require the characters to have
	orders four and two.
    The triangular moment formula obtained by combining
    Lemma~\ref{lem:CRT} with \eqref{eq:Dtriangular} is equivalent to
    \cite[equation~(6)]{SzmidtUrbanowiczZagier}, at moment order one,
    by character orthogonality. The direct proof below records the
    interchange and parity identities, including their endpoints.
	
	\begin{proposition}[Triangular interchange and parity]
		\label{prop:reciprocity}
		Let $p$ and $q$ be distinct odd primes, and let $\alpha$ and $\beta$
		be nonprincipal characters modulo $p$ and $q$, respectively.
		Writing $d_x=[xp^{-1}]_q$, one has
		\begin{equation}\label{eq:Dtriangular}
		\sum_{x=1}^{p-1}\alpha(x)C_\beta(d_x)
		=\alpha(-q)\,T_{p,q}(\alpha,\beta)
		\end{equation}
		and the interchange identity
		\begin{equation}\label{eq:reciprocity}
		T_{p,q}(\alpha,\beta)=-T_{q,p}(\beta,\alpha).
		\end{equation}
		Moreover, with
		\[
		B_{p,q}(\alpha,\beta)=
		\sum_{n=1}^{(p-1)/2}\alpha(n)
		\sum_{1\le y<nq/p}\beta(y),
		\]
		we have
		\begin{equation}\label{eq:triangular-parity}
		T_{p,q}(\alpha,\beta)=
		\begin{cases}
		0,&\alpha(-1)\beta(-1)=1,\\[2pt]
		2B_{p,q}(\alpha,\beta),&\alpha(-1)\beta(-1)=-1.
		\end{cases}
		\end{equation}
	\end{proposition}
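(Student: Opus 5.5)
For \eqref{eq:Dtriangular} I will rewrite $C_\beta(d_x)$ as a sum over $y$ and then change variables in $x$ so that the condition $y<d_x$ turns into the triangular condition $py<nq$. By definition, $C_\beta(d_x)=\sum_{1\le y<d_x}\beta(y)$ with $d_x=[xp^{-1}]_q$. Each $x\in\{1,\dots,p-1\}$ determines the integer $n_x$ through
\[
 x=-qn_x+p\,d_x,
\]
where $pd_x-x\equiv0\pmod q$, since $pd_x\equiv x\pmod q$. The first step is to show that $n\equiv -q^{-1}x\pmod p$. The second is to show that $n$ runs over $\{1,\dots,p-1\}$: because $0<x<p$, we get $qn=pd_x-x$, so $qn<pd_x$ and $qn>p(d_x-1)$. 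This gives $n\ge1$ once $d_x\ge1$, and $d_x\ne0$ since $p\nmid x$. It also gives $n<p$.

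Conversely, $d_x=\lceil qn/p\rceil$. Because $p\nmid qn$, the condition $y<d_x$ is equivalent to $y<qn/p$, that is, $py<nq$. Finally $\alpha(x)=\alpha(-q)\alpha(n)$, and $x\mapsto n$ is a bijection, which proves \eqref{eq:Dtriangular}.

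For the interchange identity \eqref{eq:reciprocity}, I will consider the double sum over the full box,
\[
 \sum_{n=1}^{p-1}\sum_{y=1}^{q-1}\alpha(n)\beta(y).
\]
It vanishes because each factor sums to zero. No lattice point has $py=nq$, so the box splits into the region $py<nq$, which gives $T_{p,q}(\alpha,\beta)$, and the region $nq<py$, which gives $T_{q,p}(\beta,\alpha)$ after renaming the variables. Together these give $T_{p,q}=-T_{q,p}$.

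For the parity statement \eqref{eq:triangular-parity}, I will apply the symmetry $(n,y)\mapsto(p-n,q-y)$, which maps the triangle $py<nq$ onto its complement $py>nq$ within the box. Hence
\[
 T_{p,q}(\alpha,\beta)=\alpha(-1)\beta(-1)\,\bigl(\text{complement sum}\bigr)=-\alpha(-1)\beta(-1)\,T_{p,q}(\alpha,\beta),
\]
where the second equality uses the box-vanishing argument above. When $\alpha(-1)\beta(-1)=1$ this forces $T=0$.

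In the odd case I will split the outer range into $n\le(p-1)/2$ and $n'=p-n>(p-1)/2$. For the upper half I write
\[
 \sum_{y<n'q/p}\beta(y)=\sum_{y<q}\beta(y)-\sum_{y>q-nq/p}\beta(y)=-\beta(-1)\sum_{y'<nq/p}\beta(y'),
\]
where $y'=q-y$. Thus the $n'$ term equals $\alpha(-1)\alpha(n)\cdot\bigl(-\beta(-1)\bigr)$ times the inner sum for $n$, which is the $n$ term itself because $\alpha(-1)\beta(-1)=-1$. This yields $T=2B$.

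The main obstacle is the endpoint bookkeeping: checking that no point has $y=nq/p$ and that $d_x\ge1$, which is exactly where the primality and distinctness of $p,q$ are used.
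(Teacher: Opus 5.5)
Your proofs of the interchange identity \eqref{eq:reciprocity} and of the parity formula \eqref{eq:triangular-parity} are correct and essentially the paper's. In the even case you reflect the triangle onto its complement and use the vanishing box sum, whereas the paper pairs $n$ with $p-n$ through $F(p-n)=-\beta(-1)F(n)$; both work.

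The gap is in \eqref{eq:Dtriangular}, exactly at the endpoint you flagged. The assertion ``$d_x\ne0$ since $p\nmid x$'' is false. Because $p$ is invertible modulo $q$, $d_x=[xp^{-1}]_q$ vanishes precisely when $q\mid x$. If $q<p$, the range $1\le x\le p-1$ contains $x=q,2q,\dots$. For such $x$ your $n_x=(pd_x-x)/q=-x/q$ is negative, so $x\mapsto n_x$ does not land in $\{1,\dots,p-1\}$. Correspondingly, the $n$ with $\lceil nq/p\rceil=q$ (i.e.\ $n>p-p/q$) are never reached. For instance, with $p=5$, $q=3$ one gets $x=3\mapsto n_x=-1$, and $n=4$ is missed.

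The repair is to parametrize by $n$, as the paper does. Set $x=[-nq]_p$ and $c=\lceil nq/p\rceil\in\{1,\dots,q\}$, so that $pc=nq+x$ and $d_x=[c]_q$. When $c<q$ your argument goes through unchanged. When $c=q$ one has $d_x=0$, so the left-hand term is $\alpha(x)C_\beta(0)=0$. Meanwhile the inner triangular sum runs over all $1\le y<q$ and vanishes \emph{only because $\beta$ is nonprincipal}.

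Your argument never invokes nonprincipality of $\beta$ in this step, and that is a symptom of the omission. For principal $\beta$ and $q<p$ the identity is false: with $p=5$, $q=3$, $\alpha$ the Legendre symbol modulo $5$ and $\beta$ principal, the left side of \eqref{eq:Dtriangular} equals $2$ and the right side equals $0$. If $q>p$, no $x$ in range is divisible by $q$ and your proof is complete as written. However, the proposition is asserted for all pairs of distinct odd primes, so the case $q<p$ must be handled.
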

	
	\begin{proof}
		For $1\le n\le p-1$, put $x=[-nq]_p$ and
		$c=\lceil nq/p\rceil$. Then $nq+x=pc$, so
		$d_x=[c]_q$. If $c<q$, the inequality $py<nq$ is
		precisely $1\le y<c$. If $c=q$, the two sums are zero:
		$C_\beta(d_x)=C_\beta(0)=0$, while the sum over
		$1\le y<q$ vanishes because $\beta$ is nonprincipal.
		Since $\alpha(x)=\alpha(-q)\alpha(n)$ and the map
		$n\mapsto[-nq]_p$ permutes $1,\ldots,p-1$, this proves
		\eqref{eq:Dtriangular}.
		
		The conditions $py<nq$ and $nq<py$ partition the rectangle
		$1\le n<p$, $1\le y<q$: equality cannot occur since $p$
		and $q$ are distinct primes. The sum of $\alpha(n)\beta(y)$
		over the whole rectangle is zero. This gives
		\eqref{eq:reciprocity}.
		
		Finally, let
		$F(n)=\sum_{1\le y<nq/p}\beta(y)$. Replacing $y$ by
		$q-y$, and using $\sum_{y=1}^{q-1}\beta(y)=0$, gives
		\[
		F(p-n)=-\beta(-1)F(n).
		\]
		As $\alpha(p-n)=\alpha(-1)\alpha(n)$, the summands for
		$n$ and $p-n$ are equal when
		$\alpha(-1)\beta(-1)=-1$, and opposite when the product
		is $1$. Pairing these terms proves
		\eqref{eq:triangular-parity}.
	\end{proof}
	
	In the primitive $(4,2)$ case, specializing
	$\alpha=\chi_p$ and $\beta=\eps_q$ in
	\eqref{eq:Dtriangular} and applying
	Lemma~\ref{lem:CRT} and Theorem~\ref{thm:C4basic} gives
	\begin{equation}\label{eq:deviationT}
	S_{pq}(a)-\frac{pq\,k}{2}
	=\frac{pq}{2}\eps_q(p)
	\Ree\!\left(\chi_p(-q)T_{p,q}(\chi_p,\eps_q)\right).
	\end{equation}
	The parity identity also gives the shorter exact formula
	\begin{equation}\label{eq:deviation-halftriangle}
	S_{pq}(a)-\frac{pq\,k}{2}
	=pq\,\eps_q(p)
	\Ree\!\left(\chi_p(-q)B_{p,q}(\chi_p,\eps_q)\right).
	\end{equation}
	The next consequence applies to all primitive quartic local types,
	not only $(4,2)$.
	
	\begin{corollary}[Triangular evaluation of primitive quartic deviations]
		\label{cor:primitive-quartic-divisibility}
		Let $\psi$ be a primitive character of exact order four modulo
		$pq$, with $p$ and $q$ distinct odd primes. Suppose that
		$H=\ker\psi$ is cyclic, and let $a$ be a generator of $H$.
		Then, with $k=|H|=(p-1)(q-1)/4$,
		\begin{equation}\label{eq:primitive-divisibility}
		S_{pq}(a)\equiv0\pmod{pq}.
		\end{equation}
		If $\psi=\alpha\beta$ is odd, where the nonprincipal local
		characters $\alpha$ and $\beta$ have conductors $p$ and $q$,
		respectively, then more precisely
		\begin{equation}\label{eq:primitive-integer-deviation}
		\frac{S_{pq}(a)}{pq}-\frac{k}{2}
		=\Ree\!\left(
		\beta(p)\alpha(-q)B_{p,q}(\alpha,\beta)
		\right)\in\mathbb Z.
		\end{equation}
	\end{corollary}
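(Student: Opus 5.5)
The plan is to split according to the parity of $\psi$ and reduce everything to the deviation formula of Theorem~\ref{thm:character-criterion}, with the odd case evaluated through Lemma~\ref{lem:CRT} and Proposition~\ref{prop:reciprocity}.

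\emph{Even $\psi$.} Here $H^\perp=\{1,\psi,\psi^2,\overline\psi\}$ consists entirely of even characters, since $\psi^2$ and $\overline\psi$ inherit the parity of $\psi$. Theorem~\ref{thm:character-criterion} then has an empty odd sum, so $S_{pq}(a)=pqk/2$. To get \eqref{eq:primitive-divisibility} from this, one must know $k$ is even. Both local characters are nonprincipal and $\psi$ has order four, so for a local factor of order four, say $\alpha$ modulo $p$, the $p$-projection of $H$ is $\ker\alpha$ up to the other factor's contribution. More simply, $|H|=(p-1)(q-1)/4$, and this is even unless $p\equiv q\equiv3\pmod 4$. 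In that exceptional case each local character has order dividing $\gcd(4,p-1)=2$, so $\psi$ would be quadratic, a contradiction. Hence $k$ is even and $pq\mid S_{pq}(a)$.

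\emph{Odd $\psi=\alpha\beta$.} Theorem~\ref{thm:C4basic} gives the deviation $\tfrac12\Ree A_{pq}(\psi)$. By Lemma~\ref{lem:CRT},
\[
A_{pq}(\alpha\beta)=pq\,\beta(p)\sum_{x}\alpha(x)C_\beta(d_x).
\]
By \eqref{eq:Dtriangular} the inner sum equals $\alpha(-q)T_{p,q}(\alpha,\beta)$. Since $\alpha(-1)\beta(-1)=\psi(-1)=-1$, the parity identity \eqref{eq:triangular-parity} replaces $T_{p,q}$ by $2B_{p,q}$. Combining these gives
\[
S_{pq}(a)-\frac{pqk}{2}=pq\,\Ree\bigl(\beta(p)\alpha(-q)B_{p,q}(\alpha,\beta)\bigr),
\]
which, after dividing by $pq$, is \eqref{eq:primitive-integer-deviation} apart from the integrality claim.

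\emph{Integrality.} All character values here lie in $\{\pm1,\pm i\}$, so $B_{p,q}(\alpha,\beta)\in\Z[i]$ and the real part of $\beta(p)\alpha(-q)B_{p,q}$ lies in $\Z$. Hence the deviation $S_{pq}(a)/pq-k/2$ is an integer.

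It remains to show $k/2\in\Z$ in the odd case, so that $S_{pq}(a)/pq$ itself is an integer and \eqref{eq:primitive-divisibility} follows. The only potential obstruction to $k$ being even is again $p\equiv q\equiv 3\pmod4$, which is excluded exactly as before. This parity check is the only delicate point: the main formula is a direct chaining of earlier identities.
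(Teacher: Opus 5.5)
Your proof is correct and matches the paper's argument step for step. In the odd case you chain Lemma~\ref{lem:CRT}, \eqref{eq:Dtriangular}, \eqref{eq:triangular-parity} and Theorem~\ref{thm:C4basic}, use Gaussian-integer integrality, and get $k$ even because $p\equiv q\equiv 3\pmod 4$ would force $\psi^2=1$. The only cosmetic difference is the even case: you read off the empty odd sum in Theorem~\ref{thm:character-criterion}, whereas the paper notes $-1\in H$ and applies complementary pairing (Proposition~\ref{prop:minus-one}), which amounts to the same thing.
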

	
	\begin{proof}
		Since $\psi$ has order four, at least one of $p-1$ and
		$q-1$ is divisible by four. The other is even; thus
		$k=(p-1)(q-1)/4$ is even. If $\psi(-1)=1$, then
		$-1\in H$ and complementary pairing gives
		$S_{pq}(a)=pq\,k/2$, which proves
		\eqref{eq:primitive-divisibility}.
		
		Suppose now that $\psi(-1)=-1$. Since $\psi$ is primitive,
		its local factors $\alpha$ and $\beta$ are nonprincipal.
		Combining Lemma~\ref{lem:CRT} with
		\eqref{eq:Dtriangular} and \eqref{eq:triangular-parity},
		we obtain
		\[
		A_{pq}(\psi)
		=2pq\,\beta(p)\alpha(-q)B_{p,q}(\alpha,\beta).
		\]
		Theorem~\ref{thm:C4basic} now yields
		\eqref{eq:primitive-integer-deviation}. Because $\psi$ has
		order four, $\alpha$ and $\beta$ take values among
		$0,\pm1,\pm i$. Hence
		$\beta(p)\alpha(-q)B_{p,q}(\alpha,\beta)$
		is a Gaussian integer, and its real part is an integer.
		Since $k$ is even, \eqref{eq:primitive-divisibility} follows.
	\end{proof}
	The divisibility part is an earlier elementary observation, not a
    new consequence of the quartic calculation: Merca~\cite[Section~4, p.~23]{Merca2011}
    states immediately before his Conjecture~1 that whenever
    $(a-1,m)=1$, the residue sum in his equation~(39) is divisible by $m$.
    Indeed, the geometric sum gives
    \[
      (a-1)S_{pq}(a)\equiv a(a^k-1)\equiv0\pmod{pq},
    \]
    hence $pq\mid S_{pq}(a)$. For a primitive quartic kernel, both local
    character orders are even. Their images therefore share $-1$, so
    both projections of the kernel are nontrivial. A generator of a
    cyclic kernel consequently satisfies $(a-1,pq)=1$.
    The half-triangular evaluation in~\eqref{eq:primitive-integer-deviation}
    specializes the classical identity of
    Szmidt--Urbanowicz--Zagier~\cite[equation~(6)]{SzmidtUrbanowiczZagier}
    to the residue-balance problem.
	
\subsubsection{A classical class-number obstruction}
The character-coordinate interpretation of residue deviations is
classical; compare Girstmair~\cite{Girstmair1997,Girstmair2026}.
Here the relative class-number formula supplies a necessary condition
for any coset of a primitive quartic kernel to have centered sum.

\begin{proposition}\label{prop:class-number-obstruction}
Let $\psi$ be a primitive odd quartic character modulo $N=pq$,
let $H=\ker\psi$, and choose $g$ with $\psi(g)=i$.
For a coset $C$ of $H$ put
\[
 D(C)=\sum_{a\in C}\Res{a}{N}-\frac{N|H|}{2},
 \qquad x=\frac{D(H)}N,\quad y=\frac{D(gH)}N.
\]
Let $F=\mathbb Q(\zeta_N)^H$, $F^+=F\cap\mathbb R$, and
$h^-(F)=h(F)/h(F^+)$, where $h$ denotes the ideal class number.
Then $x,y\in\Z$, the four coset deviations are
\begin{equation}\label{eq:quartic-norm-pattern}
 \bigl(D(H),D(gH),D(g^2H),D(g^3H)\bigr)=N(x,y,-x,-y),
\end{equation}
and
\begin{equation}\label{eq:quartic-class-number}
 h^-(F)=2(x^2+y^2).
\end{equation}
If $H$ is sum-balanced, then $h^-(F)=2b^2$ for some integer $b\ge1$,
and its two noncentered cosets have deviations of absolute value $Nb$.
If $h^-(F)/2$ is not an integer square, no coset is centered.
No cyclicity assumption on $H$ is required.
\end{proposition}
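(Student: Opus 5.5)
We will read off all coset deviations from the character criterion, identify the odd first moment with $x+iy$, and then feed this into the relative class-number formula.

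\emph{Coset deviations.} First we will show that $D(g^jH)=\frac{1}{4}\sum_{\chi\in H^\perp}\overline{\chi}(g^j)A_N(\chi)$ minus its principal contribution. This follows exactly as in the proof of Theorem~\ref{thm:character-criterion}, using orthogonality in the form $1_{g^jH}(z)=\frac14\sum_{\chi\in H^\perp}\overline\chi(g^j)\chi(z)$. The even character $\psi^2$ contributes nothing by Lemma~\ref{lem:even-char}, and the principal term cancels $N|H|/2$. Writing $A=A_N(\psi)$, this gives
\[
D(g^jH)=\tfrac14\bigl(i^{-j}A+i^{j}\overline A\bigr)=\tfrac12\Ree\bigl(i^{-j}A\bigr).
\]
Hence $D(H)=\frac12\Ree A$ and $D(gH)=\frac12\operatorname{Im}A$, while $D(g^2H)=-D(H)$ and $D(g^3H)=-D(gH)$, which is \eqref{eq:quartic-norm-pattern}. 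It follows that $A=2N(x+iy)$.

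\emph{Integrality.} For $x,y\in\Z$ we will invoke Corollary~\ref{cor:primitive-quartic-divisibility}: equation \eqref{eq:primitive-integer-deviation} gives $A_N(\psi)=2N\,\beta(p)\alpha(-q)B_{p,q}(\alpha,\beta)$, which is $2N$ times a Gaussian integer. That derivation used only Lemma~\ref{lem:CRT} and Proposition~\ref{prop:reciprocity}, not cyclicity, so no cyclicity assumption on $H$ is needed.

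\emph{Class number.} The field $F$ is cyclic quartic and imaginary, since $-1\notin H$ because $\psi(-1)=-1$. Its character group is $\{1,\psi,\psi^2,\overline\psi\}$, with odd characters $\psi$ and $\overline\psi$, both of conductor $N$. The analytic relative class-number formula (Washington, Thm.~4.17) gives
\[
h^-(F)=Q\,w\prod_{\chi\ \mathrm{odd}}\Bigl(-\tfrac12B_{1,\chi}\Bigr),
\]
with $B_{1,\chi}=A_N(\chi)/N=-L(0,\chi)$. Substituting, the product is $\tfrac14|A/N|^2=|x+iy|^2=x^2+y^2$.

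The main obstacle is fixing $Q$ and $w$. For $w$: $F$ is a quartic field with conductor $pq$, $p,q$ odd, so it contains no $\mu_4$ (that would need $4\mid N$). It also contains no $\mu_3$ unless $\mathbb Q(\sqrt{-3})\subset F$. The unique quadratic subfield is fixed by $\psi^2$, which is even, so it is real; hence $w=2$. For $Q$: since $F$ is cyclic and the odd characters have the full conductor $N$, not prime-power conductor, Lemmermeyer/Hasse's criterion for cyclic CM fields gives $Q=1$. Concretely, $Q=2$ would require $\psi$ to be induced by a character of prime-power conductor, whereas $\psi$ is primitive modulo the composite $pq$. We would cite Washington's Exercise/Corollary that $Q=1$ when $F/\mathbb Q$ is cyclic and not ramified at only one prime, checking this step carefully. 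With $Q=1$ and $w=2$ we obtain $h^-(F)=2(x^2+y^2)$.

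\emph{Consequences.} A coset is centered iff $x=0$ or $y=0$. In that case $h^-(F)=2b^2$, where $b$ is the absolute value of the other coordinate. That coordinate is nonzero because $L(0,\psi)\ne0$ for odd $\psi$ (equivalently $B_{1,\psi}\ne0$, or simply $h^-\ge1$). The two remaining cosets then have deviations $\pm Nb$ by \eqref{eq:quartic-norm-pattern}. Contrapositively, if $h^-(F)/2$ is not an integer square, no coset is centered.
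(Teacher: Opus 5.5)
Your argument follows the paper's proof step for step. Orthogonality on $g^jH$ gives $D(g^jH)=\frac12\Ree\bigl((-i)^jA_N(\psi)\bigr)$. Lemma~\ref{lem:CRT} and Proposition~\ref{prop:reciprocity}, which need no cyclicity, give $A_N(\psi)/(2N)=x+iy\in\Z[i]$. The relative class-number formula with $Q_F=1$, $w_F=2$ then gives $h^-(F)=2(x^2+y^2)$. The paper takes these two constants from Louboutin; the only defects in your proposal are in your own justification of them.

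For $w_F$, you rule out $\mu_4$ and $\mu_3$ and conclude $w_F=2$. But since $\varphi(5)=4$, a quartic field may also contain $\zeta_5$. Indeed $\mathbb{Q}(\zeta_5)$ is an imaginary cyclic quartic field of odd conductor whose quadratic subfield $\mathbb{Q}(\sqrt5)$ is real, so it passes both of your tests, yet $w=10$. You need the line the paper supplies: $\zeta_5\in F$ would force $F=\mathbb{Q}(\zeta_5)$ by degree, and that field has conductor $5\ne pq$.

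For $Q_F$, you only point to an unspecified criterion. Your heuristic that $Q=2$ would need a prime-power conductor is not the right mechanism: for $\mathbb{Q}(\zeta_n)$ it is composite $n$ that gives $Q=2$. A self-contained proof is short. Since $W_F=\{\pm1\}$, $Q_F=2$ would mean some unit $\varepsilon$ has $\bar\varepsilon=-\varepsilon$. Then $F=F^+(\sqrt{\varepsilon^2})$ with $\varepsilon^2$ a unit of $F^+$, so $F/F^+$ would be unramified at every odd prime. But for $\ell\in\{p,q\}$ the local component $\psi_\ell$ of $\psi$ has order $2$ or $4$. The ramification index of $\ell$ is $\ord(\psi_\ell)$ in $F$ and $\ord(\psi_\ell^2)=\ord(\psi_\ell)/2$ in $F^+$, so $F/F^+$ is ramified at $p$ and $q$, a contradiction. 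With these two repairs the rest of your proof, including the consequences for centered cosets, is complete.
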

\begin{proof}
Write $\psi=\alpha\beta$ in its nonprincipal local characters.
Lemma~\ref{lem:CRT} and Proposition~\ref{prop:reciprocity} give
\[
 \frac{A_N(\psi)}{2N}
 =\beta(p)\alpha(-q)B_{p,q}(\alpha,\beta)\in\Z[i].
\]
Orthogonality on the coset $g^jH$ gives
$D(g^jH)=\frac12\Ree((-i)^jA_N(\psi))$.
Consequently $A_N(\psi)/(2N)=x+iy$, which proves integrality
and \eqref{eq:quartic-norm-pattern}.

The field $F$ is imaginary cyclic quartic of conductor $N$, and
its odd characters are $\psi,\overline\psi$. The classical relative
class-number formula and its cyclic-field constants give
\[
 h^-(F)=\frac{Q_Fw_F}{4}L(0,\psi)L(0,\overline\psi),
 \qquad Q_F=1,\quad w_F=2;
\]
see Louboutin~\cite{Louboutin2026}, with the formula and constants in
\href{https://arxiv.org/abs/2508.10563v1}{arXiv:2508.10563v1},
p.~2, equations~(1)--(3).
The only cyclic quartic exception to $w_F=2$ is
$\mathbb Q(\zeta_5)$, excluded by the primitive conductor $pq$.
Since $L(0,\psi)=-A_N(\psi)/N$, this yields
$h^-(F)=|A_N(\psi)/N|^2/2=2(x^2+y^2)$.
If $x=0$, positivity of the relative class number forces $y\ne0$;
put $b=|y|$. More generally, a centered coset forces $x=0$ or $y=0$,
so $h^-(F)/2$ must be an integer square.
\end{proof}

The square condition is necessary; it does not determine which
coordinate, if any, vanishes. For the characters
$\psi=\chi_5\eps_q$, normalized by $\chi_5(2)=i$, the exact moments give
\[
 \begin{array}{c|c|c}
 N&(x,y)&h^-(F)\\ \hline
 85&(0,1)&2\\
 185&(-2,1)&10
 \end{array}
\]
The latter value excludes a centered coset because $10/2=5$ is not
a square. These class-number values follow from the classical formula
applied to the displayed exact moments.

	\subsection{Small quartic primes}
	
	\subsubsection{The case \texorpdfstring{$p=5$}{p=5}}
	
	Let $\chi_5$ be the quartic character modulo $5$ normalized by
	$\chi_5(2)=i$.  Then
	$\chi_5(-1)=-1$.
	Hence the primitive type $(4,2)$ is odd precisely when
	$q\equiv1\pmod4$.
	Moreover,
	\[
	\gcd\left(\frac{5-1}{2},q-1\right)=2,
	\]
	so the kernel is automatically cyclic.
	
	\begin{corollary}\label{cor:p5}
		Let $q\ne5$ be a prime with $q\equiv1\pmod4$, let
		$H=\ker(\chi_5\eps_q)$,
		and let $a$ be any generator of $H$.  Then
		\[
		k=q-1
		\]
		and
		\[
		S_{5q}(a)=\frac{5q(q-1)}2
		\]
		if and only if
		\[
		\sum_{n=1}^{\lfloor q/5\rfloor}
		\left(\frac nq\right)=0
		\]
		when
		\[
		q\equiv\pm1\pmod5,
		\]
		and if and only if
		\[
		\sum_{n=1}^{\lfloor 2q/5\rfloor}
		\left(\frac nq\right)=0
		\]
		when
		\[
		q\equiv\pm2\pmod5.
		\]
	\end{corollary}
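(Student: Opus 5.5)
We specialize the half-triangular formula \eqref{eq:deviation-halftriangle} with $p=5$, $\alpha=\chi_5$, $\beta=\eps_q$.

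\textbf{Setup.} For $q\equiv1\pmod4$ the character $\chi_5\eps_q$ is odd. As noted just before the corollary, its kernel is cyclic. Hence Theorem~\ref{thm:42criterion} applies, and $k=4(q-1)/4=q-1$. The deviation is
\[
5q\,\eps_q(5)\Ree\bigl(\chi_5(-q)B_{5,q}(\chi_5,\eps_q)\bigr).
\]

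\textbf{Computing $B$.} The half-triangle runs over $n=1,2$, with $\chi_5(1)=1$ and $\chi_5(2)=i$. Since $q$ is prime, the bound $y<nq/5$ is the same as $y\le\lfloor nq/5\rfloor$. Therefore
\[
B_{5,q}=E_q(\lfloor q/5\rfloor+1)+i\,E_q(\lfloor 2q/5\rfloor+1),
\]
where $E_q(t+1)=\sum_{n\le t}(n/q)$. Write $P_1=\sum_{n\le\lfloor q/5\rfloor}(n/q)$ and $P_2=\sum_{n\le\lfloor 2q/5\rfloor}(n/q)$, so $B_{5,q}=P_1+iP_2$.

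\textbf{Case split on $q\bmod5$.} Since $\chi_5(-1)=-1$, we have $\chi_5(-q)=-\chi_5(q)$.
\begin{itemize}
\item If $q\equiv1$, then $\chi_5(-q)=-1$.
\item If $q\equiv4\equiv-1$, then $\chi_5(q)=\chi_5(-1)=-1$, so $\chi_5(-q)=1$.
\item If $q\equiv2$, then $\chi_5(-q)=-i$.
\item If $q\equiv3$, then $\chi_5(3)=\chi_5(2^3)=-i$, so $\chi_5(-q)=i$.
\end{itemize}
Hence $\Ree(\chi_5(-q)(P_1+iP_2))$ equals $\mp P_1$ when $q\equiv\pm1\pmod5$, and $\pm P_2$ when $q\equiv\pm2\pmod5$. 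Because $\eps_q(5)=\pm1$ and $5q\ne0$, the deviation vanishes exactly when $P_1=0$ (for $q\equiv\pm1$) or $P_2=0$ (for $q\equiv\pm2$). This is the claimed criterion.

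\textbf{Main obstacle.} The hardest part is checking the boundary conventions. The strict inequality $py<nq$ must match the floor endpoints, which holds because $5\nmid nq$. The value of $\chi_5(-q)$ must be computed correctly for each residue class of $q$ modulo $5$. As a consistency check, the case $q=17$, modulo $85$, should give $(x,y)=(0,1)$, in agreement with the table following Proposition~\ref{prop:class-number-obstruction}.
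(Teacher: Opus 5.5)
Your proof is correct, but it reaches the criterion by a different route from the paper's.

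The paper works with Theorem~\ref{thm:42criterion}. Because $(\rho_5(1),\rho_5(2))=(1,0)$, the half-sum \eqref{eq:delta42half} reduces balance to the single condition $E_q(5^{-1}\bmod q)=0$. It then writes $5^{-1}\bmod q$ explicitly as $(4q+1)/5$, $(2q+1)/5$, $(3q+1)/5$ or $(q+1)/5$ according as $q\equiv1,2,3,4\pmod5$. In the classes $q\equiv1,3$ it uses the reflection $E_q(q+1-r)=-E_q(r)$, valid because $\eps_q$ is even, to bring the endpoint down to $\lfloor q/5\rfloor$ or $\lfloor2q/5\rfloor$.

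You start instead from \eqref{eq:deviation-halftriangle}. There the endpoints $nq/5$ are already built into $B_{5,q}(\chi_5,\eps_q)=P_1+iP_2$. The only case analysis left is the value $\chi_5(-q)\in\{\pm1,\pm i\}$, which selects the real or the imaginary part. In effect, the re-indexing $n\mapsto[-nq]_p$ in the proof of Proposition~\ref{prop:reciprocity} has already done the modular inversion and the reflection for you.

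The trade-off is as follows. Your route needs the additional identities \eqref{eq:Dtriangular} and \eqref{eq:triangular-parity}. In return, your formula displays the whole Gaussian integer $\eps_q(5)\chi_5(-q)(P_1+iP_2)=x+iy$ of Proposition~\ref{prop:class-number-obstruction}, and hence all four coset deviations at once. Your check $(x,y)=(0,1)$ at $q=17$ illustrates this. The paper's route extracts only the real part governing $H$ itself, using nothing beyond Lemma~\ref{lem:CRT} and Theorem~\ref{thm:C4basic}.
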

	
	\begin{proof}
		Since
		$(\rho_5(1),\rho_5(2))=(1,0)$,
		Theorem~\ref{thm:42criterion} gives
		\[
		S_{5q}(a)=\frac{5q(q-1)}2
		\iff
		E_q(5^{-1}\bmod q)=0.
		\]
		The inverse of $5$ modulo $q$ is
		\[
		5^{-1}\equiv
		\begin{cases}
		(4q+1)/5,&q\equiv1\pmod5,\\
		(2q+1)/5,&q\equiv2\pmod5,\\
		(3q+1)/5,&q\equiv3\pmod5,\\
		(q+1)/5,&q\equiv4\pmod5.
		\end{cases}
		\]
		Since $q\equiv1\pmod4$, the quadratic character is even, and the
		symmetry
		\[
		E_q(q+1-r)=-E_q(r)
		\]
		gives the stated formulas.
	\end{proof}

The same quartic--quadratic product character and the Legendre sums
ending at $q/5$ and $2q/5$ occur in Girstmair's treatment of digit
frequencies and class numbers, especially Section~4 of the preprint
\cite{GirstmairDigits2026}. Those character coordinates are therefore
classical; here their vanishing is interpreted as the balance test for
the specified cyclic kernel.
	
\begin{corollary}\label{cor:p5-parity}
Under the hypotheses of Corollary~\ref{cor:p5}, sum-balance requires
$q\equiv1$ or $17\pmod{20}$.
\end{corollary}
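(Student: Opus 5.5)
The plan is to combine the criterion of Corollary~\ref{cor:p5} with a parity count. Each short sum there has the form $\sum_{n=1}^{m}\left(\frac nq\right)$ with $1\le m<q$. Every summand is a Legendre symbol of a nonzero residue, so it equals $\pm1$. A sum of $m$ terms each equal to $\pm1$ is congruent to $m$ modulo $2$. Hence the sum can vanish only when $m$ is even. Corollary~\ref{cor:p5} therefore forces the relevant upper limit $m$ to be even, and it remains to compute the parity of $m$ in each residue class of $q$ modulo $20$.

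Since $q\equiv1\pmod4$ and $q\ne5$, the class $q\bmod20$ is one of $1,9,13,17$, corresponding to $q\equiv1,4,3,2\pmod5$. I will write $q=20j+c$ and evaluate $m$ directly in each case.
\begin{itemize}
\item For $q\equiv1\pmod{20}$ the limit is $m=\lfloor q/5\rfloor=(q-1)/5=4j$, which is even.
\item For $q\equiv9\pmod{20}$ the limit is $m=\lfloor q/5\rfloor=(q-4)/5=4j+1$, which is odd.
\item For $q\equiv17\pmod{20}$ the limit is $m=\lfloor 2q/5\rfloor=(2q-4)/5=8j+6$, which is even.
\item For $q\equiv13\pmod{20}$ the limit is $m=\lfloor 2q/5\rfloor=(2q-1)/5=8j+5$, which is odd.
\end{itemize}
In the two odd cases the Legendre sum is odd, hence nonzero. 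By Corollary~\ref{cor:p5}, $H$ is then not sum-balanced, which leaves only $q\equiv1$ or $17\pmod{20}$.

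There is no real obstacle. The only care needed is to match each residue class of $q$ modulo $5$ to the correct interval endpoint ($q/5$ for $q\equiv\pm1$, $2q/5$ for $q\equiv\pm2$), as stated in Corollary~\ref{cor:p5}, and to get the floor computations right. The result is only a necessary condition: in the surviving classes the sum has even length and may or may not vanish.
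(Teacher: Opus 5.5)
Your proposal is correct and follows the paper's argument: it applies the criterion of Corollary~\ref{cor:p5} and notes that a sum of an odd number of terms $\pm1$ cannot vanish, which excludes the classes $q\equiv9$ and $q\equiv13\pmod{20}$. Your explicit floor values ($4j+1$ and $8j+5$ in the excluded classes, $4j$ and $8j+6$ in the surviving ones) simply make the paper's parity claims concrete.
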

\begin{proof}
For $q\equiv9\pmod{20}$, the sum with $\lfloor q/5\rfloor$ terms
has odd length. For $q\equiv13\pmod{20}$, the sum with
$\lfloor2q/5\rfloor$ terms has odd length. Each summand is $1$ or
$-1$, so neither sum can vanish. These exclude two of the four
classes allowed by $q\equiv1\pmod4$ and $q\ne5$.
\end{proof}

	\begin{example}
		For $q=17$ one may take $a=14$ modulo $85$.  Then
		\[
		k=16,\qquad
		S_{85}(14)=680=\frac{85\cdot16}{2}.
		\]
		Indeed,
		\[
		\sum_{n=1}^{6}\left(\frac n{17}\right)=0.
		\]
	\end{example}
	
	\begin{example}
		For $q=37$ one may take $a=39$ modulo $185$.  Then
		\[
		k=36,\qquad
		S_{185}(39)=2960\ne3330=\frac{185\cdot36}{2}.
		\]
		Here
		\[
		\sum_{n=1}^{14}\left(\frac n{37}\right)=2.
		\]
		Thus even the congruence class of $q$ modulo $20$ does not determine
		the outcome: $17\equiv37\pmod{20}$, but the two examples behave
		differently.
	\end{example}
	
	\subsubsection{The case \texorpdfstring{$p=13$}{p=13}}
	
	Normalize the quartic character modulo $13$ by $\chi_{13}(2)=i$.
	Then
	\[
	(\rho_{13}(1),\dots,\rho_{13}(6))
	=
	(1,0,1,-1,0,0).
	\]
	Since $13\equiv5\pmod8$, the oddness condition requires
	$q\equiv1\pmod4$, while cyclicity requires
	$\gcd(6,q-1)=2$.
	For primes $q>3$ these conditions are equivalent to
	$q\equiv5\pmod{12}$.
	
	\begin{corollary}\label{cor:p13}
		Let $q\ne13$ be a prime with $q\equiv5\pmod{12}$ and put
		$d_r=[13^{-1}r]_q$.
		If $a$ is a generator of
		$\ker(\chi_{13}\eps_q)$,
		then
		\[
		S_{13q}(a)=\frac{13q\,k}{2}
		\qquad
		\text{if and only if}
		\qquad
		E_q(d_1)+E_q(d_3)-E_q(d_4)=0.
		\]
	\end{corollary}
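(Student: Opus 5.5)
The plan is to specialize Theorem~\ref{thm:42criterion} with $p=13$ and then use the half-sum formula \eqref{eq:delta42half} to reduce to $x\in\{1,\dots,6\}$. First I would check the hypotheses. Since $q\equiv5\pmod{12}$, we have $q\equiv1\pmod4$, and because $13\equiv5\pmod8$, Proposition~\ref{prop:42structure} shows that $\chi_{13}\eps_q$ is odd. Moreover $q\equiv2\pmod3$ gives $\gcd(6,q-1)=2$, so \eqref{eq:cyclic42} holds and $H$ is cyclic. The theorem then applies to any generator $a$ of $H$: we have $S_{13q}(a)=13qk/2$ if and only if $\Delta_{4,2}(13,q)=0$.

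Next I would evaluate $\rho_{13}$ on $1,\dots,6$. With $\chi_{13}(2)=i$, the powers of $2$ modulo $13$ are $2,4,8,3,6,12,\dots$. This gives $\chi_{13}(4)=-1$, $\chi_{13}(8)=-i$, $\chi_{13}(3)=\chi_{13}(2^4)=1$, $\chi_{13}(6)=\chi_{13}(2^5)=i$, and $\chi_{13}(5)=\chi_{13}(2^9)=i$. Hence the real parts are $(1,0,1,-1,0,0)$, as displayed before the statement. Substituting into \eqref{eq:delta42half} gives
\[
\Delta_{4,2}(13,q)=2\bigl(E_q(d_1)+E_q(d_3)-E_q(d_4)\bigr),
\]
where $d_x=[x\cdot13^{-1}]_q$ matches the notation $d_r$ in the statement. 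The factor $2$ does not affect vanishing, so this gives the criterion.

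The only step that needs care is matching the definitions. The statement defines $d_r=[13^{-1}r]_q$, and Theorem~\ref{thm:42criterion} defines $d_x=[xp^{-1}]_q$; these agree. Both use $E_q(t)=\sum_{1\le n<t}\left(\frac nq\right)$, which is consistent. The remaining work is the short discrete-log computation modulo $13$ above, so I do not expect any real obstacle.
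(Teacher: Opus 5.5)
Your proposal is correct and matches the paper's argument. The paper gives no separate proof, but the text before the corollary sets up exactly your specialization: check oddness and cyclicity via Proposition~\ref{prop:42structure}, then substitute $(\rho_{13}(1),\dots,\rho_{13}(6))=(1,0,1,-1,0,0)$ into the half-sum formula \eqref{eq:delta42half} of Theorem~\ref{thm:42criterion}.
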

	
	\begin{example}
		For $q=29$ one may take $a=10$ modulo $377$.  Then
		\[
		k=84,\qquad
		S_{377}(10)=15834=\frac{377\cdot84}{2}.
		\]
		For $q=53$ one may take $a=56$ modulo $689$.  Then
		\[
		k=156,\qquad
		S_{689}(56)=55120\ne53742=\frac{689\cdot156}{2}.
		\]
	\end{example}
	
	\subsection{The primitive local type \texorpdfstring{$(4,4)$}{(4,4)}}\label{sec:primitive44}

	For completeness, suppose now that
	$\psi=\chi_p\chi_q$
	with both local characters quartic, and put
	$H=\ker\psi$.
	
	\begin{proposition}[Cyclicity in the \((4,4)\) case]
		\label{prop:44cyclic}
		The subgroup \(H\) is cyclic if and only if
		\[
		\gcd\left(\frac{p-1}{4},\frac{q-1}{4}\right)=1.
		\]
	\end{proposition}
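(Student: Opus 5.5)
I will mirror the proof of Proposition~\ref{prop:42structure}. Write $p-1=4r$ and $q-1=4s$, and choose generators $g_p$ of $\F_p^\times$ and $g_q$ of $\F_q^\times$ normalized so that $\chi_p(g_p)=i$ and $\chi_q(g_q)=i$. This is possible since each local character has exact order four; replacing $g_q$ by a suitable power coprime to $q-1$ if needed, we may take $\chi_q(g_q)=i$ rather than $-i$. Under the Chinese remainder theorem, $G\simeq\langle g_p\rangle\times\langle g_q\rangle$, and
\[
(g_p^u,g_q^v)\in H\iff u+v\equiv0\pmod4.
\]
Thus $H$ is the graph-type subgroup linking the exponents modulo $4$, of order $|H|=(p-1)(q-1)/4=4rs$.

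For necessity, I first determine the projections. Since $(g_p,g_q^{-1})\in H$, both projections of $H$ are surjective, so every element of $H$ has order dividing $\lcm(4r,4s)=4\lcm(r,s)$. Cyclicity requires an element of order $4rs$, which forces $4rs\mid 4\lcm(r,s)$. This gives $\lcm(r,s)=rs$, that is, $\gcd(r,s)=1$.

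For sufficiency, assume $\gcd(r,s)=1$. The element $(g_p,g_q^{-1})$ lies in $H$, and its order is $\lcm(4r,4s)=4rs=|H|$. Hence it generates $H$, and $H$ is cyclic.

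An alternative route is Lemma~\ref{lem:linkage}. There $H$ is the subgroup $H_\lambda$ with $g=\gcd(4r,4s)$, and the subgroup is cyclic exactly when its order $4rs$ equals $\lcm(4r,4s)$. Either way, no step is a real obstacle. The only care needed is the normalization of the generators, and the observation that surjectivity of both projections bounds the exponent of $H$ by $\lcm(p-1,q-1)$.
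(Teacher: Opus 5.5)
Your proof is correct and is essentially the paper's argument: the same normalization $\chi_p(g_p)=\chi_q(g_q)=i$, the same exponent bound $\lcm(4r,4s)=4\lcm(r,s)$ for necessity, and the same generator $(g_p,g_q^{-1})$ of order $4rs=|H|$ for sufficiency. Two minor points: the exponent bound holds for every subgroup of $G\simeq C_{4r}\times C_{4s}$, so it does not come from surjectivity of the projections; and in your alternative route $H$ can be identified with some $H_\lambda$ from Lemma~\ref{lem:linkage} only once it is already known to be cyclic.
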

	
	\begin{proof}
		Write
		$p-1=4r$, $q-1=4s$,
		and choose generators \(g_p\) and \(g_q\) such that
		$\chi_p(g_p)=\chi_q(g_q)=i$.
		Then
		\[
		(g_p^u,g_q^v)\in H
		\iff
		u+v\equiv0\pmod4.
		\]
		Moreover,
		\[
		|H|=\frac{(p-1)(q-1)}4=4rs.
		\]
		
		Every element of \(H\) has order dividing
		$\lcm(4r,4s)=4\,\lcm(r,s)$.
		Hence, if \(H\) is cyclic, its exponent must equal its order, and therefore
		$4rs\le 4\,\lcm(r,s)$,
		which forces
		$\gcd(r,s)=1$.
		
		Conversely, if \(\gcd(r,s)=1\), then
		$(g_p,g_q^{-1})\in H$ and this element has order
		$\lcm(4r,4s)=4rs=|H|$.
		Thus \(H\) is cyclic.
	\end{proof}
	
	The oddness condition is
	\[
	\chi_p(-1)\chi_q(-1)=-1,
	\]
	equivalently
	\[
	\{p\bmod8,q\bmod8\}=\{1,5\}.
	\]
	
	Define
	\[
	D_{4,4}(\chi_p,\chi_q)
	=
	\sum_{x=1}^{p-1}
	\chi_p(x)C_{\chi_q}([xp^{-1}]_q).
	\]
	Assume that $H=\ker(\chi_p\chi_q)$ is cyclic, that
	$\chi_p\chi_q$ is odd, and that $a$ is a generator of $H$.
	Then $H^\perp=\langle\chi_p\chi_q\rangle\simeq C_4$, and
	Lemma~\ref{lem:CRT} together with Theorem~\ref{thm:C4basic} gives
	\[
	S_{pq}(a)=\frac{pq\,k}{2}
	\iff
	\Ree\bigl(\chi_q(p)D_{4,4}(\chi_p,\chi_q)\bigr)=0.
	\]
	If
	$D_{4,4}(\chi_p,\chi_q)=U+iV$,
	this becomes
	\[
	\begin{array}{c|c}
	\chi_q(p)&\text{condition}\\ \hline
	\pm1&U=0,\\
	\pm i&V=0.
	\end{array}
	\]
	Thus, unlike the type $(4,2)$, the primitive type $(4,4)$ naturally
	involves genuinely complex partial character sums.

	\section{Subgroups of index six}

	We begin with the group-theoretic constraints on cyclic subgroups
	of index six.  Besides determining the admissible moduli, these
	constraints give an exact count of the mixed cyclic subgroups,
    independently of their residue sums. The total counts specialize
    the rank-two formulas of Tóth~\cite[Theorems~4.6--4.7]{Toth2014};
    the split according to the diagonal involution identifies the
    cases needed for the arithmetic investigation.
	
	\begin{theorem}[Existence and enumeration at index six]
		\label{thm:index6-structure}
		Let $p$ and $q$ be distinct odd primes, put $m=p-1$, $n=q-1$,
		and write $G=(\mathbb Z/pq\mathbb Z)^\times$.  A cyclic subgroup
		of index six exists in $G$ if and only if
		\begin{equation}\label{eq:index6-moduli}
		3\mid mn\qquad\text{and}\qquad \gcd(m,n)\mid 6.
		\end{equation}
		Assume these conditions and set
		$u=\nu_3(m)$, $v=\nu_3(n)$ and
		\[
		M_3(p,q)=
		\begin{cases}
		1,&\min(u,v)=0,\\
		4,&u=v=1,\\
		3,&\min(u,v)=1<\max(u,v).
		\end{cases}
		\]
		Then the number of cyclic subgroups of index six is
		\[
		N_6^{\mathrm{cyc}}(p,q)=
		\begin{cases}
		3M_3(p,q),&\nu_2(m)=\nu_2(n)=1,\\
		2M_3(p,q),&\text{otherwise},
		\end{cases}
		\]
		while the number of such subgroups not containing $-1$ is
		\begin{equation}\label{eq:index6-mixed-count}
		\boxed{N_6^{\mathrm{mix}}(p,q)=2M_3(p,q).}
		\end{equation}
		In particular, whenever cyclic index-six subgroups exist, the
		number of mixed ones is exactly $2$, $6$, or $8$.
	\end{theorem}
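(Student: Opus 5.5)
We will count by duality, working entirely in the character group $\widehat G\simeq C_m\times C_n$. A subgroup $H\le G$ of index six corresponds to its annihilator $X=H^\perp$, a subgroup of order six in $\widehat G$. Moreover $H$ is dual to $\widehat G/X$, so $H$ is cyclic if and only if $\widehat G/X$ is cyclic. Thus $N_6^{\mathrm{cyc}}(p,q)$ is the number of order-six subgroups $X\le C_m\times C_n$ with cyclic quotient. Since everything splits into primary components, $X=X_2\times X_3$ with $|X_2|=2$ and $|X_3|=3$. The quotient is cyclic if and only if, for each prime $\ell$, the $\ell$-primary part of $\widehat G/X$ is cyclic. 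We then count the choices prime by prime and multiply.

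\textbf{Primes $\ell\ge5$.} Here $X_\ell=1$, so the $\ell$-part of $\widehat G$ must already be cyclic, i.e.\ $\ell\nmid\gcd(m,n)$.

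\textbf{Criterion for $\ell\in\{2,3\}$.} We use the following fact for an abelian $\ell$-group $P$ of rank at most two and a subgroup $Y$ of order $\ell$: the quotient $P/Y$ is cyclic if and only if $P/(\ell P+Y)$ has order at most $\ell$. When $P$ has rank two, this holds exactly when $Y\not\subseteq \ell P$.

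\textbf{The prime $\ell=2$.} Write $P=C_{2^a}\times C_{2^b}$ with $a,b\ge1$, where $a=\nu_2(m)$ and $b=\nu_2(n)$. An element of order two lies outside $2P$ only if it has a nonzero coordinate in a factor of exponent two, so some subgroup $Y$ works only if $\min(a,b)=1$. If $a=b=1$, all three subgroups of order two work. If $a=1<b$, the elements of order two are $(1,0)$, $(0,2^{b-1})$ and $(1,2^{b-1})$; only the middle one lies in $2P$, leaving two choices.

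\textbf{The prime $\ell=3$.} Put $u=\nu_3(m)$ and $v=\nu_3(n)$. We need $u+v\ge1$ for $X_3$ to exist at all, which is the condition $3\mid mn$.
\begin{itemize}
\item If $\min(u,v)=0$, the $3$-part is cyclic and has exactly one subgroup of order three, which works.
\item If $u=v=1$, all four subgroups of order three work.
\item If $1=u<v$, exactly one of the four subgroups, namely $\langle(0,3^{v-1})\rangle$, lies in $3P$, leaving three choices.
\item If $\min(u,v)\ge2$, every order-three subgroup lies in $3P$, so no choice works.
\end{itemize}
This reproduces $M_3(p,q)$.

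\textbf{Assembling the count.} The conditions $\nu_2(\gcd)=1$, $\min(u,v)\le1$, and the absence of other common primes together say exactly that $\gcd(m,n)\mid6$. This gives \eqref{eq:index6-moduli}, and multiplying the local counts gives $N_6^{\mathrm{cyc}}$, with factor $3$ or $2$ according as $\nu_2(m)=\nu_2(n)=1$ or not.

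\textbf{The mixed count.} We have $-1\notin H$ exactly when some character in $X$ is odd. Characters in $X_3$ have odd order and are therefore even. Hence $H$ is mixed exactly when the generator of $X_2$, a quadratic character among $\eps_p,\eps_q,\eps_p\eps_q$, is odd.
\begin{itemize}
\item If $\nu_2(m)=\nu_2(n)=1$, then $p\equiv q\equiv3\pmod4$. Here $\eps_p$ and $\eps_q$ are odd and $\eps_p\eps_q$ is even, giving two odd choices among the three admissible $X_2$.
\item If, say, $\nu_2(m)=1<\nu_2(n)$, the excluded character lying in $2\widehat G$ is $\eps_q$, the square of a quartic character modulo $q$. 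The two admissible ones, $\eps_p$ and $\eps_p\eps_q$, are both odd because $p\equiv3$ and $q\equiv1\pmod4$.
\end{itemize}
In either case there are exactly two odd choices, which gives \eqref{eq:index6-mixed-count}. The values $2,6,8$ then follow from $M_3\in\{1,3,4\}$.

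The main obstacle is the local criterion for cyclicity of the quotient, specifically identifying correctly which order-$\ell$ subgroups lie in $\ell P$, together with matching the abstract $2$-primary choices to the concrete quadratic characters and their parities. We would verify these carefully using explicit generators of the two factors.
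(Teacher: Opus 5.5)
Your proposal is correct and is essentially the paper's proof: both count one Sylow component at a time, finding three or two admissible choices at $2$, $M_3(p,q)$ at $3$ and no freedom at larger primes, and then multiply. The differences are cosmetic. You work with annihilators $X\le\widehat G$ and justify the cyclicity criterion via the Frattini quotient, which the paper simply asserts as ``nontrivial on the second factor''. You obtain necessity from the local counts rather than from the bound $|H|\mid\lcm(m,n)$. You detect mixedness by the parity of the quadratic generator of $X_2$ rather than by whether the kernel omits $(-1,-1)$.
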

	
	\begin{proof}
		Under the Chinese remainder theorem,
		$G\simeq C_m\times C_n$.  If $H$ is cyclic of index six, then
		$|H|=mn/6$ divides the exponent $\operatorname{lcm}(m,n)$ of $G$.
		Hence $3\mid mn$ and $\gcd(m,n)\mid6$ are necessary.
		
		Conversely, assume \eqref{eq:index6-moduli}.  A subgroup of index
		six has Sylow subgroup of index $2$ in the $2$-Sylow subgroup of $G$,
		of index $3$ in the $3$-Sylow subgroup, and is the full Sylow
		subgroup for all other primes.  Because $\gcd(m,n)\mid6$, each
		of those other Sylow subgroups is cyclic.
		
		Write $a=\nu_2(m)$ and $b=\nu_2(n)$.  Both are positive, and
		$\min(a,b)=1$.  If $a=b=1$, the $2$-Sylow subgroup is
		$C_2\times C_2$ and has three subgroups of index two, all cyclic.
		Exactly two omit its diagonal element $(-1,-1)$.  If, say,
		$a>1=b$, the $2$-Sylow subgroup is $C_{2^a}\times C_2$.
		A kernel of a surjection to $C_2$ is cyclic exactly when the
		surjection is nontrivial on the second factor.  There are two
		such kernels, and both omit $(-1,-1)$; the case $b>1=a$ is
		symmetric.  Thus there are always exactly two admissible
		mixed choices for the $2$-primary component.
		
		For the $3$-primary component, the group is
		$C_{3^u}\times C_{3^v}$, and its required subgroup has index
		three.  If $\min(u,v)=0$, this group is cyclic and has exactly
		one subgroup of index three.  If $u=v=1$, it is
		$C_3\times C_3$ and has four subgroups of index three,
		all cyclic.  If, say, $u>1=v$, a kernel of a surjection onto
		$C_3$ is cyclic exactly when the surjection is nontrivial on
		the $C_3$ factor; there are three such kernels.  The opposite
		case is symmetric.  These are precisely the values of $M_3(p,q)$.
		
		The choices at distinct primes combine uniquely into a subgroup,
		and their direct product is cyclic because the Sylow orders are
		coprime.  The element $-1$ lies entirely in the $2$-primary
		component.  Multiplying the counts proves both formulas and
		also constructs a mixed cyclic subgroup whenever
		\eqref{eq:index6-moduli} holds.
	\end{proof}
	
	The same Sylow analysis gives an exact criterion in terms of
	the local components of a sextic character.
	
	\begin{proposition}[Local-character cyclicity criterion]
		\label{prop:index6-character-cyclicity}
		Let $\psi=\chi_p\chi_q$ be an odd Dirichlet character of exact
		order six modulo $pq$, with local characters $\chi_p$ and $\chi_q$.
		Put $u=\nu_3(p-1)$ and $v=\nu_3(q-1)$.
		Then $H=\ker\psi$ is cyclic if and only if
		\begin{enumerate}[label=\textup{(\roman*)}]
			\item $\gcd(p-1,q-1)\mid6$; and
			\item if $u,v>0$ and $u\ne v$, the local character at the prime
			for which the $3$-adic valuation is $1$ has order divisible by $3$.
		\end{enumerate}
	\end{proposition}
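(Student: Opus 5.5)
The plan is to reduce cyclicity of $H=\ker\psi$ to cyclicity of each of its Sylow subgroups, using the Sylow analysis from the proof of Theorem~\ref{thm:index6-structure}. Under the Chinese remainder theorem, $G\simeq C_m\times C_n$ with $m=p-1$, $n=q-1$, and $\psi=\chi_p\chi_q$ corresponds to a surjection $G\to C_6$. Since $\psi$ has exact order six, $[G:H]=6$. The character $\psi^3$ is quadratic and odd, and $\psi^2$ is cubic, i.e.\ of exact order three. For each prime $\ell$, the $\ell$-Sylow subgroup $H_\ell$ of $H$ is the kernel of the restriction of $\psi$ to the $\ell$-Sylow subgroup $G_\ell\simeq C_{\ell^{\nu_\ell(m)}}\times C_{\ell^{\nu_\ell(n)}}$. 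That restriction is trivial for $\ell>3$, is governed by $\psi^3$ for $\ell=2$, and by $\psi^2$ for $\ell=3$. Since a finite abelian group is cyclic exactly when all its Sylow subgroups are, it suffices to examine the three kinds of prime separately.

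\emph{Primes $\ell>3$.} Here $H_\ell=G_\ell$, which is cyclic iff $\ell$ does not divide both $m$ and $n$.

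\emph{The prime $2$.} Write $a=\nu_2(m)$ and $b=\nu_2(n)$, both positive. $H_2$ has index two in $C_{2^a}\times C_{2^b}$, so its order is $2^{a+b-1}$, while its exponent is at most $2^{\max(a,b)}$. Hence cyclicity forces $\min(a,b)=1$. Conversely, assume $\min(a,b)=1$. If $a=b=1$, every index-two subgroup of $C_2\times C_2$ is cyclic. If, say, $a>1=b$, the proof of Theorem~\ref{thm:index6-structure} shows the kernel is cyclic iff the quadratic character is nontrivial on the $C_2$ factor at $q$, i.e.\ iff $\psi^3$ involves $\eps_q$. Oddness supplies this automatically. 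Indeed, $p\equiv1\pmod4$ makes $\eps_p(-1)=1$, so $\psi^3(-1)=-1$ forces the $\eps_q$ factor to be present. The case $b>a=1$ is symmetric. So the $2$-part imposes exactly $\min(\nu_2(m),\nu_2(n))=1$.

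\emph{The prime $3$.} $H_3$ is the kernel of the nontrivial cubic character $\psi^2$ on $C_{3^u}\times C_{3^v}$. If $\min(u,v)=0$, the group is cyclic and so is $H_3$. If $u,v\ge2$, then $H_3$ has order $3^{u+v-1}>3^{\max(u,v)}$, so it is not cyclic. If $u=v=1$, all index-three subgroups of $C_3\times C_3$ are cyclic. If $u>1=v$ (or symmetrically), the kernel is cyclic iff the $3$-part of $\psi^2$ is nontrivial on the $C_3$ factor at the prime $q$ with valuation one. This means $\chi_q^2$ has order divisible by $3$, equivalently $3\mid\ord\chi_q$, which is condition (ii).

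Finally, assemble the pieces. The conditions ``no common prime $\ell>3$'', $\min(\nu_2(m),\nu_2(n))=1$ and $\min(u,v)\le1$ together say exactly that $\gcd(m,n)\mid6$, which is (i); the residual $3$-adic constraint is (ii). The step I expect to need the most care is the translation at $\ell=3$ (and similarly at $\ell=2$) between ``the kernel of the $\ell$-part of $\psi$ is cyclic'' and a statement about the local factor. One must check that the projection of $\psi^2$ to the $C_{3^v}$ factor with $v=1$ is nontrivial exactly when $3$ divides the order of $\chi_q$. I would verify this by writing $\psi$ on generators $g_p,g_q$ as in Proposition~\ref{prop:44cyclic} and reading off exponents.
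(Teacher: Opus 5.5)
Your proposal is correct and takes essentially the paper's route. Both arguments check cyclicity Sylow subgroup by Sylow subgroup: at $2$, oddness forces the quadratic part onto the local factor of $2$-adic valuation one, and at $3$, the $C_{3^u}\times C_3$ kernel analysis from the proof of Theorem~\ref{thm:index6-structure} yields condition (ii). The only minor difference is that you derive necessity of (i) locally, by comparing order and exponent at each prime, while the paper cites the global bound $|H|\mid\lcm(p-1,q-1)$; the two are equivalent.
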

	
	\begin{proof}
		Since $\psi$ has order six, its kernel has index six.
		The necessity of (i) follows from Theorem~\ref{thm:index6-structure}.
		In the $3$-Sylow subgroup, cyclicity is automatic if $\min(u,v)=0$
		or $u=v=1$.  If $u>1=v$, the preceding proof shows that the
		kernel is cyclic precisely when the order-three part of $\psi$
		is nontrivial on the $C_3$ factor belonging to $q$; this is
		equivalent to $3\mid\operatorname{ord}(\chi_q)$.  The case
		$v>1=u$ is symmetric.
		
		At the prime $2$, condition (i) gives
		$\min(\nu_2(p-1),\nu_2(q-1))=1$.  If both valuations are one,
		oddness of $\psi$ selects one of the two order-two kernels
		omitting $(-1,-1)$.  If only one valuation is one, oddness forces
		the order-two part of $\psi$ to be nontrivial on that smaller
		local factor, again making the kernel cyclic.  All other Sylow
		subgroups are cyclic under (i).  The two conditions are therefore
		sufficient as well.
	\end{proof}
	
	\begin{corollary}[Separated quadratic--cubic local type]
		\label{cor:index6-separated}
		Suppose $p\equiv3\pmod4$ and $q\equiv1\pmod3$, and let
		$\theta_q$ be any character of exact order three modulo $q$.
		For the odd sextic character $\psi=\varepsilon_p\theta_q$,
		the kernel $H$ is cyclic if and only if
		\begin{equation}\label{eq:index6-separated}
		\gcd\left(\frac{p-1}{2},\frac{q-1}{3}\right)=1.
		\end{equation}
	\end{corollary}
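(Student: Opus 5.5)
The plan is to bypass the general criterion of Proposition~\ref{prop:index6-character-cyclicity} and compute the kernel $H$ explicitly. Because the two local factors of $\psi$ have coprime orders $2$ and $3$, the kernel should split as a direct product under the Chinese remainder theorem. Cyclicity then reduces to the coprimality of two orders.

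\textbf{Oddness and local generators.} First, $\psi$ is odd and of exact order six. Indeed, $p\equiv3\pmod4$ gives $\varepsilon_p(-1)=-1$. Also $\theta_q(-1)=\theta_q((-1)^3)=\theta_q(-1)^3$ has odd order dividing $2$, so $\theta_q(-1)=1$. The order of $\psi$ is $\lcm(2,3)=6$. Choose a primitive root $g_p$ modulo $p$ and a primitive root $g_q$ modulo $q$. Then $\varepsilon_p(g_p)=-1$, and $\theta_q(g_q)=\omega$ is a primitive cube root of unity; replacing $\theta_q$ by $\overline{\theta_q}$ does not change the kernel, so this normalization is harmless.

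\textbf{Kernel computation.} For $(g_p^u,g_q^v)$ we get
\[
\psi(g_p^u,g_q^v)=(-1)^u\omega^v .
\]
This equals $1$ only if $(-1)^u=\omega^{-v}$. The left side has order dividing $2$ and the right side has order dividing $3$, so both sides must equal $1$. Hence $u$ is even and $3\mid v$, and therefore
\[
H=(\F_p^\times)^2\times(\F_q^\times)^3\simeq C_{(p-1)/2}\times C_{(q-1)/3}.
\]
As a check, $|H|=(p-1)(q-1)/6$, which agrees with index six.

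\textbf{Cyclicity.} A direct product of two cyclic groups is cyclic if and only if their orders are coprime, because its exponent is the $\lcm$ of the orders. This gives exactly \eqref{eq:index6-separated}.

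\textbf{Consistency check.} Optionally, one can confirm this against Proposition~\ref{prop:index6-character-cyclicity}. Since $(p-1)/2$ is odd, \eqref{eq:index6-separated} is equivalent to $\gcd(p-1,q-1)\mid 6$ together with the $3$-adic condition: the only possible failure is $3\mid p-1$ with $\nu_3(p-1)=1<\nu_3(q-1)$, where $\chi_p=\varepsilon_p$ has order prime to $3$. I do not expect any step to be a real obstacle. The only point needing care is the separation argument in the kernel computation, which uses only that $2$ and $3$ are coprime.
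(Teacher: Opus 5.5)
Your proof is correct and follows essentially the same route as the paper: because $\varepsilon_p$ and $\theta_q$ have coprime orders $2$ and $3$, the kernel splits under CRT as $\ker\varepsilon_p\times\ker\theta_q\simeq C_{(p-1)/2}\times C_{(q-1)/3}$, and a product of two cyclic groups is cyclic exactly when their orders are coprime. Your explicit exponent computation, the oddness check and the consistency check against Proposition~\ref{prop:index6-character-cyclicity} only spell out details that the paper's one-line proof leaves implicit.
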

	
	\begin{proof}
		The quadratic and cubic characters have coprime orders, so under
		CRT the kernel is
		$\ker\varepsilon_p\times\ker\theta_q
		\simeq C_{(p-1)/2}\times C_{(q-1)/3}$.
		Such a direct product is cyclic exactly when its two factors
		have coprime orders.
	\end{proof}
	\begin{remark}\label{rem:index6-census-compatibility}
		Theorem~\ref{thm:index6-structure} counts all cyclic index-six
		subgroups, including those with a trivial projection to one local
		factor.  By contrast, the census in Section~\ref{sec:experiments}
		retains only subgroups whose two local projection orders exceed one.
		For example, modulo $91=7\cdot13$ the theorem gives eight mixed
		cyclic subgroups of index six.  Direct enumeration shows that five
		are sum-balanced and three are not.  Thus the structural enumeration
		does not, by itself, decide the residue-sum equality.
	\end{remark}

        For reference, direct enumeration gives the following small table;
        the first two columns count all cyclic index-six subgroups and the
        last column counts the mixed ones that are sum-balanced.
        \begin{center}
        \begin{tabular}{c|r|r|r}
        $pq$ & all cyclic & mixed & mixed, sum-balanced\\ \hline
        $35$ & 2 & 2 & 1\\
        $91$ & 8 & 8 & 5\\
        $133$ & 9 & 6 & 2\\
        $217$ & 12 & 8 & 1
        \end{tabular}
        \end{center}
	
	We now turn to the residue-sum criterion for
	$[G:H]=6$,
	and assume that $-1\notin H$.  
	Since $H^\perp$ is an abelian group of squarefree order $6$, it is
	necessarily cyclic.  Write
	$H^\perp=\langle\psi\rangle\simeq C_6$.
	The image of $-1$ in $G/H$ is the unique element of order $2$.
	Since $\psi$ generates the character group of $G/H$, it takes the
	value $-1$ on this element.  Consequently,
	$\psi(-1)=-1$.
	Thus the odd characters in $H^\perp$ are precisely
	$\psi$, $\psi^3$, $\psi^5=\overline{\psi}$.

	\begin{theorem}[The index-six criterion]\label{thm:index6}
		Suppose that $[G:H]=6$ and $-1\notin H$, and let
		$H^\perp=\langle\psi\rangle$.
		Then
		\begin{equation}\label{eq:index6deviation}
		S_{pq}(a)-\frac{pq\,k}{2}
		=
		\frac13\Ree A_{pq}(\psi)
		+\frac16 A_{pq}(\psi^3).
		\end{equation}
		In particular,
		\begin{equation}\label{eq:index6criterion}
		S_{pq}(a)=\frac{pq\,k}{2}
		\iff
		2\Ree A_{pq}(\psi)+A_{pq}(\psi^3)=0.
		\end{equation}
		Equivalently,
		\[
		S_{pq}(a)=\frac{pq\,k}{2}
		\iff
		2\Ree L(0,\psi)+L(0,\psi^3)=0.
		\]
	\end{theorem}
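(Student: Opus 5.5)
The argument applies Theorem~\ref{thm:character-criterion} directly with $d=6$ and then collects the odd characters of $H^\perp=\langle\psi\rangle\simeq C_6$. We showed just before the statement that $\psi(-1)=-1$. Since $\psi^j(-1)=(-1)^j$, the odd characters in $H^\perp$ are exactly $\psi,\psi^3,\psi^5$, and the even ones are $1,\psi^2,\psi^4$. Equation~\eqref{eq:character-criterion} therefore gives
\[
S_{pq}(a)-\frac{pq\,k}{2}=\frac16\bigl(A_{pq}(\psi)+A_{pq}(\psi^3)+A_{pq}(\psi^5)\bigr).
\]
The even nonprincipal characters have already been removed there by Lemma~\ref{lem:even-char}.

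Next, we simplify the two conjugate terms. Because $\psi^5=\overline\psi$ and the weights $n$ are real, $A_{pq}(\overline\psi)=\overline{A_{pq}(\psi)}$. Hence
\[
A_{pq}(\psi)+A_{pq}(\psi^5)=2\Ree A_{pq}(\psi),
\]
and dividing by $6$ yields \eqref{eq:index6deviation}. The character $\psi^3$ is real, being quadratic, so $A_{pq}(\psi^3)$ is a real integer and the right-hand side is real, as it must be. Multiplying by $6$ shows that the deviation vanishes if and only if $2\Ree A_{pq}(\psi)+A_{pq}(\psi^3)=0$, which is \eqref{eq:index6criterion}.

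For the $L$-value form, we use $A_{pq}(\chi)=-pq\,L(0,\chi)$ for nonprincipal $\chi$ modulo $pq$, in the normalization recalled before Lemma~\ref{lem:even-char}. The sum defining $A_{pq}$ over $1\le n<pq$ with $(n,pq)=1$ agrees with $\sum_{n=1}^{pq}n\chi(n)$, because $\chi$ vanishes on nonunits. Both $\psi$ and $\psi^3$ are nonprincipal, since they are odd. Substituting and dividing by the nonzero constant $-pq$ gives $2\Ree L(0,\psi)+L(0,\psi^3)=0$. There is no real obstacle here. The only point needing care is the parity bookkeeping, namely identifying exactly which powers of $\psi$ are odd, and that was settled in the paragraph preceding the theorem.
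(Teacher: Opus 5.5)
Your proposal is correct and follows essentially the same route as the paper: apply Theorem~\ref{thm:character-criterion} with $d=6$, identify $\psi,\psi^3,\psi^5$ as the odd characters, combine the conjugate pair into $2\Ree A_{pq}(\psi)$, and convert via $A_{pq}(\chi)=-pq\,L(0,\chi)$. Your extra remarks are correct details that the paper leaves implicit: the odd powers are nonprincipal, $A_{pq}(\psi^3)$ is real, and the $n=pq$ term vanishes.
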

	
	\begin{proof}
		By Theorem~\ref{thm:character-criterion},
		\[
		S_{pq}(a)-\frac{pq\,k}{2}
		=
		\frac16
		\left(
		A_{pq}(\psi)
		+
		A_{pq}(\psi^3)
		+
		A_{pq}(\psi^5)
		\right).
		\]
		Since
		$\psi^5=\overline{\psi}$
		and
		$A_{pq}(\overline{\psi}) = \overline{A_{pq}(\psi)}$,
		we obtain
		\[
		A_{pq}(\psi)+A_{pq}(\psi^5)
		=
		2\Ree A_{pq}(\psi),
		\]
		which gives \eqref{eq:index6deviation}.  The criterion
		\eqref{eq:index6criterion} follows immediately.  Finally, using
		$A_{pq}(\chi)=-pq\,L(0,\chi)$
		for each nonprincipal character involved gives the equivalent
		$L$-value formulation.
	\end{proof}
	
	The character
	$\eta:=\psi^3$
	is the unique nontrivial quadratic character in $H^\perp$.  Since
	$\psi(-1)=-1$, it is also odd.  Hence
	$\eta\in\{\eps_p,\eps_q,\eps_p\eps_q\}$.
	The quadratic contribution in \eqref{eq:index6criterion} can therefore
	be made completely explicit.
	
	\begin{corollary}\label{cor:index6quadratic}
		Under the hypotheses of Theorem~\ref{thm:index6}, the following hold.
		\begin{enumerate}[label=\textup{(\roman*)}]
			\item If $\psi^3=\eps_p$, then
			\[
			S_{pq}(a)=\frac{pq\,k}{2}
			\qquad
			\text{if and only if}
			\qquad
			2\Ree A_{pq}(\psi)
			+
			q\left(1-\left(\frac qp\right)\right)A_p(\eps_p)
			=0.
			\]
			
			\item If $\psi^3=\eps_q$, then
			\[
			S_{pq}(a)=\frac{pq\,k}{2}
			\qquad
			\text{if and only if}
			\qquad
			2\Ree A_{pq}(\psi)
			+
			p\left(1-\left(\frac pq\right)\right)A_q(\eps_q)
			=0.
			\]
			
			\item If $\psi^3=\eps_p\eps_q$, then
			\[
			S_{pq}(a)=\frac{pq\,k}{2}
			\qquad
			\text{if and only if}
			\qquad
			2\Ree A_{pq}(\psi)
			-pq\,L(0,\eps_p\eps_q)
			=0.
			\]
		\end{enumerate}
	\end{corollary}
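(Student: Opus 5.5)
The proof is a direct substitution into Theorem~\ref{thm:index6}. By \eqref{eq:index6criterion}, sum-balance is equivalent to
\[
2\Ree A_{pq}(\psi)+A_{pq}(\psi^3)=0.
\]
So the only work is to write $A_{pq}(\psi^3)$ explicitly in each of the three possibilities for $\eta=\psi^3\in\{\eps_p,\eps_q,\eps_p\eps_q\}$, which was just established from the oddness of $\psi$.

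\textbf{Case (i).} Here $\eta=\eps_p$ is the character modulo $pq$ induced from the primitive quadratic character modulo $p$. Apply Lemma~\ref{lem:induced-moment} with $r=p$ and $\ell=q$. Since $\chi(\ell)=\left(\frac qp\right)$, this gives
\[
A_{pq}(\eps_p)=q\left(1-\left(\frac qp\right)\right)A_p(\eps_p).
\]
Substituting into the criterion yields (i).

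\textbf{Case (ii).} This is the same argument with the roles of $p$ and $q$ interchanged.

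\textbf{Case (iii).} Now $\eta=\eps_p\eps_q$ is primitive modulo $pq$ and nonprincipal. The normalization $A_N(\chi)=-N\,L(0,\chi)$ recalled before Lemma~\ref{lem:even-char} gives
\[
A_{pq}(\eps_p\eps_q)=-pq\,L(0,\eps_p\eps_q).
\]
Substituting this gives (iii). One may add, as in the proof of Theorem~\ref{thm:index2}(iii), that oddness forces $pq\equiv3\pmod4$, so that $L(0,\eps_p\eps_q)=2h(-pq)/w(-pq)$ by \eqref{eq:quadratic-moment}. This refinement is not needed for the equivalence itself.

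There is no real obstacle. The only point to check is that in cases (i) and (ii) the quadratic factor is genuinely the induced character, so that Lemma~\ref{lem:induced-moment} applies. This holds by the stated convention that $\eps_p$ and $\eps_q$ in sums modulo $pq$ denote the induced characters extended by zero. It also requires $\eps_p$ to be nonprincipal and primitive modulo $p$, which is immediate.
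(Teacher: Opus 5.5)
Your proposal is correct and is essentially the paper's own proof. Like the paper, you substitute into \eqref{eq:index6criterion}, evaluate $A_{pq}(\eps_p)$ and $A_{pq}(\eps_q)$ via Lemma~\ref{lem:induced-moment}, and evaluate $A_{pq}(\eps_p\eps_q)=-pq\,L(0,\eps_p\eps_q)$ via the normalization $A_N(\chi)=-N\,L(0,\chi)$.
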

	
	\begin{proof}
		The formulas follow from \eqref{eq:index6criterion} and
		Lemma~\ref{lem:induced-moment}, together with
		\[
		A_{pq}(\eps_p)
		=
		q\left(1-\left(\frac qp\right)\right)A_p(\eps_p),
		\]
		\[
		A_{pq}(\eps_q)
		=
		p\left(1-\left(\frac pq\right)\right)A_q(\eps_q),
		\]
		and
		\[
		A_{pq}(\eps_p\eps_q)
		=
		-pq\,L(0,\eps_p\eps_q).
		\]
	\end{proof}
	
	\begin{remark}
		The index-six case exhibits a new feature.  At index $2$ the deviation
		is governed by a single quadratic character, whereas at index $6$ the
		quadratic contribution may cancel the real part of the contribution
		from a character of order $6$.  Thus the nonvanishing of
		$A_{pq}(\psi^3)$ alone does not exclude the sum-balanced identity.
	\end{remark}
	
	\begin{example}\label{ex:index6-cancellation}
		Let $\zeta=e^{\pi i/3}$, and let $\theta$ be the character modulo $7$
		defined by $\theta(3)=\zeta$. The primitive character
		$\psi=\eps_5\theta$ modulo $35$ is odd and has order $6$.
		Since $13\equiv3\pmod5$ and $13\equiv6=3^3\pmod7$, we have
		$\psi(13)=1$. Its kernel has order $4$, and hence
		\[
		\ker\psi=\langle13\rangle=\{1,13,29,27\}.
		\]
		The first moments can be evaluated by grouping the unit
		representatives according to their character values:
		\[
		\begin{array}{c|rrrrrr}
		\psi(n)&1&\zeta&\zeta^2&-1&\zeta^4&\zeta^5\\ \hline
		\displaystyle\sum_{\substack{1\le n<35\\\psi(n)=\zeta^j}}n
		&70&105&70&70&35&70
		\end{array}
		\]
		Here the columns are indexed by $j=0,\ldots,5$. Thus
		\[
		A_{35}(\psi)=70\zeta=35+35\sqrt3\,i,
		\qquad A_{35}(\psi^3)=-70,
		\]
		so the two nonzero terms cancel:
		\[
		2\Ree A_{35}(\psi)+A_{35}(\psi^3)=0.
		\]
		Accordingly,
		$S_{35}(13)=13+29+27+1=70=35\cdot4/2$.
		The local base orders are $(4,2)$, the half-order power is $29$,
		and the subgroup index is $6$; the local character orders are $(2,6)$.
	\end{example}
	
	\begin{example}
		The modulus
		\[
		91=7\cdot13
		\]
		already exhibits both possibilities at index $6$.  For $a=2$ and
		$a=11$ one has
		\[
		\ord_7(a)=3,\qquad \ord_{13}(a)=12,
		\qquad k=12,
		\]
		and in both cases
		\[
		a^{k/2}\equiv(1,-1)\pmod{(7,13)}.
		\]
		Moreover,
		\[
		[(\Z/91\Z)^\times:\langle a\rangle]
		=
		\frac{72}{12}=6.
		\]
		Nevertheless,
		\[
		S_{91}(2)=364\ne546=\frac{91\cdot12}{2},
		\]
		whereas
		\[
		S_{91}(11)=546=\frac{91\cdot12}{2}.
		\]
		Thus, even at fixed index $6$, the local orders and the mixed
		half-order sign pattern do not determine the sum-balanced property.
	\end{example}

\begin{theorem}[An infinite sextic cancellation family]\label{thm:sextic19family}
Let $q>3$ be prime with $q\equiv2\pmod3$, and put
\[
 K=\{1,7,11\}\le\F_{19}^\times,\qquad
 H=K\times\F_q^\times\le(\Z/19q\Z)^\times.
\]
Then $H$ is cyclic, omits $-1$, and has index six and order $3(q-1)$.
Every generator $a$ satisfies $(a-1,19q)=1$.
Let $\zeta=e^{\pi i/3}$, normalize the sextic character $\chi$ modulo
$19$ by $\chi(2)=\zeta$, and write $\chi(q)=\zeta^j$.
Then
\begin{equation}\label{eq:sextic19deviation}
 S_{19q}(a)-\frac{19q\,3(q-1)}2
 =\begin{cases}0,&j\in\{0,4,5\},\\-19q,&j\in\{1,2,3\}.
 \end{cases}
\end{equation}
Every coset is centered exactly when $j=0$.
If $j=5$, the character $\widetilde\chi$ induced modulo $19q$ satisfies
\begin{equation}\label{eq:sextic19moments}
 A_{19q}(\widetilde\chi)=38q(1-\zeta),\qquad
 A_{19q}(\widetilde\chi^{\,3})=-38q.
\end{equation}
Both moments are nonzero and cancel in the index-six criterion.
In particular, every prime $q\equiv29\pmod{57}$ gives such a
cancellation, so there are infinitely many examples.
\end{theorem}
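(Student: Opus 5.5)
The plan follows the proof of Theorem~\ref{thm:all-imprimitive-quartic}: first settle the group structure, then compute the two local odd moments at conductor $19$ from coset sums, then lift them to modulus $19q$ with Lemma~\ref{lem:induced-moment} and feed them into Theorem~\ref{thm:index6}.

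\textbf{Structure.} Since $7^3=343\equiv1\pmod{19}$, the set $K$ is the subgroup of order $3$. It is the kernel of the sextic character $\chi$ with $\chi(2)=\zeta$, since $2$ is a primitive root modulo $19$. The hypothesis $q\equiv2\pmod3$ gives $\gcd(3,q-1)=1$, so $H=K\times\F_q^\times$ is cyclic by the direct-product criterion. Its order is $3(q-1)$ and its index is $18(q-1)/(3(q-1))=6$. Because $|K|$ is odd, $-1\notin K$, hence $-1\notin H$. A generator $a$ has local projection orders $3$ and $q-1$, both exceeding one, so $a\not\equiv1$ modulo $19$ and modulo $q$; this gives $(a-1,19q)=1$. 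Finally $H=\ker\widetilde\chi$, so $H^\perp=\langle\widetilde\chi\rangle$, and Theorem~\ref{thm:index6} applies with $\psi=\widetilde\chi$.

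\textbf{Local moments.} I list the six cosets $2^jK$ for $j=0,\dots,5$:
\[
\{1,7,11\},\ \{2,14,3\},\ \{4,9,6\},\ \{8,18,12\},\ \{16,17,5\},\ \{13,15,10\}.
\]
Their least-residue sums are $19,19,19,38,38,38$. Therefore
\[
A_{19}(\chi)=19(1+\zeta+\zeta^2)(1+2\zeta^3)=-19(1+\zeta+\zeta^2).
\]
Using $\zeta^2=\zeta-1$, this equals $-38\zeta$. Similarly, $\chi^3$ takes the value $(-1)^j$ on $2^jK$, so $A_{19}(\chi^3)=19-19+19-38+38-38=-19$.

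\textbf{Lifting and the deviation.} By Lemma~\ref{lem:induced-moment},
\[
A_{19q}(\widetilde\chi)=-38q\zeta(1-\zeta^j),\qquad A_{19q}(\widetilde\chi^{\,3})=-19q\bigl(1-(-1)^j\bigr).
\]
Substituting into \eqref{eq:index6deviation}, the deviation is
\[
\tfrac13\Ree\bigl(-38q\zeta(1-\zeta^j)\bigr)-\tfrac{19q}{6}\bigl(1-(-1)^j\bigr).
\]
I then evaluate this for the six values of $j$; this finite table is the only computational step. For example, $j=5$ gives $-38q\zeta(1-\zeta^5)=38q(1-\zeta)$, whose real part is $19q$. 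Adding $\tfrac13\cdot19q$ and $-\tfrac{19q}{3}$ gives $0$, which recovers \eqref{eq:sextic19moments} together with the cancellation. The other cases are equally short and should give $0$ for $j\in\{0,4\}$ and $-19q$ for $j\in\{1,2,3\}$.

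\textbf{Cosets and infinitude.} As in Theorem~\ref{thm:all-imprimitive-quartic}, every coset is centered if and only if all odd moments in $H^\perp$ vanish, by character orthogonality. Since $A_{19}(\chi)\ne0$, this happens exactly when $\zeta^j=1$, that is, $j=0$. For the infinite family, $q\equiv29\pmod{57}$ gives $q\equiv2\pmod3$ and $q\equiv10\pmod{19}$. Since $2^{17}\equiv2^{-1}\equiv10\pmod{19}$, we have $\chi(10)=\zeta^{17}=\zeta^5$, so $j=5$. The progression is reduced because $\gcd(29,57)=1$, and Dirichlet's theorem then gives infinitely many such primes.

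The main obstacle is bookkeeping rather than any idea: the coset sums and the orientation of the normalization $\chi(2)=\zeta$ must be exactly right, since swapping $\zeta$ and $\bar\zeta$ would exchange the balanced classes $j=4,5$ with $j=2,1$. I will cross-check the $j=5$ case numerically, for instance with $q=29$ and $N=551$, before writing out the table.
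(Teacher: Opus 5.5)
Your proposal is correct, and the six cases you left implicit do come out as claimed. For example, $j=2$ gives $\frac13\Ree\bigl(-38q(1+\zeta)\bigr)=-19q$ with zero quadratic term, and $j=4$ gives a purely imaginary sextic moment and again a zero quadratic term. For the converse at $j=0$ in the coset claim, note that $-19q(1-(-1)^j)$ also vanishes there.

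The paper shares your structural argument, the coset table, the local moments $A_{19}(\chi)=-38\zeta$ and $A_{19}(\chi^3)=-19$, and the lifting via Lemma~\ref{lem:induced-moment}. It derives \eqref{eq:sextic19deviation} differently, however.
\begin{itemize}
\item It applies the elementary Lemma~\ref{lem:coset-lifting}, which gives the deviation of $H$ directly as $q(s_0-s_{-j})$. The sign pattern is then read off from whether $q^{-1}K$ is a coset with sum $19$ or $38$.
\item The statement about all cosets becomes the fact that $(19,19,19,38,38,38)$ has no nonzero rotational period.
\item The lifted moments are computed only to exhibit the cancellation at $j=5$.
\end{itemize}

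Your route sends everything through Theorem~\ref{thm:index6}. As a result, your deviation table depends on the character machinery (the $L$-value normalization behind Lemma~\ref{lem:induced-moment}) and on a six-case evaluation. The paper's table needs neither.

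In exchange, your route gets the deviation table, the moments \eqref{eq:sextic19moments} and their cancellation from a single computation. Your coset argument (all cosets are centered iff all odd moments in $H^\perp$ vanish) is also the general orthogonality principle, not a periodicity check specific to this example.
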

\begin{proof}
The product $C_3\times C_{q-1}$ is cyclic because $(3,q-1)=1$.
It omits $-1$, and its generators have both local orders greater
than one. The six local cosets and their sums are
\[
\begin{array}{c|c|c}
 j&2^jK&s_j\\ \hline
 0&\{1,7,11\}&19\\
 1&\{2,3,14\}&19\\
 2&\{4,6,9\}&19\\
 3&\{8,12,18\}&38\\
 4&\{5,16,17\}&38\\
 5&\{10,13,15\}&38
\end{array}
\]
Lemma~\ref{lem:coset-lifting} gives the deviation of $H$ as
$q(s_0-s_{-j})$, with subscripts taken modulo six. This proves
\eqref{eq:sextic19deviation}. The vector $(s_0,\ldots,s_5)$ has
no nonzero rotational period; the same lemma shows that all lifted
cosets are centered precisely when $j=0$.

The six sums also give
$A_{19}(\chi)=-38\zeta$ and $A_{19}(\chi^3)=-19$.
By Lemma~\ref{lem:induced-moment},
\[
 A_{19q}(\widetilde\chi)=-38q(1-\zeta^j)\zeta,\qquad
 A_{19q}(\widetilde\chi^{\,3})=-19q(1-(-1)^j).
\]
For $j=5$, the identities $\zeta^5=1-\zeta$ and
$\zeta^2=\zeta-1$ yield \eqref{eq:sextic19moments}, and hence
\[
 2\Ree A_{19q}(\widetilde\chi)+A_{19q}(\widetilde\chi^{\,3})
 =38q-38q=0.
\]
Finally, $q\equiv29\pmod{57}$ implies $q\equiv2\pmod3$ and
$q\equiv10\pmod{19}$, with $10\in2^5K$.
Since $(29,57)=1$, Dirichlet's theorem supplies infinitely many
primes in this progression.
\end{proof}

When $j=5$, the six cosets $2^\ell H$, $\ell=0,\ldots,5$, have
deviations $19q(0,0,-1,0,0,1)$, so four are centered.
The first prime in the progression is $q=29$. The element $a=11$
generates $H$ modulo $551$, with
\[
 \ord_{551}(11)=84,\qquad S_{551}(11)=23142=\frac{551\cdot84}{2}.
\]
This infinite family has an imprimitive sextic annihilator; it does
not establish infinitude for primitive sextic annihilators such as
the one in Example~\ref{ex:index6-cancellation}.

	\section{Computational evidence}\label{sec:experiments}

	The computational supplement contains
    \nolinkurl{residue_sum_census.py}, \nolinkurl{data/subgroups.csv.gz},
    and \nolinkurl{data/checks.json}. The dependency-free Python~3 program,
    run with \texttt{--limit 10000 --output data}, regenerates one record
    per eligible cyclic subgroup, including its least positive generator,
    local orders, index, subgroup sum, and, at index four, all four coset
    sums and their balance test. It checks both counting identities below
    and the predicted multiplicity $\varphi(\gcd(u,v))$ for each pair of
    local orders. The companion \nolinkurl{verify_v8.py} checks the
    structural and arithmetic statements over the larger ranges recorded
    in \nolinkurl{verification-results.json}.
The additional \nolinkurl{verify_v12.py} checks the general lifting
identity on $6\,948$ cosets at distinct odd primes at most $43$;
all $1\,023$ lifts with quartic conductor $5<r\le500$, $r\equiv5\pmod8$,
and odd second prime $q\le200$, $q\ne r$; all $2\,346$ primitive odd
quartic characters with $pq\le10000$; and the $337$ primes
$3<q\le5000$ in the sextic family's range. The primitive checks include
$398$ character cases with noncyclic kernel, representing $199$ distinct
noncyclic kernels. These are exact integer computations, recorded
in \nolinkurl{v12-results.json}; the class-number values there are
obtained from the classical formula, not from a separate class-group
computation. The package includes \nolinkurl{archive-reference.json},
reproduction commands, software versions, and checksums.
	The separate Lean formalizations cited in the introduction have the
	more limited scope stated there.

	\subsection{Subgroup census and validation}
	
	We enumerate all distinct odd-prime products $p<q$, $pq\le10000$,
	and all cyclic subgroups $H$ of even order whose generators satisfy
	$(a-1,pq)=1$. There are $1932$ prime products and $75\,090$ eligible
	subgroups. Eligibility is a subgroup property: $(a-1,pq)=1$ is equivalent
	to both local orders being greater than one. Each subgroup is represented
	by its least positive generator, and its residue sum includes the final
	power $a^k\equiv1$.
	
	Table~\ref{tab:index-counts} counts each subgroup once. All $21\,031$
	subgroups containing $-1$ are sum-balanced. Among the $54\,059$ mixed
	subgroups, $11\,988$ are sum-balanced and $42\,071$ are unbalanced.
	Thus the centered identity occurs in about $22.2\%$ of the mixed
	subgroups in this range. Counting their eligible generators instead gives
	$817\,158$ balanced and $4\,054\,994$ unbalanced mixed bases, or about
	$16.8\%$ balanced. The difference reflects the weight $\varphi(k)$
	attached to a subgroup of order $k$; the base totals are
	$1\,311\,354$ direct and $6\,183\,506$ overall. Both counting conventions
	are provided in the supplement, with full tables at every index.
	
	\begin{table}[htbp]
		\centering
		\begin{tabular}{c|r|r|r|r}
			$d$ & Direct & Mixed, balanced & Mixed, unbalanced & Total\\ \hline
			$2$ & $470$ & $610$ & $1414$ & $2494$\\
			$4$ & $721$ & $244$ & $1840$ & $2805$\\
			$6$ & $626$ & $260$ & $2447$ & $3333$\\
			$8$ & $412$ & $145$ & $1180$ & $1737$\\ \hline
			All & $21\,031$ & $11\,988$ & $42\,071$ & $75\,090$
		\end{tabular}
		\caption{Cyclic subgroup counts for distinct odd primes $p<q$,
			$pq\le10000$, even order, and local orders greater than one.
			Here $d=[G:H]$ and ``direct'' means $-1\in H$.}
		\label{tab:index-counts}
	\end{table}
	
	Two independent coverage formulas check the enumeration. Write
	$\operatorname{odd}(n)=n/2^{\nu_2(n)}$. The number of eligible even-order
	bases is
	\[
	\begin{split}
	&\sum_{\substack{p<q,\ pq\le10000\\p,q\text{ odd primes}}}
	\Bigl[(p-2)(q-2)
	-(\operatorname{odd}(p-1)-1)(\operatorname{odd}(q-1)-1)\Bigr]\\
	&=6\,183\,506.
	\end{split}
	\]
	Indeed, CRT leaves $(p-2)(q-2)$ allowed coordinates, and the subtracted
	term counts those with both local orders odd. Separately,
	Lemma~\ref{lem:linkage} gives the number of eligible cyclic subgroups as
	\[
	\sum_{\substack{p<q,\ pq\le10000\\p,q\text{ odd primes}}}
	\ \sum_{\substack{u\mid p-1,\ v\mid q-1\\u,v>1,\ 2\mid\lcm(u,v)}}
	\varphi(\gcd(u,v))=75\,090.
	\]
	The program checks both formulas and verifies the linkage count within
	every represented pair of local orders.
	The analogous divisor-sum count when
		$(a-1,pq)=1$ is removed is $93\,815$ even cyclic subgroups; this
		broader total is also calculated by the program, but the accompanying
		CSV is restricted to the $75\,090$ eligible subgroups.
	
	At index four, the aggregate count is clarified by the local character
	types in Table~\ref{tab:quartic-counts}. For $H^\perp\simeq C_4$, a
	generator $\psi=\chi_p\chi_q$ and its conjugate have the same local
	orders, so this type is attached to the subgroup. The table combines
	types obtained by interchanging the primes; the data retain their
	orientation $p<q$.
	
	\begin{table}[htbp]
		\centering
		\begin{tabular}{c|c|r|r|r|r}
			$H^\perp$ & Local type & Direct & Mixed, balanced & Mixed, unbalanced & Total\\ \hline
			$C_2\times C_2$ & --- & $0$ & $0$ & $545$ & $545$\\
			$C_4$ & $(4,1),(1,4)$ & $0$ & $207$ & $358$ & $565$\\
			$C_4$ & $(4,2),(2,4)$ & $479$ & $28$ & $656$ & $1163$\\
			$C_4$ & $(4,4)$ & $242$ & $9$ & $281$ & $532$\\ \hline
			& All & $721$ & $244$ & $1840$ & $2805$
		\end{tabular}
		\caption{The eligible index-four subgroups, stratified by annihilator
			and the local orders of its quartic generator.}
		\label{tab:quartic-counts}
	\end{table}
	
	All $545$ biquadratic cases are unbalanced, as
	Proposition~\ref{prop:V4} predicts. Of the $244$ mixed sum-balanced
	index-four subgroups, $207$ are imprimitive, $28$ have primitive local
	type $(4,2)$ or $(2,4)$, and $9$ have type $(4,4)$. Thus most equalities
	at this index in the stated range are classified by Theorem~\ref{thm:all-imprimitive-quartic}. The primitive cases are less frequent and
	require the partial-sum conditions of Section~5. Unlike the fixed-generator distribution studied by Engberg \cite{Engberg2014}, these are exact counts of eligible cyclic subgroups over a bounded range of prime products, and they do not imply an asymptotic density.

	\subsection{Subgroup balance versus balance of all cosets}
	
	The distinction between a centered sum over $H$ and centered sums
	over \emph{all} cosets can be made exact at index four.
	
	\begin{proposition}[Coset-moment pattern at index four]
		\label{prop:index4-coset-pattern}
		Let $H\le G$ have index four and $-1\notin H$, with
		$H^\perp=\langle\psi\rangle\simeq C_4$. Choose $g\in G$ such that
		$\psi(g)=i$, and put $A=A_{pq}(\psi)$. For $j=0,1,2,3$,
		\begin{equation}\label{eq:index4-coset-pattern}
		\sum_{h\in g^jH}\Res{h}{pq}-\frac{pq|H|}{2}
		=\frac12\Ree\bigl((-i)^j A\bigr).
		\end{equation}
		If $H$ is sum-balanced, then $A=it$ for some real $t$, and the
		four deviations in \eqref{eq:index4-coset-pattern} are
		\[
		\left(0,\frac t2,0,-\frac t2\right).
		\]
		Consequently, such a sum-balanced subgroup of index four
		has every coset sum-balanced if and only if $A_{pq}(\psi)=0$.
	\end{proposition}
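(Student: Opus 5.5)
The plan is to repeat the orthogonality argument of Theorem~\ref{thm:character-criterion}, this time with the indicator function of a coset. For $x\in G$ one has
\[
1_{g^jH}(x)=\frac14\sum_{\ell=0}^{3}\overline{\psi^\ell(g^j)}\,\psi^\ell(x)=\frac14\sum_{\ell=0}^{3}(-i)^{j\ell}\psi^\ell(x),
\]
since $H^\perp=\{1,\psi,\psi^2,\psi^3\}$ and $\psi(g)=i$. Multiplying by $\Res{x}{pq}$ and summing over $G$ expresses the coset sum as $\frac14\sum_\ell (-i)^{j\ell}A_{pq}(\psi^\ell)$. The $\ell=0$ term is $\frac14\cdot\frac{pq\,\varphi(pq)}{2}=\frac{pq|H|}{2}$ by Lemma~\ref{lem:even-char}. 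The $\ell=2$ term vanishes because $\psi^2$ is even and nonprincipal; it is even because $\psi(-1)=-1$, as shown at the start of the cyclic-annihilator subsection.

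The remaining terms are $\ell=1$ and $\ell=3$, with $\psi^3=\overline\psi$. They give
\[
\tfrac14\bigl((-i)^jA+i^j\overline A\bigr)=\tfrac14\bigl(z+\overline z\bigr),\qquad z=(-i)^jA,
\]
using $A_{pq}(\overline\psi)=\overline{A}$. This sum equals $\frac12\Ree((-i)^jA)$, which proves \eqref{eq:index4-coset-pattern}. The one point to check carefully is that $\overline{(-i)^j}=i^j$ pairs with $\overline A$; this holds, so there is no real obstacle.

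For the second part, note that sum-balance of $H$ is the case $j=0$, namely $\Ree A=0$, so $A=it$ with $t$ real. Substituting into the formula for $j=0,1,2,3$ gives
\[
\tfrac12\Ree(it),\quad \tfrac12\Ree(t),\quad \tfrac12\Ree(-it),\quad \tfrac12\Ree(-t),
\]
that is, $(0,t/2,0,-t/2)$. Hence every coset is centered if and only if $t=0$, which is equivalent to $A_{pq}(\psi)=0$. This matches the general Pomerance--Ulmer comparison made after Theorem~\ref{thm:character-criterion}.
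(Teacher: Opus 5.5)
Your proposal is correct and follows the paper's proof essentially verbatim. Both arguments apply orthogonality to the indicator of $g^jH$: the principal term gives $pq|H|/2$, the even nonprincipal $\psi^2$ contributes nothing, and the pair $\psi,\overline\psi$ combines to $\frac12\Ree((-i)^jA)$. The case $j=0$ then yields $A=it$ and the pattern $(0,t/2,0,-t/2)$. You also make explicit two points the paper leaves tacit: why $\psi^2$ is even, and the bookkeeping $(-i)^{3j}=i^j$. The only remaining tacit fact is that the $g^jH$, $j=0,\dots,3$, exhaust the cosets, which holds because $\psi$ maps $G/H$ isomorphically onto the fourth roots of unity.
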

	
	\begin{proof}
		Apply character orthogonality to the indicator of $g^jH$.
		The principal character contributes $pq|H|/2$, the nonprincipal
		even character $\psi^2$ has zero first moment, and the two odd
		characters $\psi$ and $\overline\psi$ yield
		\[
		\frac14\left(\overline{\psi(g^j)}A
		+\psi(g^j)\overline A\right)
		=\frac12\Ree\bigl((-i)^jA\bigr).
		\]
		Taking $j=0$ gives $\Ree A=0$ precisely when $H$ is
		sum-balanced. The remaining assertions follow.
	\end{proof}
	
	The refined census in Table~\ref{tab:coset-counts} checks
	all four coset sums directly, independently of the
	character-based test. Among the $244$ mixed sum-balanced index-four
	subgroups, $143$ have every coset centered, while $101$ have
	$A_{pq}(\psi)\ne0$ and hence a noncentered coset. Every mixed sum-balanced subgroup with a primitive quartic annihilator
    belongs to the second class, for every modulus in our setting.
    Indeed, \cite[Lemma~2.2]{PomeranceUlmer} gives a nonzero half-interval
    sum for a primitive odd character, and the relation following
    Theorem~\ref{thm:character-criterion} implies $A_{pq}(\psi)\ne0$.
    Proposition~\ref{prop:index4-coset-pattern} therefore rules out
    centering every coset. In particular, all $37$ primitive balanced
    cases in the census have exactly two centered cosets.
	
    \begin{corollary}[Primitive odd quartic obstruction]
    \label{cor:primitive-odd-quartic-obstruction}
    If $\psi$ is primitive and odd of order four modulo $pq$, then
    $\ker\psi$ is not coset-balanced. If its residue sum is centered,
    exactly two of its four cosets have centered residue sums.
    \end{corollary}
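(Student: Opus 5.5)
The plan is to reduce both assertions to the nonvanishing of the single first moment $A=A_{pq}(\psi)$, and then read off the coset pattern from Proposition~\ref{prop:index4-coset-pattern}. Set $H=\ker\psi$. Because $\psi$ has exact order four, $G/H$ is cyclic of order four and $H^\perp=\langle\psi\rangle\simeq C_4$. Because $\psi$ is odd, $\psi(-1)=-1$, so $-1\notin H$. These are exactly the hypotheses of Proposition~\ref{prop:index4-coset-pattern}. That proposition does not need $H$ to be cyclic: its proof uses only character orthogonality on $G/H$. So I will invoke it without any cyclicity hypothesis, matching the statement of the corollary.

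The key step is to show $A\ne0$. The quickest route uses the comparison after Theorem~\ref{thm:character-criterion}, which is valid for odd $N=pq$:
\[
c_\psi=\bigl(\psi(2)^{-1}-2\bigr)\frac{A}{pq}.
\]
Hence $A\ne0$ as soon as the half-interval sum $c_\psi$ is nonzero, and for a primitive odd character this is \cite[Lemma~2.2]{PomeranceUlmer}. As an independent check, I would also use Proposition~\ref{prop:class-number-obstruction}. Writing $A/(2N)=x+iy$, it gives $h^-(F)=2(x^2+y^2)$. Since the relative class number is a positive integer, $x+iy\ne0$, so again $A\ne0$. I expect this nonvanishing step to be the only substantive one; everything else is bookkeeping. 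The class-number route relies on the constants $Q_F=1$ and $w_F=2$, which that proposition already verified for conductor $pq$.

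With $A\ne0$ in hand, I apply the final clause of Proposition~\ref{prop:index4-coset-pattern}: every coset is centered exactly when $A=0$. So $H$ is never coset-balanced. If in addition $H$ is sum-balanced, the same proposition gives $A=it$ with $t$ real. Since $A\ne0$, we have $t\ne0$, and the four coset deviations are $(0,t/2,0,-t/2)$. Exactly the cosets $H$ and $g^2H$ are centered, which is exactly two of the four. This proves both assertions.
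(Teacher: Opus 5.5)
Your proposal is correct and follows essentially the paper's own proof: you get $A_{pq}(\psi)\neq 0$ from Pomerance--Ulmer's Lemma~2.2 via the half-interval relation after Theorem~\ref{thm:character-criterion}, then read off the coset pattern $(0,t/2,0,-t/2)$ from Proposition~\ref{prop:index4-coset-pattern}. Your supplementary check through Proposition~\ref{prop:class-number-obstruction} (positivity of $h^-(F)=2(x^2+y^2)$) is also valid and gives an independent route to $A_{pq}(\psi)\neq0$, and you are right that no cyclicity of $\ker\psi$ is needed.
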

    \begin{proof}
    By Pomerance--Ulmer~\cite[Lemma~2.2]{PomeranceUlmer}, the half-interval
    sum of the primitive odd character $\psi$ is nonzero. The relation
    following Theorem~\ref{thm:character-criterion} has nonzero multiplier
    $\psi(2)^{-1}-2$, so $A_{pq}(\psi)\ne0$. Proposition~\ref{prop:index4-coset-pattern}
    then gives the claim: a centered $H$ has exactly two centered cosets.
    \end{proof}

	\begin{table}[htbp]
		\centering
		\begin{tabular}{l|r|r|r}
			Local quartic type & Every coset centered & Not every coset centered & Total\\ \hline
			Imprimitive $(4,1),(1,4)$ & $143$ & $64$ & $207$\\
			Primitive $(4,2),(2,4)$ & $0$ & $28$ & $28$\\
			Primitive $(4,4)$ & $0$ & $9$ & $9$\\ \hline
			All & $143$ & $101$ & $244$
		\end{tabular}
		\caption{Mixed sum-balanced cyclic subgroups of index four,
			for $p<q$ odd primes and $pq\le10000$, distinguished by whether
			\emph{all} cosets have centered first moment. The second category
			means that $H$ is centered but at least one coset is not.}
		\label{tab:coset-counts}
	\end{table}
	
	\begin{example}[Explicit four-coset discrepancy]\label{ex:coset85}
		Let $H=\langle14\rangle$ modulo $85=5\cdot17$.
		Then $|H|=16$, $[G:H]=4$, and $2H$ generates $G/H$.
		Direct summation over the four cosets gives
		\[
		\begin{array}{c|rrrr}
		\text{Coset}&H&2H&4H&8H\\ \hline
		\text{Sum of residues}&680&765&680&595
		\end{array}
		\]
		Since $85\cdot16/2=680$, the subgroup itself is sum-balanced,
		but $2H$ and $8H$ are not. The deviations $(0,85,0,-85)$
		exhibit the pattern of Proposition~\ref{prop:index4-coset-pattern}.
	\end{example}

\section{Concluding remarks}
Complementary pairing settles the subgroups containing $-1$.
For mixed cyclic subgroups, the embedding in the two local unit groups
controls whether the odd-character moments vanish or cancel.
At index two this reduces to the known quadratic-residuacity criterion;
at index four the biquadratic case has a strict deficit.
Theorem~\ref{thm:all-imprimitive-quartic} classifies every eligible
imprimitive quartic kernel through two local odd integers and gives
the fixed-conductor proportions $1/4$ and $1/2$. The conductor-$13$
case supplies explicit progressions with subgroup balance but without
full coset balance.

For primitive quartic kernels, the triangular specialization and the
short-sum criteria give exact tests, while the classical relative
class-number formula yields the obstruction
$h^-(F)=2b^2$ whenever a coset is centered. These results leave open
a general arithmetic classification of primitive quartic vanishing,
including when
\[
 \Ree\bigl(\chi_p(-q)T_{p,q}(\chi_p,\eps_q)\bigr)=0.
\]
The short sums at $p=5$, the three-term test at $p=13$, and the
complex sums of type $(4,4)$ give specific cases for further study.
For the imprimitive classification, a separate local question is which
prime conductors satisfy $|u|=|v|$.

At index six, the structural enumeration identifies the possible cyclic
kernels before their sums are tested. Theorem~\ref{thm:sextic19family}
gives infinitely many imprimitive examples in which nonzero quadratic
and sextic contributions cancel, with four of the six cosets centered.
Infinitude for primitive sextic annihilators remains open here.
Finally, the census distinguishes the $143$ mixed balanced index-four
subgroups with every coset centered from the $101$ with exactly two
centered cosets, within $pq\le10000$. Its complete data and exact
verification programs accompany the manuscript.

\section*{Statements and Declarations}
\subsection*{Funding}
The authors received no funding for this work.

\subsection*{Competing interests}
The authors declare no competing interests.

\subsection*{Data availability}
The data and code supporting the computational results are supplied in
\nolinkurl{data-package-v12/} and the accompanying
\nolinkurl{residue-sums-pq-data-v12.zip}; the canonical public repository
is \url{https://github.com/the-omega-institute/residue-sums-pq-data}.
The package contains the complete subgroup census, verification scripts,
exact outputs, the formal-source provenance record, reproduction
instructions, and SHA-256 checksums. The data are licensed under CC BY 4.0
and the code under the MIT licence.

	\subsection*{Author contributions}
	
	H.~Ma and W.~Zhang initiated the collaboration by providing numerical
	computations that revealed counterexamples to M.~Merca's original
	conjecture and by suggesting a systematic investigation of the cases
	in which the conjectured equality remains valid.
	Building on these findings, M.~Merca obtained initial classification
	results, proposed the first structure of the manuscript, and prepared
	its first draft. Subsequent mathematical developments and revisions
	involved contributions from all three authors.
    H.~Ma and W.~Zhang also contributed the Lean~4 formalizations
    of the introductory counterexample and lifted prime-modulus identity, and the exact census and
    verification code.

	\subsection*{Use of AI tools}
	The contributions of H.~Ma and W.~Zhang were AI-assisted: OpenAI's ChatGPT
	and Codex and Anthropic's Claude were used for literature search,
	computations and checking proofs. All proofs in the paper can be checked
	by hand, and the computations can be reproduced from the data package.
	H.~Ma and W.~Zhang take full responsibility for their contributions.
	
	\par\smallskip
    \noindent M.~Merca used ChatGPT (OpenAI) to assist with English-language proofreading, stylistic editing, 	and improving the clarity of the mathematical 	exposition. All AI-assisted editorial revisions were carefully reviewed by the author, who 	retains full responsibility for his contributions.

\end{document}